\renewcommand{\p@enumii}{}
\def\@enum@{\list{\csname label\@enumctr\endcsname}%
          {\usecounter{\@enumctr}\def\makelabel##1{
\normalfont\ignorespaces\emph{{##1}~}}
\setlength{\labelsep}{3pt}
\setlength{\parsep}{0pt}
\setlength{\itemsep}{0pt}
\setlength{\leftmargin}{0pt}
\setlength{\labelwidth}{0pt}
\setlength{\listparindent}{\parindent}
\setlength{\itemsep}{0pt}
\setlength{\itemindent}{0pt}
\topsep=3pt plus 1pt minus 1 pt}}
\renewcommand{\epsilon}{\ensuremath{\varepsilon}}
\renewcommand{\phi}{\ensuremath{\varphi}}
\renewcommand{\to}{\ensuremath{\longrightarrow}}
\renewcommand{\mapsto}{\ensuremath{\longmapsto}}
\newcommand{\R}{\ensuremath{\mathbb R}}
\newcommand{\N}{\ensuremath{\mathbb N}}
\newcommand{\Z}{\ensuremath{\mathbb Z}}
\newcommand{\St}[1][2]{\ensuremath{\mathbb S}^{#1}}
\newcommand{\sn}[1][n]{\ensuremath{S_{{#1}}}}
\newcommand{\an}[1][n]{\ensuremath{A_{{#1}}}}
\DeclareRobustCommand*{\up}[1]{\textsuperscript{#1}}
\renewcommand{\ker}[1]{\ensuremath{\operatorname{\text{Ker}}\left({#1}\right)}}
\newcommand{\im}[1]{\ensuremath{\operatorname{\text{Im}}\left({#1}\right)}}
\newcommand{\id}{\ensuremath{\operatorname{\text{Id}}}}
\newcommand{\lhra}{\lhook\joinrel\longrightarrow}
\def\@map#1#2[#3]{\mbox{$#1 \colon\thinspace #2 \to #3$}}
\def\map#1#2{\@ifnextchar [{\@map{#1}{#2}}{\@map{#1}{#2}[#2]}}
\newcommand{\brak}[1]{\ensuremath{\left\{ #1 \right\}}}
\newcommand{\ang}[1]{\ensuremath{\left\langle #1\right\rangle}}
\newcommand{\setangr}[2]{\ensuremath{\ang{#1 \,\left\lvert \, #2 \right.}}}
\newcommand{\setr}[2]{\ensuremath{\brak{#1 \,\left\lvert \, #2 \right.}}}
\newcommand{\setl}[2]{\ensuremath{\brak{\left. #1 \,\right\rvert \, #2}}}
\newtheoremstyle{theoremm}{}{}{\itshape}{}{\scshape}{.}{ }{}
\theoremstyle{theoremm}
\newtheorem{thm}{Theorem}
\newtheorem{lem}[thm]{Lemma}
\newtheorem{prop}[thm]{Proposition}
\newtheorem{cor}[thm]{Corollary}
\newtheoremstyle{reptheorem}{}{}{\itshape}{}{\scshape}{}{ }{\thmname{#1}\mathrm{#3}}
\theoremstyle{reptheorem}
\newtheoremstyle{remark}{}{}{}{}{\scshape}{.}{ }{}
\theoremstyle{remark}
\newtheorem*{defn}{Definition}
\newtheorem{rem}[thm]{Remark}
\newtheorem{rems}[thm]{Remarks}
\newtheoremstyle{comment}{}{}{\bfseries}{}{\bfseries}{:}{ }{}
\theoremstyle{comment}
\newcommand{\reth}[1]{Theorem~\protect\ref{th:#1}}
\newcommand{\relem}[1]{Lemma~\protect\ref{lem:#1}}
\newcommand{\repr}[1]{Proposition~\protect\ref{prop:#1}}
\newcommand{\reco}[1]{Corollary~\protect\ref{cor:#1}}
\newcommand{\resec}[1]{Section~\protect\ref{sec:#1}}
\newcommand{\rerem}[1]{Remark~\protect\ref{rem:#1}}
\newcommand{\rerems}[1]{Remarks~\protect\ref{rem:#1}}
\newcommand{\req}[1]{equation~(\protect\ref{eq:#1})}
\newcommand{\reqref}[1]{(\protect\ref{eq:#1})}
\begin{document}

\title{Quotients of the Artin braid groups\\ and crystallographic groups} 

\author{DACIBERG~LIMA~GON\c{C}ALVES\\
Departamento de Matem\'atica - IME-USP,\\
Caixa Postal~66281~-~Ag.~Cidade de S\~ao Paulo,\\ 
CEP:~05314-970 - S\~ao Paulo - SP - Brazil.\\
e-mail:~\url{dlgoncal@ime.usp.br}\vspace*{4mm}\\
JOHN~GUASCHI\\
Normandie Universit\'e, UNICAEN,\\
Laboratoire de Math\'ematiques Nicolas Oresme UMR CNRS~\textup{6139},\\
14032 Caen Cedex, France.\\
e-mail:~\url{john.guaschi@unicaen.fr}\vspace*{4mm}\\
OSCAR~OCAMPO~\\
Departamento de Matem\'atica - Instituto de Matem\'atica,\\
Universidade Federal da Bahia,\\
CEP:~40170-110 - Salvador - Ba - Brazil.\\
e-mail:~\url{oscaro@ufba.br}
}


\maketitle

\begin{abstract}
\noindent
\emph{Let $n\geq 3$. In this paper, we study the quotient group $B_n/[P_n,P_n]$ of the Artin braid group $B_{n}$ by the commutator subgroup of its pure Artin braid group $P_{n}$. We show that $B_n/[P_n,P_n]$ is a crystallographic group, and in the case $n=3$, we analyse explicitly some of its subgroups. We also prove that $B_n/[P_n,P_n]$ possesses torsion, and we show that there is a one-to-one correspondence between the conjugacy classes of the finite-order elements of $B_n/[P_n,P_n]$ with the conjugacy classes of the elements of odd order of the symmetric group $\sn$, and that the isomorphism class of any Abelian subgroup of odd order of $\sn$ is realised by a subgroup of $B_n/[P_n,P_n]$. Finally, we discuss the realisation of non-Abelian subgroups of $\sn$ of odd order as subgroups of $B_n/[P_n,P_n]$, and we show that the Frobenius group of order $21$, which is the smallest non-Abelian group of odd order, embeds in $B_n/[P_n,P_n]$ for all $n\geq 7$.}
 \end{abstract}

\section{Introduction}\label{sec:intro}

Let $n\in \N$. Quotients of the Artin braid group $B_{n}$ has been studied in various contexts, and may be used to study properties of $B_{n}$ itself. It is well known that one such quotient is the symmetric group $\sn$, which may be expressed in the form $B_{n}/\ang{\sigma_{1}^{2}}^{B_{n}}$, where $\sigma_{1},\ldots, \sigma_{n-1}$ are the standard generators of $B_{n}$ (see \resec{SecCG}), and $\ang{X}^{B_{n}}$ denotes the normal closure subgroup of a subset $X$ of $B_{n}$. Similar quotients of the form $B_{n}/\ang{\sigma_{1}^{m}}^{B_{n}}$, where $m\in \N$, were analysed by Coxeter in~\cite{Coxeter}, who showed that this quotient is finite if and only if $(n,m)\in \brak{(3,3),(3,4), (3,5), (4,3), (5,3)}$, and computed the quotient groups in each case, and by Marin in~\cite{Marin} in the case $(n,m)=(5,3)$ with the aim of studying cubic Hecke algebras. The Brunnian braid groups $\operatorname{Brun}_n$ have been studied in connection with homotopy groups of the $2$-sphere $\St$~\cite{BCWW,LW1,OcampoPhD} by considering quotients of $B_{n}$. For example, for all $n\geq 3$, there exists a subgroup $G_{n}$ of $\operatorname{Brun}_n$ that is normal in the Artin pure braid group $P_{n}$ such that the centre of $P_{n}/G_{n}$ is isomorphic to the direct product $\pi_{n}(\St)\times \Z$ (see~\cite[Theorem~1]{LW1} and~\cite[Theorem~4.3.4]{OcampoPhD}). 

In this paper, we study the quotient $B_n/[P_n,P_n]$ of $B_{n}$ for $n\geq 3$, where $[P_n,P_n]$ is the commutator subgroup of $P_{n}$. Our initial motivation emanates from the observation that $B_{3}/[P_3, P_3]$ is isomorphic to $B_3/\operatorname{Brun}_3$ (see~\cite[Corollary~2.1.4]{OcampoPhD} as well as~\cite[Section~5.2]{OcampoPhD} for other results about $B_3/\operatorname{Brun}_3$, and~\cite[Proposition~3.9]{LW1} and~\cite[Proposition~4.3.10(1)]{OcampoPhD} for a presentation of $B_3/\operatorname{Brun}_3$). The quotient $B_n/[P_n,P_n]$ belongs to a family of groups known as \emph{enhanced symmetric groups} (see~\cite[page~201]{Marin2}) and analysed in~\cite{Tits}. It also arises in the study of pseudo-symmetric braided categories by Panaite and Staic. They consider the quotient, denoted by $PS_n$, of $B_n$ by the normal subgroup generated by the relations $\sigma_i\sigma_{i+1}^{-1}\sigma_i=\sigma_{i+1}\sigma_i^{-1}\sigma_{i+1}$ for $i=1,2,\ldots,n-2$, and they show that it is isomorphic to $B_n/[P_n,P_n]$~\cite{PS}. The results that we obtain in this paper for $B_n/[P_n, P_n]$ are different in nature to those of~\cite{PS}, with the exception of some basic properties.

Crystallographic groups play an important r\^ole in the study of the groups of isometries of Euclidean spaces (see \resec{crystal} for precise definitions, as well as~\cite{Charlap,Dekimpe,Wolf} for more details). As we shall prove in \repr{cryst}, another reason for studying the quotient $B_n/[P_n,P_n]$ is the fact that it is a crystallographic group:
\begin{prop}\label{prop:cryst}
Let $n\geq 2$. There is a short exact sequence:
\begin{equation*}
1 \to \Z^{n(n-1)/2} \to  B_n/[P_n,P_n] \stackrel{\overline{\sigma}}{\to} \sn \to 1,
\end{equation*}
and the middle group $B_n/[P_n,P_n]$ is a crystallographic group.
\end{prop}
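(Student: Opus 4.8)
The plan is to establish the short exact sequence first, and then verify the three defining properties of a crystallographic group: that the kernel is a free abelian group of finite rank (the translation lattice), that the quotient is finite (the holonomy group), and — the crucial point — that the extension is faithful, i.e. the conjugation action of $\sn$ on $\Z^{n(n-1)/2}$ has trivial kernel.

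For the short exact sequence, I would start from the classical Fadell--Neuwirth-type short exact sequence $1 \to P_n \to B_n \xrightarrow{\pi} \sn \to 1$, where $\pi$ is the standard permutation homomorphism sending $\sigma_i$ to the transposition $(i,i+1)$. Since $[P_n,P_n]$ is a characteristic subgroup of $P_n$ and $P_n$ is normal in $B_n$, it follows that $[P_n,P_n]$ is normal in $B_n$, so the quotient $B_n/[P_n,P_n]$ makes sense and $\pi$ descends to a surjection $\overline{\sigma}\colon B_n/[P_n,P_n] \to \sn$ whose kernel is $P_n/[P_n,P_n] = P_n^{\mathrm{ab}}$. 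It is a standard fact (going back to the pure braid relations, or the fact that $P_n$ is an iterated semidirect product of free groups) that $P_n^{\mathrm{ab}} \cong \Z^{n(n-1)/2}$, with a basis given by the classes of the standard generators $A_{i,j}$ ($1\le i<j\le n$). This gives the displayed sequence, and simultaneously shows the kernel is free abelian of the asserted finite rank and the quotient $\sn$ is finite.

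It then remains to check that $B_n/[P_n,P_n]$ is torsion-free as an abstract group would be too strong — what is actually required for a crystallographic group is that the holonomy representation $\sn \to \aut{\Z^{n(n-1)/2}} = GL(n(n-1)/2,\Z)$ is \emph{injective}. Here the action is the one induced by conjugation in $B_n$: a permutation $\tau \in \sn$, lifted to some $\beta \in B_n$, acts on the class of $A_{i,j}$ by conjugation, and one checks from the known conjugation formulas for the $A_{i,j}$ in $P_n$ (e.g. $\sigma_k A_{i,j}\sigma_k^{-1}$ expressed in terms of the $A_{r,s}$) that modulo $[P_n,P_n]$ this conjugation simply permutes the basis elements: $\tau \cdot \overline{A_{i,j}} = \overline{A_{\tau(i),\tau(j)}}$. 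In other words, the holonomy representation is precisely the natural permutation action of $\sn$ on the $\binom{n}{2}$ unordered pairs $\{i,j\}\subset\{1,\dots,n\}$. The faithfulness of this action for $n\ge 3$ is elementary: if $\tau$ fixes every pair $\{i,j\}$ setwise, then for $n\ge 3$ one deduces $\tau = \id$ (given distinct $i,j,k$, the pairs $\{i,j\}$ and $\{i,k\}$ are both preserved, forcing $\tau(i)=i$). This is where the hypothesis $n\ge 2$ in the statement is essentially used — and note that for $n=2$ the group is just $\Z$, which is (degenerately) crystallographic of dimension $1$, so the statement still holds, though the interesting range is $n\ge 3$.

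The main obstacle, and the step deserving the most care, is the explicit identification of the conjugation action on $P_n^{\mathrm{ab}}$ as the permutation action on pairs. This requires invoking the precise form of the conjugates $\sigma_k^{\pm 1} A_{i,j} \sigma_k^{\mp 1}$ in $P_n$ (these are products of $A_{r,s}$'s and their inverses and conjugates), and observing that after abelianising $P_n$ all the ``correction terms'' that are conjugates of generators collapse to the generators themselves, leaving only a single $A_{\tau(i),\tau(j)}$ with exponent $+1$; one should check the handful of cases according to how $\{i,j\}$ meets $\{k,k+1\}$. Once this is in hand, the crystallographic conditions follow immediately, since a group $G$ fitting in $1\to\Z^d\to G\to Q\to 1$ with $Q$ finite acting faithfully on $\Z^d$ is by definition a crystallographic group of dimension $d$.
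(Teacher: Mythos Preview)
Your proposal is correct and essentially complete, but it takes a genuinely different route from the paper's own proof of the faithfulness of the holonomy representation. You identify the conjugation action on $P_n^{\mathrm{ab}}$ explicitly as the permutation action of $\sn$ on the set of unordered pairs $\{i,j\}$, and then give a direct two-line argument that this action is faithful for $n\geq 3$ (intersecting two pairs with a common element forces that element to be fixed). The paper, by contrast, does not name the representation in this way: it computes from the conjugation formulas that the automorphisms induced by $\sigma_1$ and $\sigma_2$ are distinct and non-trivial, deduces that $\lvert\im{\phi}\rvert\geq 3$, and then invokes the classification of normal subgroups of $\sn$ to conclude that $\ker{\phi}$ is trivial, treating $n=4$ separately because of the Klein four-group. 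Your argument is more elementary and more uniform (no special case at $n=4$, no appeal to the normal-subgroup structure of $\sn$), and it has the additional payoff of making the holonomy representation completely transparent. The paper's argument, on the other hand, requires slightly less bookkeeping with the conjugation formulas, since it only needs the images of a couple of specific generators rather than the full verification that every $\sigma_k$ acts by the transposition $(k,k+1)$ on indices.
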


The aim of this paper is to analyse $B_n/[P_n,P_n]$ in more detail, notably its torsion, the conjugacy classes of its finite-order elements, and the realisation of abstract finite groups as subgroups of $B_n/[P_n,P_n]$. Since $B_{1}$ is trivial and $B_{2}$ is isomorphic to $\Z$, in what follows we shall suppose that $n\geq 3$. In \resec{crystal}, we recall the basic definitions and some results about crystallographic and Bieberbach groups. In \resec{SecCG}, we recall some standard information about $B_{n}$ and $P_{n}$, and using the fact that the quotient $B_{n}/P_{n}$ is isomorphic to $\sn$, we shall see that $B_{n}/[P_{n}, P_{n}]$ is an extension of the free Abelian group $P_{n}/[P_{n}, P_{n}]$ by $\sn$, and we shall compute the associated action, which will enable us to prove it is crystallographic. By analysing the action in more detail, we prove that the torsion of $B_n/[P_n, P_n]$ is odd:
\begin{thm}\label{th:2TorCryst}
If $n\geq 3$ then the quotient group $B_n/[P_n, P_n]$ has no finite-order element of even order. 
\end{thm}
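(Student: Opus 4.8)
The plan is to exploit the short exact sequence
\[
1 \to P_n/[P_n,P_n] \to B_n/[P_n,P_n] \stackrel{\overline{\sigma}}{\to} \sn \to 1
\]
of \repr{cryst}, together with the explicit description of the conjugation action of $\sn$ on the free Abelian group $L := P_n/[P_n,P_n] \cong \Z^{n(n-1)/2}$ that (according to the excerpt) will have been computed in \resec{SecCG}. Recall that $L$ has a natural basis indexed by the pairs $\{i,j\}$ with $1 \le i < j \le n$, coming from the images of the standard generators $A_{i,j}$ of $P_n$, and that $\sn$ permutes these basis elements exactly as it permutes the $2$-element subsets of $\{1,\dots,n\}$. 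So as a $\Z[\sn]$-module, $L$ is the permutation module $\Z[\Omega]$ on the set $\Omega$ of unordered pairs. Suppose for contradiction that $B_n/[P_n,P_n]$ has an element $g$ of order $2^k m$ with $k \ge 1$; replacing $g$ by a power we may assume $g$ has order $2$. Since $L$ is torsion-free, $g \notin L$, so $\rho := \overline{\sigma}(g)$ is a nontrivial element of $\sn$, necessarily of order $2$.

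Next I would use the standard fact about crystallographic groups (recalled in \resec{crystal}): a finite-order element $g$ mapping to $\rho \in \sn$ exists if and only if the cohomology class in $H^2(\sn; L)$ of the extension restricts trivially to $\ang{\rho}$, equivalently if and only if the $1$-cocycle measuring the obstruction is a coboundary over $\ang{\rho}$; concretely, writing $g = (v, \rho)$ with $v \in L$ in a suitable set-theoretic splitting, the condition $g^2 = 1$ becomes $v + \rho\cdot v = c(\rho)$ for the appropriate $2$-cocycle value $c(\rho) \in L$, which has a solution $v \in L$ precisely when $c(\rho)$ lies in the image of $(1+\rho)$ on $L$. The heart of the argument is therefore to show that for every involution $\rho \in \sn$ this equation has \emph{no} solution in the integral module $L = \Z[\Omega]$.

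The key computation is to pin down $c(\rho)$ and the image of $1+\rho$ on $\Z[\Omega]$. An involution $\rho$ is a product of $t \ge 1$ disjoint transpositions; on $\Omega$ it acts as a permutation, and $1+\rho$ on a $\Z[\ang{\rho}]$-permutation module is well understood: on a free orbit $\{x, \rho x\}$ the image of $1+\rho$ is $\Z(x+\rho x)$, so the cokernel contributes a $\Z/2$ in the $x+\rho x$ direction, while on a fixed point $x$ the map $1+\rho$ is multiplication by $2$, again contributing a $\Z/2$. Thus an element $w \in \Z[\Omega]$ lies in $\im(1+\rho)$ iff, for every fixed basis vector, the corresponding coordinate of $w$ is even, and for every free orbit $\{x,\rho x\}$ the two coordinates of $w$ are equal. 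It then remains to compute the relevant cocycle value $c(\rho)$ arising from lifting $\rho$ to $B_n/[P_n,P_n]$ — e.g. lift the transposition $(i\ i{+}1)$ to $\overline{\sigma_i}$, note $\overline{\sigma_i}^{\,2} = \overline{A_{i,i+1}}$ is a nonzero basis element of $L$, and more generally a lift of a product of $t$ disjoint transpositions squares to a product of $t$ distinct basis elements of $L$ (one fixed basis vector for each transposition of $\rho$) — and check this $c(\rho)$ fails the parity/equality criterion above, since it has a coordinate equal to $1$ in a direction that the criterion forces to be even. The same obstruction then applies to an arbitrary even-order element after passing to a power, giving \reth{2TorCryst}.

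The main obstacle I anticipate is the bookkeeping for a general involution $\rho$ (not just a single transposition): one must identify exactly which $2$-subsets are fixed by $\rho$ and which form free $2$-orbits, and then verify that the square of \emph{every} lift of $\rho$ — not merely the obvious one built from the $\sigma_i$ — lands outside $\im(1+\rho)$. This is handled by noting that any two lifts of $\rho$ differ by an element of $L$, so their squares differ by an element of $\im(1+\rho)$; hence it suffices to check the obvious lift, which reduces everything to the explicit module computation above. A secondary point needing care is simply citing the correct cohomological criterion for the existence of finite-order lifts in a group extension with torsion-free kernel, which is standard and presumably available from the material of \resec{crystal}.
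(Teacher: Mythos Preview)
Your proposal is correct and is essentially the same argument as the paper's, just dressed in cohomological language: the paper also reduces to an element of order~$2$, conjugates so that $\rho=(1,2)(3,4)\cdots(k,k+1)$, takes the obvious lift $\alpha=\sigma_1\sigma_3\cdots\sigma_k$, and observes directly that for any $N\in L$ the coefficient of $A_{1,2}$ in $(N\alpha)^2 = N + \alpha N\alpha^{-1} + \alpha^2$ equals $2r+1$, which is exactly your observation that $\alpha^2$ has an odd coefficient at a $\rho$-fixed basis vector and hence lies outside $\im{1+\rho}$. The paper simply never names $\im{1+\rho}$ or invokes a cohomological criterion, working entirely with the single coordinate $A_{1,2}$.
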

By restricting the short exact sequence involving $B_{n}/[P_{n}, P_{n}]$, $P_{n}/[P_{n}, P_{n}]$ and $\sn$ to $2$-subgroups of the latter (see \req{sesholo}), we are able to construct Bieberbach groups of dimension $n(n-1)/2$ (which is the rank of $P_{n}/[P_{n}, P_{n}]$), and show that there exist flat manifolds of the same dimension whose holonomy group is the given $2$-subgroup (see \reth{TeoAKBraids}).

In \resec{SecBCG}, we analyse the torsion of $B_n/[P_n, P_n]$ in more detail. In order to do so, we shall make use of the induced action of certain elements $\alpha_{0,r}$ of $B_n/[P_n, P_n]$, where $2\leq r\leq n$, on the basis $(A_{i,j})_{1\leq i<j\leq n}$ of $P_n/[P_n, P_n]$. The structure of the corresponding orbits is very rigid, and allows us to express the existence of elements of $B_n/[P_n, P_n]$ of order $n$ in terms of the existence of solutions of a certain linear system. It will follow from this that $B_n/[P_n, P_n]$ has infinitely many elements of order $n$ (see \repr{Torcaonimpar}). We then show that if $1\leq n\leq m$, the standard injective homomorphism of $B_{n}$ in $B_{m}$ induces a injective homomorphism of $B_n/[P_n, P_n]$ in $B_m/[P_m, P_m]$:
\begin{thm}\label{th:oddtorsion}
Let  $m$ and $n$ be integers such that $2\leq n\leq m$. 
\begin{enumerate}
\item\label{it:oddtorsiona} Consider the injective homomorphism $\map{\iota}{B_{n}}[B_m]$ defined by $\iota(\sigma_i)=  \sigma_i$ for all $1\leq i\leq n-1$. Then the induced homomorphism $\map{\overline{\iota}}{B_n/[P_n, P_n]}[B_m/[P_m, P_m]]$ of the corresponding quotient groups is injective. 
\item\label{it:oddtorsionb} If  $n\geq 3$ and $n$ is odd then  $B_m/[P_m, P_m]$ possesses elements of order $n$.
Further, there exists such an element whose permutation is an $n$-cycle.
\item\label{it:oddtorsionc} Let $n_1,n_2,\ldots,n_t$ be odd integers greater than or equal to $3$ for which $\sum_{i=1}^{t} \, n_i\leq m$. Then $B_m/[P_m, P_m]$ possesses elements of order $\operatorname{lcm}(n_1,\ldots,n_{t})$. Further, there exists such an element whose cycle type is $(n_1, \ldots, n_t)$.
\end{enumerate}
\end{thm}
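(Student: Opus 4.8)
The three parts are naturally nested: part~\ref{it:oddtorsionb} will follow from \repr{Torcaonimpar} together with part~\ref{it:oddtorsiona}, and part~\ref{it:oddtorsionc} will follow from part~\ref{it:oddtorsionb} applied blockwise plus part~\ref{it:oddtorsiona}. So the heart of the matter is part~\ref{it:oddtorsiona}: the standard inclusion $\iota\colon B_n\hookrightarrow B_m$, which on strands simply ignores the last $m-n$ strands, descends to an \emph{injective} map on the quotients $B_n/[P_n,P_n]\to B_m/[P_m,P_m]$. The first thing I would check is that $\overline{\iota}$ is well defined, i.e.\ that $\iota([P_n,P_n])\subseteq [P_m,P_m]$; this is immediate since $\iota(P_n)\subseteq P_m$ (the inclusion is compatible with the projections to the symmetric groups), so $\iota$ carries commutators of $P_n$ to commutators of $P_m$.

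For injectivity I would work with the commutative diagram of short exact sequences: the top row is the sequence of \repr{cryst} for $n$, the bottom row the one for $m$, the left vertical arrow is the map $P_n/[P_n,P_n]\to P_m/[P_m,P_m]$ induced by $\iota$, the middle arrow is $\overline{\iota}$, and the right arrow is the standard inclusion $\sn\hookrightarrow S_m$ (which is obviously injective). The classical generators of $P_n/[P_n,P_n]$ are the images $A_{i,j}$ of the standard pure-braid generators $A_{i,j}$ with $1\le i<j\le n$; these form a free $\Z$-basis, and $\iota$ sends $A_{i,j}$ to the corresponding $A_{i,j}$ with the \emph{same} indices inside the basis $(A_{k,l})_{1\le k<l\le m}$ of $P_m/[P_m,P_m]$ — a sub-basis — so the left vertical arrow is a split injection of free abelian groups. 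A diagram chase then finishes the job: if $x\in B_n/[P_n,P_n]$ lies in $\ker(\overline{\iota})$, then its image in $\sn$ lies in the kernel of the injection $\sn\hookrightarrow S_m$, hence is trivial, so $x$ comes from $P_n/[P_n,P_n]$; but the restriction of $\overline{\iota}$ to $P_n/[P_n,P_n]$ is the left vertical arrow, which is injective, so $x=0$. The one point that needs genuine care — and is the main obstacle — is verifying that $\iota$ really does induce that split injection on the abelianised pure braid groups with the indexing as stated; concretely, one must know that under the Burau/homological description (or directly from the Artin presentation of $P_n$ and $P_m$) the abelianisation map kills exactly the commutators and that the standard generators $A_{i,j}$ ($i<j\le n$) remain $\Z$-linearly independent inside $P_m/[P_m,P_m]$. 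Since $P_n/[P_n,P_n]\cong\Z^{n(n-1)/2}$ with the $A_{i,j}$ as basis is exactly the computation underlying \repr{cryst}, and the inclusion of index sets $\{(i,j):i<j\le n\}\subseteq\{(k,l):k<l\le m\}$ is the obvious one, this reduces to a bookkeeping check on the presentations.

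For part~\ref{it:oddtorsionb}, \repr{Torcaonimpar} already provides, for odd $n\ge 3$, an element of $B_n/[P_n,P_n]$ of order $n$; one checks from its construction (it is built from the $\alpha_{0,r}$, whose permutation is an $n$-cycle) that its image in $\sn$ is an $n$-cycle. Applying the injective homomorphism $\overline{\iota}$ of part~\ref{it:oddtorsiona} transports this element into $B_m/[P_m,P_m]$ without changing its order or its cycle type (the right square of the diagram commutes with the permutation map, and an $n$-cycle on the first $n$ symbols stays an $n$-cycle in $S_m$). Finally, for part~\ref{it:oddtorsionc}: write $m\ge\sum_{i=1}^t n_i$ and split the first $\sum n_i$ strands into consecutive blocks of sizes $n_1,\dots,n_t$. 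By part~\ref{it:oddtorsionb} each block $B_{n_i}/[P_{n_i},P_{n_i}]$ contains an element $x_i$ of order $n_i$ whose permutation is an $n_i$-cycle; since the blocks involve disjoint sets of strands, the corresponding lifts in $B_m/[P_m,P_m]$ (obtained via suitably shifted copies of the inclusions $\iota$) commute with one another and their permutations act on disjoint supports, so the product $x_1x_2\cdots x_t$ has order $\operatorname{lcm}(n_1,\dots,n_t)$ and cycle type $(n_1,\dots,n_t)$. The only subtlety here is that ``disjoint blocks of strands give commuting images'': this is clear in $B_m$ (braids on disjoint strand-sets commute) and hence a fortiori in the quotient, and it guarantees that the order of the product is the lcm rather than merely a divisor of it, because the permutations already force the order to be a multiple of each $n_i$.
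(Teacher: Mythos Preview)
Your argument is correct and, for parts~(\ref{it:oddtorsiona}) and~(\ref{it:oddtorsionb}), essentially identical to the paper's: the same commutative diagram of short exact sequences, injectivity on the abelianised pure braid groups coming from the inclusion of bases $\{A_{i,j}:i<j\le n\}\subseteq\{A_{k,l}:k<l\le m\}$, and then a diagram chase (the paper just invokes the $5$-Lemma); part~(\ref{it:oddtorsionb}) is then \repr{Torcaonimpar} pushed forward by~$\overline{\iota}$.

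For part~(\ref{it:oddtorsionc}) your route differs slightly from the paper's and is a little more elementary. You place the order-$n_i$ elements in disjoint blocks of strands inside $B_m/[P_m,P_m]$, observe that they commute (disjoint supports in $B_m$), so the product has order dividing $\operatorname{lcm}(n_1,\dots,n_t)$, and then use the permutation map to force the order to be \emph{exactly} the lcm. The paper instead proves the stronger intermediate statement that the direct product $\prod_{i=1}^{t} B_{n_i}/[P_{n_i},P_{n_i}]$ embeds in $B_m/[P_m,P_m]$: it factors through the mixed braid group $B_{n_1,\dots,n_t}$ and constructs explicit retractions $\overline{\psi_i}$ onto each factor to certify injectivity of the product map, and only then reads off the order of the image of $(\gamma_1,\dots,\gamma_t)$. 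Your argument suffices for the theorem as stated; the paper's detour buys the embedding of the full direct product, which is mentioned separately in the introduction and is in the same spirit as the shifted-block injectivity used later (e.g.\ in \relem{0}).
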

Part~(\ref{it:oddtorsionb}) follows from part~(\ref{it:oddtorsiona}) and \repr{Torcaonimpar}. In the course of the proof of \reth{oddtorsion}, we shall see that the direct product of the groups of the form $B_{n_{i}}/[P_{n_{i}}, P_{n_{i}}]$ injects into $B_m/[P_m, P_m]$, which will enable us to prove part~(\ref{it:oddtorsionc}). One consequence of Theorems~\ref{th:2TorCryst} and~\ref{th:oddtorsion} is the characterisation of the torsion of $B_n/[P_n,P_n]$ as that of the odd torsion of the symmetric group $\sn$:
\begin{cor}\label{cor:torsion}
Let $n\geq 3$. The torsion of the quotient $B_n/[P_n, P_n]$ is equal to the odd torsion of the symmetric group $\sn$. Moreover, given an element $\theta\in \sn$ of odd order $r$, there exists $\beta\in B_n/[P_n, P_n]$ of order $r$ such that $\overline{\sigma}(\beta)=\theta$. So given any cyclic subgroup $H$ of $\sn$ of odd order $r$, there exists a finite-order subgroup $\widetilde{H}$ of $B_n/[P_n, P_n]$ such that $\overline{\sigma}(\widetilde{H})=H$.
\end{cor}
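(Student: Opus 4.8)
The plan is to deduce the corollary from \reth{2TorCryst}, \reth{oddtorsion} and the fact, recorded in \repr{cryst}, that $\ker{\overline{\sigma}}$ is the free Abelian group $P_n/[P_n,P_n]$ of rank $n(n-1)/2$, and in particular is torsion free. The first step is to show that $\overline{\sigma}$ preserves the order of every finite-order element of $B_n/[P_n,P_n]$: if $\beta$ has finite order $k$ and $\overline{\sigma}(\beta)$ has order $d$, then $d$ divides $k$, the element $\beta^{d}$ lies in $\ker{\overline{\sigma}}$, and $(\beta^{d})^{k/d}=1$; torsion freeness of $\ker{\overline{\sigma}}$ then forces $\beta^{d}=1$, so $k$ divides $d$ and hence $k=d$. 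Combined with \reth{2TorCryst}, this shows that every finite-order element $\beta$ of $B_n/[P_n,P_n]$ has odd order, equal to the order of $\overline{\sigma}(\beta)\in\sn$, so that every order realised by a torsion element of $B_n/[P_n,P_n]$ occurs as the order of an odd-order element of $\sn$.

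For the converse realisation, let $\theta\in\sn$ have odd order $r$. If $\theta$ is the identity we take $\beta=1$; otherwise, since $r$ is odd, all the cycle lengths of $\theta$ are odd, so its non-trivial cycles have lengths $n_{1},\ldots,n_{t}\geq 3$ satisfying $\sum_{i=1}^{t} n_{i}\leq n$ and $\operatorname{lcm}(n_{1},\ldots,n_{t})=r$. Applying part~(\ref{it:oddtorsionc}) of \reth{oddtorsion} with $m=n$ yields $\beta'\in B_n/[P_n,P_n]$ of order $r$ whose image $\overline{\sigma}(\beta')$ has cycle type $(n_{1},\ldots,n_{t})$, hence the same cycle type as $\theta$; thus $\theta=g\,\overline{\sigma}(\beta')\,g^{-1}$ for some $g\in\sn$. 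Choosing $\gamma\in B_n/[P_n,P_n]$ with $\overline{\sigma}(\gamma)=g$ (possible since $\overline{\sigma}$ is surjective) and setting $\beta=\gamma\beta'\gamma^{-1}$, we obtain an element of order $r$ with $\overline{\sigma}(\beta)=\theta$. This establishes the second assertion of the corollary, and together with the first step it shows that the set of orders of torsion elements of $B_n/[P_n,P_n]$ coincides with the set of odd element orders of $\sn$, i.e.\ that the torsion of $B_n/[P_n,P_n]$ equals the odd torsion of $\sn$.

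The last assertion is then immediate: given a cyclic subgroup $H\leq\sn$ of odd order $r$ with generator $\theta$, the element $\beta$ of order $r$ furnished by the previous step with $\overline{\sigma}(\beta)=\theta$ generates a cyclic subgroup $\widetilde{H}=\ang{\beta}$ of order $r$, which is therefore a finite-order subgroup of $B_n/[P_n,P_n]$ with $\overline{\sigma}(\widetilde{H})=\ang{\theta}=H$ (indeed $\overline{\sigma}$ restricts to an isomorphism $\widetilde{H}\to H$). I do not expect a genuine obstacle here, the corollary being essentially a repackaging of Theorems~\ref{th:2TorCryst} and~\ref{th:oddtorsion}; the only points needing care are the order-preservation argument of the first step, which uses the torsion freeness of $\ker{\overline{\sigma}}$ in an essential way, and the conjugation adjustment of the second step, needed to realise $\theta$ on the nose rather than merely up to conjugacy.
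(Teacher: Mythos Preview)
Your proof is correct and follows essentially the same route as the paper's: both use torsion freeness of $\ker{\overline{\sigma}}$ to show that $\overline{\sigma}$ preserves the order of torsion elements (the paper cites \relem{eleconj} for this, whereas you reprove it directly), then invoke \reth{2TorCryst} for oddness, and for the converse both decompose $\theta$ into odd cycles, apply \reth{oddtorsion}(\ref{it:oddtorsionc}) to obtain an element of the right order and cycle type, and conjugate to hit $\theta$ exactly. The final cyclic-subgroup statement is handled identically.
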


In \resec{subgroupsb3}, we focus on the simplest non-trivial case, that of $B_{3}/[P_{3},P_{3}]$, and we describe the structure of the preimages of the subgroups of $\sn[3]$ under the induced homomorphism $B_{3}/[P_{3},P_{3}]\to \sn[3]$. In the cases where these preimages are Bieberbach groups, we describe the corresponding flat $3$-manifold. We also carry out this analysis for the group $B_{3}/[P_{3},P_{3}]$ itself, and identify it in the international tables of crystallographic groups given in~\cite{BBNWS,HL}, as well as for the quotient of $B_3/[P_3, P_3]$ by the subgroup generated by the class of the full-twist braid. 

In \resec{SecBCGconj}, we study the conjugacy classes of the elements and the cyclic subgroups of $B_n/[P_n, P_n]$. This is achieved in Propositions~\ref{prop:i},~\ref{prop:ii} and~\ref{prop:iii} by studying in detail the action of certain elements $\delta_{r,k}$ and $\alpha_{r,k}$ (the latter being a generalisation of $\alpha_{r,0}$) on the basis $(A_{i,j})_{1\leq i<j\leq n}$ of $P_n/[P_n, P_n]$. It is straightforward to see that if any two elements of $B_n/[P_n, P_n]$ are conjugate then their permutations have the same cycle type, and the use of these propositions and a specific product of certain $\delta_{r,k}$ enables us to prove the converse:
\begin{thm}\label{th:classconj}
Let $n\geq 3$, and let $k\geq 3$ be odd. Two elements of $B_n/[P_n, P_n]$ of order $k$ are conjugate if and only if their permutations have the same cycle type. Thus two finite cyclic subgroups of $B_n/[P_n, P_n]$ of order $k$ are conjugate if and only if their images under $\overline{\sigma}$ are conjugate in $\sn$. 
\end{thm}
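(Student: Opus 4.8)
The plan is to prove the non-trivial direction: if two elements $\beta_1, \beta_2 \in B_n/[P_n,P_n]$ of odd order $k$ have permutations $\overline{\sigma}(\beta_1)$ and $\overline{\sigma}(\beta_2)$ of the same cycle type, then they are conjugate. First I would reduce to a normal form. Since $\overline{\sigma}(\beta_1)$ and $\overline{\sigma}(\beta_2)$ have the same cycle type, after conjugating by a lift of a permutation of $\sn$ we may assume $\overline{\sigma}(\beta_1) = \overline{\sigma}(\beta_2) = \theta$ for one fixed permutation $\theta$ of order $k$ (odd), say with cycle type $(n_1,\ldots,n_t)$ supported on consecutive blocks; and from \reco{torsion}, $\theta$ is the permutation of some finite-order element, which, using \reth{oddtorsion}\eqref{it:oddtorsionc} and the splitting into blocks, we may take to be the ``standard'' lift $\gamma$ built from the $\alpha_{0,r}$-type elements. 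Thus it suffices to show that any finite-order $\beta$ with $\overline{\sigma}(\beta) = \theta$ is conjugate to this standard $\gamma$. Writing $\beta = a\gamma$ with $a \in P_n/[P_n,P_n] = \Z^{n(n-1)/2}$, the condition that $\beta$ has order $k$ (i.e. $\beta^k = 1$, equivalently $(a\gamma)^k = 1$, i.e. $a \cdot \theta(a) \cdots \theta^{k-1}(a) = 0$ in additive notation, where $\theta$ acts on $\Z^{n(n-1)/2}$ via the action computed in \resec{SecCG}) says precisely that $a \in \ker(N_\theta)$, where $N_\theta = \id + \theta + \cdots + \theta^{k-1}$ is the norm endomorphism of the $\Z[\theta]$-module $\Z^{n(n-1)/2}$.

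Next I would set up the conjugacy computation. Conjugating $\beta = a\gamma$ by an element $b \in \Z^{n(n-1)/2}$ gives $(b-\theta(b)) + a$ times $\gamma$, i.e. it changes $a$ by an element of $\im(\id - \theta)$; conjugating by a more general element changes things by an element of the centraliser orbit. So the set of conjugacy classes of finite-order elements over $\theta$ is controlled by the quotient $\ker(N_\theta)/\im(\id-\theta)$, which is exactly the Tate cohomology group $\widehat{H}^{-1}(\ang{\theta}; \Z^{n(n-1)/2})$, together with the action of the centraliser $C_{\sn}(\theta)$. The heart of the proof is therefore to show that this group is \emph{trivial} — that every $a$ killed by the norm lies in $\im(\id-\theta)$ — or at least that all the resulting classes are identified once one also allows conjugation by lifts of $C_{\sn}(\theta)$. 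This is where the explicit structure of the action on the basis $(A_{i,j})_{1\leq i<j\leq n}$ enters: I would use Propositions~\ref{prop:i}, \ref{prop:ii} and~\ref{prop:iii}, which describe the orbits of the $A_{i,j}$ under the relevant elements $\delta_{r,k}$ and $\alpha_{r,k}$, to decompose $\Z^{n(n-1)/2}$ as a direct sum of $\Z[\theta]$-permutation-type modules (one family of summands indexed by pairs of cycles of $\theta$, one by single cycles, plus a fixed part), and then compute $\widehat{H}^{-1}$ of each summand. For summands on which $\theta$ acts as a transitive permutation of a basis of a $\Z$-lattice, $\widehat{H}^{-1}$ vanishes; the potentially non-trivial contributions come from summands where the orbit length divides $k$ properly, and here I expect to need the specific ``product of certain $\delta_{r,k}$'' alluded to in the paragraph preceding the theorem to realise the needed conjugator and kill the remaining classes.

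For the second assertion, I would deduce the statement about cyclic subgroups from the statement about elements. If $H_1, H_2$ are cyclic of order $k$ with $\overline{\sigma}(H_1)$ conjugate to $\overline{\sigma}(H_2)$ in $\sn$, pick generators $\beta_1, \beta_2$; the images $\overline{\sigma}(\beta_1)$ and $\overline{\sigma}(\beta_2)$ are generators of conjugate cyclic subgroups of $\sn$, so after replacing $\beta_2$ by a suitable power $\beta_2^\ell$ (with $\ell$ coprime to $k$, which generates the same subgroup $H_2$) we may assume $\overline{\sigma}(\beta_1)$ and $\overline{\sigma}(\beta_2^\ell)$ have the same cycle type; indeed two generators of conjugate cyclic subgroups of $\sn$ differing by such a power can be taken to be conjugate in $\sn$ since for odd $k$ the relevant power map is realised inside the centraliser on each cycle. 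Applying the first part, $\beta_1$ is conjugate to $\beta_2^\ell$, hence $H_1 = \ang{\beta_1}$ is conjugate to $\ang{\beta_2^\ell} = H_2$. The main obstacle, as indicated, is the vanishing of the Tate cohomology term: controlling $\ker(N_\theta)/\im(\id-\theta)$ for all odd-order $\theta$ simultaneously requires the fine orbit analysis of \resec{SecBCGconj}, and the role of the ``oddness'' of $k$ — which guarantees, via \reth{2TorCryst}, that there is no obstruction coming from $2$-torsion phenomena in the lattice — is exactly what makes this term manageable.
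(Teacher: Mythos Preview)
Your strategy is correct and is, once unwound, exactly what the paper does, but you have made it harder for yourself in one place and easier than it need be in another.

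First, the standard representative. The paper does \emph{not} use the $\alpha_{0,r}$-type elements; it uses the elements $\delta_{r,k}$ of \req{defdelta}, and takes as canonical lift the product $\delta=\delta_{0,k_1}\delta_{k_1,k_2}\cdots\delta_{\sum_{j<s}k_j,\,k_s}$ (see \req{delta}). The point is that $\delta$ is itself of order $k=\operatorname{lcm}(k_1,\ldots,k_s)$ (\relem{0}), so that $(A\delta)^k=N_\theta(A)$ with no extra term. Your appeal to \reco{torsion} and \reth{oddtorsion}(\ref{it:oddtorsionc}) would also produce a finite-order lift, so your setup is fine, but the $\delta$'s make the orbit bookkeeping cleaner (they induce the inverse permutation action of the $\alpha$'s on the $A_{i,j}$, \req{conjdelta}).

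Second, and more importantly: you anticipate difficulty with orbits whose length properly divides $k$, and suggest that conjugation by lifts of $C_{\sn}(\theta)$ might be needed to kill residual classes. This is unnecessary. For \emph{every} orbit of length $\ell\mid k$, the restriction of $N_\theta$ to the corresponding rank-$\ell$ sublattice is $(k/\ell)$ times the ``sum of coordinates'' map, so its kernel is exactly the hyperplane of zero-sum vectors, which coincides with the image of $\id-\theta$ on that sublattice. In cohomological language, $\widehat{H}^{-1}$ of a cyclic group with coefficients in \emph{any} permutation module vanishes (Shapiro reduces it to $\widehat{H}^{-1}(C_d;\Z)=0$). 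Hence $\ker(N_\theta)=\im(\id-\theta)$ on the whole lattice, and conjugation by an element of $P_n/[P_n,P_n]$ already suffices. The paper carries this out by hand: writing $A$ in the orbit-adapted basis (\req{coeffA}), \repr{adelta} says $A\delta$ has order $k$ iff the orbit-sums of the coefficients vanish (\req{sysAdelta}); then one solves $XA\delta X^{-1}=\delta$, i.e.\ $x_{i-1}-x_i=m_i$ along each orbit (\req{xisystem}), which has integer solutions precisely because $\sum_i m_i=0$. No centraliser tricks, no appeal to oddness beyond the existence of the finite-order lift.

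So your proof is right; the paper's is the same computation stripped of the cohomological vocabulary and done orbit by orbit with the explicit basis of \reth{gener}. Your deduction of the cyclic-subgroup statement is also correct and matches the paper's one-line ``direct consequence''.
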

Consequently, given $n\geq 3$, we may determine the number of conjugacy classes of elements of odd order $k$ in $B_n/[P_n, P_n]$.

From \relem{eleconj}, it follows that the set of isomorphism classes of the finite subgroups of $B_n/[P_n, P_n]$ is contained in the corresponding set of  finite subgroups of $\sn$ of odd order. One may ask whether this inclusion is strict or not. As we shall see in \reco{torsion}, any cyclic subgroup of $\sn$ of odd order is realised as a subgroup of $B_n/[P_n, P_n]$. 
Combining \reth{oddtorsion}(\ref{it:oddtorsionc}) with a result of~\cite{Ho}, this result may be extended to the Abelian subgroups of $\sn$:

\begin{thm}\label{th:realAbelian}
Let $n\geq 3$. Then there is a a one-to-one correspondence between the isomorphism classes of the finite Abelian subgroups of $B_{n}/[P_{n},P_{n}]$ and the isomorphism classes of the Abelian subgroups of $\sn$ of odd order.
\end{thm}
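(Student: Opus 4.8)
The plan is to establish the two directions of the correspondence. One direction is essentially automatic: by \relem{eleconj}, any finite subgroup of $B_{n}/[P_{n},P_{n}]$ maps under $\overline{\sigma}$ to a subgroup of $\sn$ of odd order, and because the kernel $\Z^{n(n-1)/2}$ of $\overline{\sigma}$ is torsion-free, the restriction of $\overline{\sigma}$ to any finite subgroup is injective. Hence every finite Abelian subgroup of $B_{n}/[P_{n},P_{n}]$ is isomorphic to an Abelian subgroup of $\sn$ of odd order, so the assignment sending a finite Abelian subgroup to the isomorphism class of its image is well defined. The content of the theorem is therefore the surjectivity of this assignment, i.e.\ the \emph{realisation} statement: every Abelian subgroup $H$ of $\sn$ of odd order is the image under $\overline{\sigma}$ of some (necessarily isomorphic, by the injectivity just noted) finite Abelian subgroup of $B_{n}/[P_{n},P_{n}]$.

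For the realisation, I would first reduce to the case where $H$ is a direct product of cyclic groups of odd order. By the result of~\cite{Ho} cited in the statement, any finite Abelian subgroup of $\sn$ of odd order is conjugate in $\sn$ to one whose cycle structure is particularly well controlled — concretely, to a subgroup supported on disjoint blocks of consecutive integers, where on the $i$-th block of size $n_i$ the group acts as a regular (or suitably transitive) permutation representation of a cyclic group $\Z_{n_i}$, and $H \cong \prod_{i} \Z_{n_i}$ with each $n_i$ odd. Since conjugation in $\sn$ corresponds, under $\overline{\sigma}$, to conjugation in $B_{n}/[P_{n},P_{n}]$ (conjugate subgroups of $\sn$ lift to conjugate realisations), it suffices to realise this normal form. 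Here \reth{oddtorsion}(\ref{it:oddtorsionc}) does the work: it produces, for odd integers $n_1,\dots,n_t$ with $\sum n_i \le n$, an element of $B_{n}/[P_{n},P_{n}]$ of order $\operatorname{lcm}(n_1,\dots,n_t)$ whose cycle type is $(n_1,\dots,n_t)$, and — as noted in the excerpt after the statement of that theorem — the proof actually embeds the direct product $\prod_{i=1}^{t} B_{n_i}/[P_{n_i},P_{n_i}]$ into $B_{n}/[P_{n},P_{n}]$ via the block inclusions of braid groups. Inside each factor $B_{n_i}/[P_{n_i},P_{n_i}]$, \repr{Torcaonimpar} (via \reth{oddtorsion}(\ref{it:oddtorsionb})) supplies an element $\beta_i$ of order $n_i$ projecting to an $n_i$-cycle; the subgroup $\langle \beta_1 \rangle \times \cdots \times \langle \beta_t \rangle$ of the direct product then embeds in $B_{n}/[P_{n},P_{n}]$ as a finite Abelian group isomorphic to $\prod_i \Z_{n_i} \cong H$, and by construction $\overline{\sigma}$ carries it onto the chosen normal form of $H$.

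Assembling the pieces: given an arbitrary finite Abelian subgroup $H$ of $\sn$ of odd order, apply the classification of~\cite{Ho} to write $H$ up to conjugacy in $\sn$ in the block normal form above with $H \cong \prod_{i=1}^{t}\Z_{n_i}$, $n_i$ odd; the block sizes automatically satisfy $\sum_i n_i \le n$ since the blocks are disjoint subsets of $\{1,\dots,n\}$; use \reth{oddtorsion}(\ref{it:oddtorsionc}) and its proof to obtain the internal direct product $\widetilde{H}=\langle\beta_1\rangle\times\cdots\times\langle\beta_t\rangle \le B_{n}/[P_{n},P_{n}]$ with $\widetilde{H}\cong H$ and $\overline{\sigma}(\widetilde{H})$ equal to the normal form; finally conjugate $\widetilde{H}$ in $B_{n}/[P_{n},P_{n}]$ by any lift of the element of $\sn$ conjugating the normal form back to $H$, which yields a finite Abelian subgroup of $B_{n}/[P_{n},P_{n}]$ mapping onto $H$. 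Combined with the injectivity of $\overline{\sigma}$ on finite subgroups, this shows the assignment $[\,\widetilde{H}\,]\mapsto[\,\overline{\sigma}(\widetilde{H})\,]$ is a bijection between the two sets of isomorphism classes.

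The main obstacle I anticipate is the precise bookkeeping around the result of~\cite{Ho}: one must extract exactly the statement that an odd-order Abelian subgroup of $\sn$ is conjugate to a direct product acting blockwise (rather than merely abstractly isomorphic to such a product), so that the realisation via disjoint braid-group blocks genuinely recovers the given subgroup up to conjugacy and hence up to isomorphism. A secondary point requiring care is checking that the block decomposition can indeed be taken with $\sum_i n_i \le n$ and with each block an interval of consecutive integers, which is what is needed to feed into \reth{oddtorsion}(\ref{it:oddtorsionc}); if~\cite{Ho} only gives a non-consecutive support, one first conjugates in $\sn$ to make the supports intervals, at no cost since that conjugation lifts to $B_{n}/[P_{n},P_{n}]$.
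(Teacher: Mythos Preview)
Your core argument is correct and follows the paper's approach, but you have over-read what the theorem asks for and what~\cite{Ho} supplies, and this leads you to an unnecessary (and, as stated, unjustified) final step.

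The theorem concerns \emph{isomorphism classes} only. You do not need to produce $\widetilde{H}$ with $\overline{\sigma}(\widetilde{H})=H$; you only need some finite Abelian subgroup of $B_n/[P_n,P_n]$ abstractly isomorphic to $H$. Indeed, the paper explicitly records (in the remarks following \reth{classconj}) that the stronger question of realising a prescribed $H\subseteq\sn$ as an actual image $\overline{\sigma}(G)$ is \emph{open}. Your claimed reduction via~\cite{Ho} to a ``block normal form up to conjugacy in $\sn$'' is not what Hoffman proves: his result is purely numerical, namely that $\Z_{k_1}\times\cdots\times\Z_{k_r}$ (with each $k_i$ a prime power) embeds in $\sn$ if and only if $\sum_i k_i\le n$. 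It does \emph{not} assert that an arbitrary Abelian $H\subseteq\sn$ is conjugate to a block-diagonal subgroup; in general it is not (consider $\langle(1,2,3)\rangle$ versus $\langle(1,2,3)(4,5,6)\rangle$ in $\sn[6]$). Consequently your final ``conjugate back to $H$'' step has no basis. Fortunately, none of this is needed.

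Stripped of that detour, your argument coincides with the paper's: write $H\cong\Z_{k_1}\times\cdots\times\Z_{k_r}$ with each $k_i$ an odd prime power, invoke~\cite{Ho} to get $\sum_i k_i\le n$, and then realise $\prod_i\Z_{k_i}$ inside $B_n/[P_n,P_n]$ by commuting finite-order elements supported on disjoint blocks. You do this via the embedding $\prod_i B_{k_i}/[P_{k_i},P_{k_i}]\hookrightarrow B_n/[P_n,P_n]$ from the proof of \reth{oddtorsion}(\ref{it:oddtorsionc}); the paper does it with the explicit commuting elements $\delta_{\sum_{j<l}k_j,\,k_l}$ of \relem{0}. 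These are the same construction in different clothing.
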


In \resec{nonAbelian}, we turn our attention to what is probably a more difficult open problem, namely the realisation of finite non-Abelian groups of $\sn$ as subgroups of the group $B_{n}/[P_{n},P_{n}]$. As a initial experiment, we consider the smallest value of $n$, $n=7$, for which $\sn$ possesses a non-Abelian subgroup of odd order. This subgroup is isomorphic to the Frobenius group of order $21$ that we denote by $\mathcal{F}$. We show that $\mathcal{F}$ is indeed realised as a subgroup of $B_7/[P_7, P_7]$.
\begin{thm}\label{th:frob}
The quotient group $B_7/[P_7, P_7]$ possesses a subgroup isomorphic to the Frobenius group $\mathcal{F}$.
\end{thm}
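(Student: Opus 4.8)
The plan is to realise $\mathcal F$ as the image of a group-theoretic section of a crystallographic extension. First I would fix a subgroup of $\sn[7]$, still denoted $\mathcal F$, isomorphic to the Frobenius group of order $21$ --- for instance the subgroup generated by the $7$-cycle $a=(1\,2\,3\,4\,5\,6\,7)$ and the order-$3$ element $b=(1\,2\,4)(3\,6\,5)$, which normalises $\ang{a}$, so that $\ang{a}\ang{b}=\ang{a,b}$ has order $21$ --- and record that its Sylow subgroups $P_{7}'=\ang{a}$ and $P_{3}'=\ang{b}$ are cyclic, of orders $7$ and $3$. Restricting the short exact sequence of \repr{cryst} along the inclusion $\mathcal F\hookrightarrow\sn[7]$ then produces an extension
\begin{equation*}
1 \to \Z^{21} \to G \stackrel{\overline{\sigma}}{\to} \mathcal F \to 1, \qquad\text{with}\qquad G=\overline{\sigma}^{-1}(\mathcal F)\leq B_{7}/[P_{7},P_{7}],
\end{equation*}
in which $\Z^{21}=P_{7}/[P_{7},P_{7}]$ is viewed as an $\mathcal F$-module by restricting the $\sn[7]$-action computed in \resec{SecCG}. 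Everything reduces to proving that this extension splits, since the image of a splitting $s\colon\mathcal F\to G$ is then a subgroup of $B_{7}/[P_{7},P_{7}]$ isomorphic to $\mathcal F$.

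To establish splitting I would pass to the class $\xi\in H^{2}(\mathcal F;\Z^{21})$ of the extension and use restriction to Sylow subgroups. Since $H^{2}(\mathcal F;\Z^{21})$ is annihilated by $\ord{\mathcal F}=21$, it is the direct sum of its $7$-primary and $3$-primary parts; and for $p\in\{3,7\}$ the standard identity $\operatorname{cor}\circ\operatorname{res}=[\mathcal F:P_{p}']\cdot\id$ on $H^{2}(\mathcal F;\Z^{21})$, together with $\gcd([\mathcal F:P_{p}'],p)=1$, makes the restriction $H^{2}(\mathcal F;\Z^{21})\to H^{2}(P_{p}';\Z^{21})$ injective on the $p$-primary part. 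Hence the combined map $H^{2}(\mathcal F;\Z^{21})\to H^{2}(P_{3}';\Z^{21})\oplus H^{2}(P_{7}';\Z^{21})$ is injective, and it is enough to check that $\operatorname{res}_{P_{7}'}(\xi)$ and $\operatorname{res}_{P_{3}'}(\xi)$ both vanish. Now $\operatorname{res}_{P_{7}'}(\xi)$ is the class of the pulled-back extension $1\to\Z^{21}\to\overline{\sigma}^{-1}(P_{7}')\to P_{7}'\to 1$; applying \reco{torsion} to the odd-order element $a\in\sn[7]$ yields $\beta\in B_{7}/[P_{7},P_{7}]$ of order $7$ with $\overline{\sigma}(\beta)=a$, so $\ang{\beta}$ maps isomorphically onto $P_{7}'$, the sub-extension splits, and $\operatorname{res}_{P_{7}'}(\xi)=0$. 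The same argument applied to the odd-order element $b$ gives $\operatorname{res}_{P_{3}'}(\xi)=0$; hence $\xi=0$, the extension splits, and the proof is complete.

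What makes the argument work is that both Sylow subgroups of $\mathcal F$ are cyclic, so that the splitting over each of them is supplied directly by the realisation of odd-order cyclic subgroups in \reco{torsion}; I do not expect a serious obstacle in this case. The genuine difficulty --- which is why the general question of \resec{nonAbelian} is much harder --- is that for a non-Abelian odd-order subgroup $H\leq\sn$ possessing a non-cyclic Sylow $p$-subgroup $P$, one would instead need the restriction of the extension class to $H^{2}(P;\Z^{n(n-1)/2})$ to vanish, and there is no analogue of \reco{torsion} available to force this. A more computational alternative would be to pick explicit lifts $\widetilde{a},\widetilde{b}\in B_{7}/[P_{7},P_{7}]$ of $a$ and $b$ of orders $7$ and $3$ (these exist by \reco{torsion}) and to correct $\widetilde{b}$, and if necessary $\widetilde{a}$, by suitable elements of $\Z^{21}$ so that $\widetilde{b}\,\widetilde{a}\,\widetilde{b}^{-1}\in\ang{\widetilde{a}}$ (which then forces $\ang{\widetilde{a},\widetilde{b}}$ to have order $21$); the solvability of the resulting linear system over $\Z^{21}$ is precisely what the cohomological computation above guarantees, so the two routes are equivalent, with the cohomological one being cleaner.
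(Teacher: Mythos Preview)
Your argument is correct and takes a genuinely different route from the paper's. The paper proceeds by brute force: it fixes explicit order-$3$ and order-$7$ elements $x,y\in B_{7}/[P_{7},P_{7}]$ lifting generators of $\mathcal F_{0}\subset\sn[7]$, computes the ``defect'' $xyx^{-1}y^{-2}\in P_{7}/[P_{7},P_{7}]$ directly from the Artin relations and~\req{conjugAij2}, and then searches for $N\in P_{7}/[P_{7},P_{7}]$ so that $v=Ny$ still has order~$7$ while $xvx^{-1}=v^{2}$. This amounts to solving a concrete linear system (equations~\reqref{sumxyz} and~\reqref{frob5}) over~$\Z$, and an explicit solution is exhibited. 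Your approach instead shows, via the transfer identity and the fact that both Sylow subgroups of $\mathcal F$ are cyclic, that the extension class in $H^{2}(\mathcal F;\Z^{21})$ vanishes, using \reco{torsion} to split over each Sylow. What you gain is conceptual clarity and an immediate generalisation: the same argument shows that any odd-order subgroup of $\sn$ all of whose Sylow subgroups are cyclic embeds in $B_{n}/[P_{n},P_{n}]$. What the paper's computation buys is explicit generators $x$ and $v=Ny$, together with the full solution set of the linear system; these are then reused verbatim in the proof of \reth{uniqueconj} to establish that there is a single conjugacy class of copies of $\mathcal F$. Your final paragraph already identifies this computational alternative as equivalent to the cohomological one, which is exactly right.
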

It then follows from \reth{oddtorsion} that $\mathcal{F}$ is realised as a subgroup of $B_{n}/[P_{n},P_{n}]$ for all $n\geq 7$. In \repr{frob}, we prove that $B_7/[P_7, P_7]$ admits a single conjugacy class of subgroups isomorphic to $\mathcal{F}$. We remark that we do not currently know of an example of a subgroup of odd order of $\sn$ whose isomorphism class is not represented by a subgroup of $B_n/[P_n, P_n]$.

%
%
%

\subsection*{Acknowledgements}

The initial ideas for this paper occurred during the stay of the third author at the Laboratoire de Math\'ematiques Nicolas Oresme from the  5\textsuperscript{th} January to the 5\textsuperscript{th} June 2012, and were developed during the period 2012--2014 when the third author was at the Departamento de Matem\'atica do IME~--~Universidade de S\~ao Paulo and was partially supported by a project grant n\up{o}~2008/58122-6 from FAPESP, and also by a project grant n\up{o}~151161/2013-5 from CNPq. Further work took place during the visit of the first author to the Departamento de Matem\'atica, Universidade Federal de Bahia during the period 16\up{th}--20\up{th}~May 2014 and also during the visit of the second author to the Departamento de Matem\'atica do IME~--~Universidade de S\~ao Paulo during the periods 10\up{th}~July--2\up{nd}~August 2014, and to the Departamento de Matem\'atica do IME~--~Universidade de S\~ao Paulo and the Departamento de Matem\'atica, Universidade Federal de Bahia during the period 1\up{st}--17\up{th}~November 2014, and was partially supported by the international CNRS/FAPESP programme n\up{o}~226555.

\section{Crystallographic and Bieberbach groups}\label{sec:crystal}

In this section, we recall briefly the definition of crystallographic and Bieberbach groups, and we characterise crystallographic groups in terms of a representation that arises from certain group extensions whose kernel is a free Abelian group of finite rank and whose quotient is finite. We also review some results concerning Bieberbach groups and the fundamental groups of flat Riemannian manifolds. For more details, see~\cite[Section~I.1.1]{Charlap},~\cite[Section~2.1]{Dekimpe} or~\cite[Chapter~3]{Wolf}. From now on, we identify $\operatorname{Aut}(\Z^n)$ with $\operatorname{GL}(n,\Z)$. 

\begin{defn}\label{DefCristGeo}
A discrete and uniform subgroup  $\Pi$ of $\R^n\rtimes \operatorname{O}(n,\R)\subseteq \operatorname{Aff}(\R^n)$ is said to be a \emph{crystallographic group of dimension $n$}. If in addition $\Pi$ is torsion free then $\Pi$ is called a \emph{Bieberbach group} of dimension $n$.
\end{defn}

\begin{defn}\label{RepInt}  Let  $\Phi$ be a group.  
An  \emph{integral representation of rank $n$ of  $\Phi$} is defined to be a homomorphism $\map{\Theta}{\Phi}[\operatorname{Aut}(\Z^n)]$.  Two such representations are said to be \emph{equivalent} if their images are conjugate in $\operatorname{Aut}(\Z^n)$. We say that $\Theta$ is a \emph{faithful representation} if it is injective.
\end{defn}

The following characterisation of crystallographic groups seems to be well known to the experts in the field. Since we did not find a suitable reference, we give a short proof. 

\begin{lem}\label{lem:DefC2}
Let $\Pi$ be a group. Then $\Pi$ is a crystallographic group if and only if there exist an integer $n\in \N$ and a short exact sequence 
\begin{equation}\label{eq:SeqCrist2}
\xymatrix{
0 \ar[r] & \Z^n \ar[r] & \Pi \ar[r]^{\zeta} & \Phi \ar[r] & 1}	
\end{equation}
such that:
\begin{enumerate}
\item\label{it:DefC2a} $\Phi$ is finite, and
\item\label{it:DefC2b} the integral representation $\map{\Theta}{\Phi}[\operatorname{Aut}(\Z^n)]$, induced by 
conjugation on $\Z^n$ and defined by $\Theta(\phi)(x)=\pi x \pi^{-1}$, where $x\in \Z^{n}$, $\phi\in \Phi$ and $\pi\in \Pi$ is such that $\zeta(\pi)=\phi$, is faithful. 
\end{enumerate}
\end{lem}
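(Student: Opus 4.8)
The plan is to prove the two implications separately, using the well-known correspondence between group extensions with Abelian kernel and pairs consisting of an action together with a $2$-cocycle, and the classical Bieberbach-type characterisation of crystallographic groups via discreteness and uniformity inside $\operatorname{Aff}(\R^n)$.

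For the forward implication, suppose $\Pi$ is a crystallographic group of dimension $n$, so by Definition~\ref{DefCristGeo} it is a discrete uniform subgroup of $\R^n\rtimes \operatorname{O}(n,\R)$. First I would recall the classical fact (Bieberbach's first theorem, see~\cite[Section~I.1.1]{Charlap} or~\cite[Chapter~3]{Wolf}) that the subgroup $L=\Pi\cap (\R^n\times\{\id\})$ of pure translations is a lattice of full rank $n$ in $\R^n$, that $L$ is normal in $\Pi$, and that the quotient $\Phi=\Pi/L$ is finite; this gives the short exact sequence~\reqref{SeqCrist2} with $\Z^n\cong L$ and establishes~(\ref{it:DefC2a}). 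For~(\ref{it:DefC2b}), note that the conjugation action of $\pi=(a,A)\in\Pi$ on a translation $(v,\id)\in L$ is $(a,A)(v,\id)(a,A)^{-1}=(Av,\id)$, so the induced integral representation $\Theta\colon\Phi\to\operatorname{Aut}(L)\cong\operatorname{GL}(n,\Z)$ is precisely the map sending the class of $(a,A)$ to the linear part $A$ acting on the lattice $L$. If $\Theta(\phi)=\id$ for some $\phi=\zeta(\pi)$ with $\pi=(a,A)$, then $A$ fixes $L$ pointwise; since $L$ spans $\R^n$, this forces $A=\id$, so $\pi=(a,\id)$ is itself a translation. To conclude $\pi\in L$, and hence $\phi=1$, I would argue that $\pi$ normalises $L$ and the group generated by $L$ and $\pi$ is still discrete; a translation that is not in the lattice but normalises it together with discreteness leads to a contradiction (indeed $\pi$ would lie in $\ker\Theta$ which must be the full translation subgroup of $\Pi$, and by discreteness the translation subgroup is exactly $L$). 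Hence $\Theta$ is faithful.

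For the converse, suppose we are given a short exact sequence~\reqref{SeqCrist2} with $\Phi$ finite and $\Theta$ faithful. The idea is to realise $\Pi$ concretely as a discrete uniform subgroup of $\operatorname{Aff}(\R^n)$. First, since $\Phi$ is finite and $\Theta(\Phi)\subseteq\operatorname{GL}(n,\Z)$ is a finite subgroup, a standard averaging argument produces a positive-definite symmetric bilinear form on $\R^n$ (extending the standard form on $\Z^n$ suitably, or rather averaging the standard form over the finite group $\Theta(\Phi)$) that is invariant under $\Theta(\Phi)$; after a change of basis of $\R^n$ we may assume $\Theta(\Phi)\subseteq\operatorname{O}(n,\R)$, while $\Z^n$ remains a full lattice (possibly no longer the standard one, but still cocompact and discrete). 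Next, the extension~\reqref{SeqCrist2} is classified by a $2$-cocycle $f\in Z^2(\Phi;\Z^n)$ relative to the action $\Theta$; using this cocycle one builds an explicit injective homomorphism $\map{\rho}{\Pi}[\R^n\rtimes\operatorname{O}(n,\R)]$ by sending a pure translation to the corresponding element of $\R^n\times\{\id\}$ and a general element to the affine transformation with linear part $\Theta$ of its image in $\Phi$ and translation part determined by a set-theoretic section of $\zeta$ together with $f$ (the cocycle condition is exactly what makes $\rho$ a homomorphism). One checks $\rho$ is injective because its restriction to $\Z^n$ is injective and $\Theta$ is injective on $\Phi$. Finally, $\rho(\Pi)$ is discrete because it contains the lattice $\rho(\Z^n)$ with finite index image in $\operatorname{O}(n,\R)$, and it is uniform (cocompact) because $\R^n/\rho(\Z^n)$ is compact and $\rho(\Pi)/\rho(\Z^n)$ is finite; thus $\rho(\Pi)$, and hence $\Pi$, is a crystallographic group of dimension $n$.

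The main obstacle, and the point deserving most care, is the faithfulness bookkeeping in both directions: in the forward direction, showing that $\ker\Theta$ is exactly the translation lattice $L$ (equivalently, that a non-lattice translation cannot normalise $L$ while preserving discreteness), and in the converse direction, verifying that the cocycle-built map $\rho$ is genuinely injective — this is where the hypothesis that $\Theta$ is faithful is used in an essential way, since without it a non-trivial element of $\Phi$ acting trivially on $\Z^n$ could map under $\rho$ to an element of the translation subgroup already hit by $\rho(\Z^n)$, destroying injectivity. The averaging argument putting $\Theta(\Phi)$ inside $\operatorname{O}(n,\R)$ and the identification of $H^2(\Phi;\Z^n)$ with extension classes are standard and I would quote them rather than reprove them.
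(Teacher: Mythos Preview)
Your argument is correct, but it takes a genuinely different route from the paper's, and the difference is worth noting.

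For the converse (conditions (\ref{it:DefC2a}) and (\ref{it:DefC2b}) imply $\Pi$ is crystallographic), the paper does not build an explicit affine embedding at all. Instead it invokes the algebraic characterisation of~\cite[Theorem~2.1.4]{Dekimpe}: a group $\Pi$ fitting into~\reqref{SeqCrist2} with $\Phi$ finite is crystallographic if and only if $\Z^n$ is a \emph{maximal Abelian} subgroup of $\Pi$. The paper then argues by contradiction: if $\Z^n$ were not maximal Abelian, pick $a\in A\setminus\Z^n$ with $A$ Abelian and $\Z^n\subsetneqq A$; then $\zeta(a)\neq 1$ but $a$ centralises $\Z^n$, so $\Theta(\zeta(a))=\id$, contradicting faithfulness. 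This is a three-line argument once the cited theorem is in hand. Your approach---averaging to land $\Theta(\Phi)$ in $\operatorname{O}(n,\R)$, then using a $2$-cocycle to build $\rho$ explicitly---is more self-contained and actually exhibits the affine realisation, at the cost of considerably more work (and care with the injectivity of $\rho$, which you handle correctly).

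For the forward direction the two proofs are morally the same (both cite Bieberbach/Charlap), but you over-complicate the faithfulness step slightly: once you have shown that $\pi=(a,A)$ with $\Theta(\zeta(\pi))=\id$ forces $A=\id$, the element $\pi=(a,\id)$ is a pure translation lying in $\Pi$, hence lies in $L=\Pi\cap(\R^n\times\{\id\})$ \emph{by the definition of $L$}. No separate discreteness argument is needed to conclude $\pi\in L$ and $\phi=1$.
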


\begin{defn}
If $\Pi$ is a crystallographic group, the integer $n$ that appears in the statement of~\relem{DefC2} is called the \emph{dimension} of $\Pi$, the finite group $\Phi$ is called the \emph{holonomy group} of $\Pi$, and the integral representation $\map{\Theta}{\Phi}[\operatorname{Aut}(\Z^{n})]$ is called the \emph{holonomy representation of  $\Pi$}.
\end{defn}

\begin{proof}[Proof of \relem{DefC2}]
Let $\Phi$ and $\Pi$ be groups, and suppose that there exist $n\in \N$ and a short exact sequence of the form~\reqref{SeqCrist2} such that conditions~(\ref{it:DefC2a}) and~(\ref{it:DefC2b}) hold. Assume on the contrary that $\Pi$ is not crystallographic. The characterisation of~\cite[Theorem~2.1.4]{Dekimpe} implies that $\Z^n$ is not a maximal Abelian subgroup of $\Pi$, in other words, there exists an Abelian group $A$ for which $\Z^n\subsetneqq A\subseteq \Pi$. Let $a\in A\setminus \Z^n$. Then $\zeta(a)\neq 1$ and $\Theta(\zeta(a))(x)=axa^{-1}=x$ for all $x\in \Z^{n}$. Hence $\Theta(\zeta(a))=\id_{\Z^{n}}$,  which contradicts the hypothesis that $\Theta$ is injective. We conclude that $\Pi$ is a crystallographic group of dimension $n$ with holonomy $\Phi$.

The converse follows from the paragraph preceding~\cite[Definition~I.6.2]{Charlap}, since the short exact sequence~\reqref{SeqCrist2} gives rise to an integral representation  
$\map{\Theta}{\Phi}[\operatorname{Aut}(\Z^{n})]$ that is faithful by~\cite[Proposition~I.6.1]{Charlap}. 
\end{proof}

The following lemma will be very useful in what follows.
\begin{lem}\label{lem:eleconj}
Let $G,G'$ be groups, and let $\map{f}{G}[G']$ be a homomorphism whose kernel is torsion free. If $K$ is a finite subgroup of $G$ then the restriction $\map{f\left\lvert_{K}\right.}{K}[f(K)]$ of $f$ to $K$ is an isomorphism. In particular, with the notation of the statement of \relem{DefC2}, if $\Pi$ is a crystallographic group then the restriction $\map{\zeta\left\lvert_{K}\right.}{K}[\zeta(K)]$ of $\zeta$ to any finite subgroup $K$ of $\Pi$ is an isomorphism.
\end{lem}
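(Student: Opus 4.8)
The statement to prove is Lemma~\ref{lem:eleconj}: if $\map{f}{G}[G']$ has torsion-free kernel and $K\leq G$ is finite, then $f|_K$ is an isomorphism onto $f(K)$.

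\textbf{Proposed proof.} The plan is to show that $f|_K$ is injective, since surjectivity onto $f(K)$ is automatic by definition of the image of a restriction. To prove injectivity, it suffices to show that $\ker{f|_K} = K \cap \ker{f}$ is trivial. Now $K\cap\ker f$ is a subgroup of the finite group $K$, hence is itself finite, and it is also a subgroup of $\ker f$, which is torsion free. A finite subgroup of a torsion-free group must be trivial: indeed, any element $x$ of a finite group has finite order, so if $x$ lies in a torsion-free group then $x$ must be the identity. Therefore $K\cap\ker f = \{1\}$, so $f|_K$ is injective, and being surjective onto its image $f(K)$, it is an isomorphism $\map{f|_K}{K}[f(K)]$.

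For the ``in particular'' assertion, we apply the first part with $G = \Pi$, $G' = \Phi$, and $f = \zeta$, where the short exact sequence~\reqref{SeqCrist2} of \relem{DefC2} identifies $\ker\zeta$ with $\Z^n$, which is torsion free. Hence for any finite subgroup $K$ of the crystallographic group $\Pi$, the restriction $\map{\zeta|_K}{K}[\zeta(K)]$ is an isomorphism. This is the entire argument; there is no real obstacle, as the only ingredient is the elementary fact that torsion-free groups contain no non-trivial finite subgroups. The lemma is stated separately precisely because it will be invoked repeatedly later (for instance when transferring finite subgroups of $\sn$ back to $B_n/[P_n,P_n]$ via the section $\overline{\sigma}$, and when comparing conjugacy classes of finite-order elements).
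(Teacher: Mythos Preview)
Your proof is correct and follows the same approach as the paper: both argue that $f|_K$ is injective because $K\cap\ker f$ is a finite subgroup of the torsion-free group $\ker f$ and hence trivial, with the second assertion following immediately by taking $f=\zeta$. The paper compresses this into a single sentence, while you spell out the details; the mathematical content is identical.
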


\begin{proof}
Since $\ker{f}$ is torsion free, the restriction of $f$ to the finite subgroup $K$ is injective, which yields the first part, and the second part then follows directly. 
\end{proof}

\begin{cor}\label{cor:CSG}
Let $\Pi$ be a crystallographic group of dimension $n$ and holonomy group $\Phi$, and let $H$ be a subgroup of $\Phi$.
Then there exists a crystallographic subgroup of $\Pi$ of dimension $n$ with holonomy group $H$.
\end{cor}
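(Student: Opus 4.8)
The plan is to deduce \reco{CSG} directly from \relem{DefC2} by restricting the defining short exact sequence of $\Pi$ to the preimage of $H$ under $\zeta$. Concretely, suppose $\Pi$ is crystallographic of dimension $n$ with holonomy $\Phi$, so by \relem{DefC2} there is a short exact sequence
\begin{equation*}
0 \to \Z^{n} \to \Pi \stackrel{\zeta}{\to} \Phi \to 1
\end{equation*}
whose associated integral representation $\map{\Theta}{\Phi}[\operatorname{Aut}(\Z^{n})]$ is faithful. Given a subgroup $H\leq \Phi$, set $\Pi_{H}=\zeta^{-1}(H)$. Since $\Z^{n}=\ker{\zeta}\subseteq \Pi_{H}$, restricting $\zeta$ yields a short exact sequence
\begin{equation*}
0 \to \Z^{n} \to \Pi_{H} \stackrel{\zeta\left\lvert_{\Pi_{H}}\right.}{\to} H \to 1,
\end{equation*}
and the same free Abelian kernel $\Z^{n}$ of rank $n$ appears, so the candidate dimension is again $n$.

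Next I would check the two hypotheses of \relem{DefC2} for this restricted sequence. Condition~(\ref{it:DefC2a}) is immediate: $H$ is a subgroup of the finite group $\Phi$, hence finite. For condition~(\ref{it:DefC2b}), observe that the integral representation $\map{\Theta_{H}}{H}[\operatorname{Aut}(\Z^{n})]$ induced by conjugation in $\Pi_{H}$ on $\Z^{n}$ is nothing other than the restriction $\Theta\left\lvert_{H}\right.$ of the holonomy representation of $\Pi$; this is because the conjugation action of an element $\pi\in\Pi_{H}$ on $\Z^{n}$ depends only on $\zeta(\pi)\in H\subseteq\Phi$, and is computed by the very same formula $\Theta(\phi)(x)=\pi x\pi^{-1}$. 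Since $\Theta$ is injective, its restriction $\Theta\left\lvert_{H}\right.=\Theta_{H}$ to the subgroup $H$ is also injective, so the restricted representation is faithful. By \relem{DefC2}, $\Pi_{H}$ is therefore a crystallographic group of dimension $n$ with holonomy group $H$, and it is clearly a subgroup of $\Pi$. This completes the argument.

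There is really no serious obstacle here: the corollary is a formal consequence of the characterisation in \relem{DefC2}, and the only point requiring a line of care is the identification of the holonomy representation of $\Pi_{H}$ with the restriction to $H$ of the holonomy representation of $\Pi$, which in turn rests on the observation that the kernel of the restricted sequence is still exactly $\Z^{n}$ (not some larger group), so that no collapsing of the action occurs. One could alternatively phrase the proof geometrically, noting that $\Pi_{H}\subseteq\Pi\subseteq\R^{n}\rtimes\operatorname{O}(n,\R)$ is automatically discrete, and is uniform because $[\Pi:\Pi_{H}]=[\Phi:H]<\infty$ while $\Pi$ itself is uniform; but the algebraic route via \relem{DefC2} is cleaner and self-contained within the excerpt.
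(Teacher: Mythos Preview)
Your proof is correct and follows exactly the paper's approach: take $\Pi_{H}=\zeta^{-1}(H)$ and apply \relem{DefC2} to the restricted short exact sequence. The paper states this in a single sentence, while you have helpfully spelled out the verification of conditions~(\ref{it:DefC2a}) and~(\ref{it:DefC2b}), but the argument is the same.
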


\begin{proof}
The result follows by considering the short exact sequence~\reqref{SeqCrist2}, and by applying \relem{DefC2} to the subgroup $\zeta^{-1}(H)$ of $\Pi$.
\end{proof}

\begin{defn}
A Riemannian manifold $M$ is called \emph{flat} if it has zero curvature at every point. 
\end{defn}

As a consequence of the first Bieberbach Theorem, there is a correspondence between Bieberbach groups and fundamental groups of closed flat Riemannian manifolds (see~\cite[Theorem~2.1.1]{Dekimpe} and the paragraph that follows it). We recall that the flat manifold determined by a Bieberbach group $\Pi$ is orientable if and only if the integral representation $\map{\Theta}{\Phi}[\operatorname{GL}(n,\Z)]$ satisfies $\im{\Theta}\subseteq \operatorname{SO}(n,\Z)$. This being the case, we say that $\Pi$ is \emph{an orientable Bieberbach group}. By~\cite[Corollary~3.4.6]{Wolf}, the holonomy group of a flat manifold $M$ is isomorphic to the group $\Phi$.

It is a natural problem to classify the finite groups that are the holonomy group of a flat manifold. The answer was given by L.~Auslander and M.~Kuranishi in 1957.

\begin{thm}[Auslander and Kuranishi~{\cite[Theorem~3.4.8]{Wolf}, \cite[Theorem~III.5.2]{Charlap}}]\label{th:TeoAK}
Any finite group is the holonomy group of some flat manifold.
\end{thm}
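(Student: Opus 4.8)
The plan is to realise an arbitrary finite group $G$ as the holonomy group of a Bieberbach group, and then to invoke the correspondence between Bieberbach groups and closed flat manifolds (see~\cite[Theorem~2.1.1]{Dekimpe}) together with~\cite[Corollary~3.4.6]{Wolf}, which identifies the holonomy group of the resulting flat manifold $\R^{n}/\Pi$ with $G$. By \relem{DefC2}, a group $\Pi$ fitting into a short exact sequence $0\to \Z^{n}\to \Pi\to G\to 1$ with faithful associated integral representation is crystallographic with holonomy $G$, and it is a Bieberbach group precisely when it is torsion free. So the problem reduces to producing a finitely generated $\Z$-free $\Z G$-module $M$ on which $G$ acts faithfully, together with a class $\alpha\in H^{2}(G;M)$ whose extension $0\to M\to \Pi\to G\to 1$ is torsion free.

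The first step is to rephrase torsion-freeness of $\Pi$ cohomologically. If $\Pi$ had a nontrivial torsion element, some power of it would have prime order $p$; since $M$ is torsion free, \relem{eleconj} forces the quotient map $\zeta$ to carry the cyclic group it generates isomorphically onto a subgroup $H\cong\Z/p$ of $G$, so the pulled-back extension $0\to M\to \zeta^{-1}(H)\to H\to 1$ splits. Conversely a splitting over a subgroup $H\cong\Z/p$ produces an order-$p$ element of $\Pi$. Since pulling the extension back along $H\hookrightarrow G$ corresponds on $H^{2}$ to the restriction map, this gives the criterion: $\Pi$ is torsion free if and only if $\operatorname{res}^{G}_{H}(\alpha)\neq 0$ in $H^{2}(H;M)$ for every subgroup $H\le G$ of prime order.

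Next I would construct $M$ and $\alpha$. For each subgroup $H\le G$ of prime order $p$, let $M_{H}=\Z[G/H]\cong\operatorname{Ind}_{H}^{G}\Z$ be the permutation module on the left cosets; Shapiro's lemma gives $H^{2}(G;M_{H})\cong H^{2}(H;\Z)\cong\Z/p$, so choose a generator $\alpha_{H}\in H^{2}(G;M_{H})$. Set $M=\Z[G]\oplus\bigoplus_{H}M_{H}$, the sum over the finitely many prime-order subgroups of $G$, and let $\alpha\in H^{2}(G;M)=H^{2}(G;\Z[G])\oplus\bigoplus_{H}H^{2}(G;M_{H})$ be $\bigoplus_{H}\alpha_{H}$, extended by zero on the $\Z[G]$-summand. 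The summand $\Z[G]$ is the regular representation, so $G$ acts faithfully on $M$, giving condition~(\ref{it:DefC2b}) of \relem{DefC2}. For torsion-freeness, fix a prime-order subgroup $H_{0}$: the $M_{H_{0}}$-component of $\operatorname{res}^{G}_{H_{0}}(\alpha)$ is $\operatorname{res}^{G}_{H_{0}}(\alpha_{H_{0}})$, which is nonzero because the Mackey double-coset formula — concretely, the $H_{0}$-equivariant projection $\Z[G/H_{0}]\to\Z$ onto the summand of the trivial coset, which is the counit realising the inverse Shapiro isomorphism — carries it to the chosen generator of $H^{2}(H_{0};\Z)$. Hence $\operatorname{res}^{G}_{H_{0}}(\alpha)\neq 0$ for every $H_{0}$, so $\Pi$ is torsion free, that is, a Bieberbach group, of dimension $\ord{G}+\sum_{H}[G:H]$ and with holonomy group $G$.

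The substantive ingredient here is the choice of module: one needs $\Z G$-lattices whose restrictions to the prime-order subgroups of $G$ support nonzero degree-$2$ cohomology, and the coset permutation modules $\Z[G/H]$ are tailor-made for this via Shapiro's lemma. The one point genuinely requiring care is then the nonvanishing of $\operatorname{res}^{G}_{H}(\alpha_{H})$ in $H^{2}(H;M_{H})$, which rests on the fact that in the Mackey decomposition of $\operatorname{res}^{G}_{H}\operatorname{Ind}_{H}^{G}\Z$ the trivial double coset contributes the identity map of $H^{2}(H;\Z)$; granting this, the remainder of the argument — the reduction of torsion-freeness to prime-order subgroups via \relem{eleconj}, and the direct-sum bookkeeping — is routine.
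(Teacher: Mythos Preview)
Your argument is correct and is essentially the classical Auslander--Kuranishi proof as presented, for instance, in~\cite[Theorem~III.5.2]{Charlap}: build the lattice $M=\Z[G]\oplus\bigoplus_{H}\Z[G/H]$ over the prime-order subgroups, use Shapiro's lemma to pick a nonzero class in each $H^{2}(G;\Z[G/H])$, and verify via the Mackey/counit identity that the resulting extension restricts nontrivially to every prime-order subgroup, hence is torsion free. The reduction of torsion-freeness to nonvanishing of the restriction over prime-order subgroups (your ``special class'' criterion) is also standard and is argued correctly.

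There is, however, nothing to compare with in the paper itself: \reth{TeoAK} is stated here only as a quoted result with references to~\cite{Wolf} and~\cite{Charlap}, and the paper gives no proof of it. Its role in the paper is merely to provide context for \reth{TeoAKBraids}, which recovers the special case of $2$-groups by an entirely different and more concrete route, namely by exhibiting the relevant Bieberbach groups as explicit subgroups $\widetilde{H}_{n}$ of $B_{n}/[P_{n},P_{n}]$.
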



\section{Artin braid groups and crystallographic groups}\label{sec:SecCG}

In this section we prove \repr{cryst} and \reth{2TorCryst}, namely that if $n\geq 3$ then the quotient group $B_n/[P_n,P_n]$ of the Artin braid group $B_{n}$ by the commutator subgroup $[P_{n},P_{n}]$ of its pure braid subgroup $P_{n}$ is crystallographic and does not have $2$-torsion. As we shall see in \resec{SecBCG}, $B_n/[P_n,P_n]$ possesses (odd) torsion. We first recall some facts about the Artin braid group $B_{n}$ on $n$ strings. We refer the reader to~\cite{Ha} for more details. It is well known that $B_{n}$ possesses a presentation with generators $\sigma_{1},\ldots,\sigma_{n-1}$ that are subject to the following relations:
\begin{gather}
\text{$\sigma_{i} \sigma_{j} = \sigma_{j}  \sigma_{i}$ for all $1\leq i<j\leq n-1$}\label{eq:artin1}\\
\text{$\sigma_{i+1} \sigma_{i} \sigma_{i+1} =\sigma_{i} \sigma_{i+1} \sigma_{i}$ for all $1\leq i\leq n-2$.}\label{eq:artin2}
\end{gather}
Let $\map{\sigma}{B_{n}}[\sn]$ be the homomorphism defined on the given generators of $B_{n}$ by $\sigma(\sigma_{i})=(i,i+1)$ for all $1\leq i\leq n-1$. Just as for braids, we read permutations from left to right so that if $\alpha, \beta \in \sn$ then their product is defined by $\alpha\cdot \beta (i)=\beta(\alpha(i))$ for $i=1,2,\ldots, n$. The pure braid group $P_{n}$ on $n$ strings is defined to be the kernel of $\sigma$, from which we obtain the following short exact sequence:
\begin{equation}\label{eq:sespn}
1 \to P_n \to  B_n \stackrel{\sigma}{\to} \sn \to 1.
\end{equation}
A generating set of $P_{n}$ is given by $\brak{A_{i,j}}_{1\leq i<j\leq n}$, where:
\begin{equation}\label{eq:defaij}
A_{i,j}=\sigma_{j-1}\cdots \sigma_{i+1}\sigma_{i}^{2} \sigma_{i+1}^{-1}\cdots \sigma_{j-1}^{-1}.
\end{equation}
Relations~\reqref{artin1} and~\reqref{artin2} may be used to show that:
\begin{equation}\label{eq:defaijalt}
A_{i,j}=\sigma_{i}^{-1}\cdots \sigma_{j-2}^{-1}\sigma_{j-1}^{2} \sigma_{j-2}\cdots \sigma_{i}.
\end{equation}
It follows from the presentation of $P_{n}$ given  in~\cite{Ha} that $P_n/[P_n,P_n]$ is isomorphic to $\Z^{n(n-1)/2}$, and that a basis is given by the $A_{i,j}$, where $1\leq i<j\leq n$, and where by abuse of notation, the $[P_n, P_n]$-coset of $A_{i,j}$ will also be denoted by $A_{i,j}$. Using \req{sespn}, we obtain the following short exact sequence:
\begin{equation}\label{eq:sespnquot}
1 \to  P_n/[P_n,P_n] \to  B_n/[P_n,P_n] \stackrel{\overline{\sigma}}{\to} \sn \to 1,
\end{equation}
where $\map{\overline{\sigma}}{B_n/[P_n,P_n]}[\sn]$ is the homomorphism induced by $\sigma$. This short exact may also be found in~\cite[Proposition~3.2]{PS}.

Since $B_1$ is the trivial group and $B_2/[P_2, P_2]\cong \Z$, we shall suppose in most of this paper that $n\geq 3$. 
We shall be interested in the action by conjugation of $B_n$ on $P_n$ and on $P_n/[P_n, P_n]$. Recall from \cite[Lemma~3.1]{LW1} (see also \cite[Proposition~3.7, Chapter 3]{KM}) that for all $1\leq k\leq n-1$ and for all $1\leq i<j\leq n$,
\begin{equation*}
\sigma_kA_{i,j}\sigma_k^{-1}=
\begin{cases}
A_{i,j} & \text{if $k\neq i-1, i, j-1, j$}\\
A_{i,k+1} & \text{if $j=k$}\\
A_{i,k+1}^{-1}A_{i,k}A_{i,k+1} & \text{if $j=k+1$ and $i<k$}\\
A_{k,k+1} & \text{if $j=k+1$ and $i=k$}\\
A_{i+1,j} & \text{if $i=k<j-1$}\\
A_{k+1,j}^{-1}A_{k,j}A_{k+1,j} & \text{if $i=k+1$}.
\end{cases}
\end{equation*}
So the induced action of $B_{n}$ on $P_n/[P_n, P_n]$ is given by
\begin{equation}\label{eq:conjugAij2}
\sigma_kA_{i,j}\sigma_k^{-1}=
\begin{cases}
A_{i,j} & \text{if $k\neq i-1, i, j-1, j$}\\
A_{i,k+1} & \text{if $j=k$}\\
A_{i,k} & \text{if $j=k+1$ and $i<k$}\\
A_{k,k+1} & \text{if $j=k+1$ and $i=k$}\\
A_{i+1,j} & \text{if $i=k<j-1$}\\
A_{k,j} & \text{if $i=k+1$}.
\end{cases}
\end{equation}
A study of this action now allows us to prove \repr{cryst}. 


\begin{proof}[Proof of \repr{cryst}]
Suppose first that $n=2$. Since $B_2=\Z$ and $[P_2, P_2]=1$, we obtain $B_2/[P_2, P_2]=B_2\cong \Z$, and thus the group $B_2/[P_2, P_2]$ is crystallographic. So assume that $n\geq 3$, and consider the short exact sequence~\reqref{sespnquot}. We shall show that the induced action $\map{\phi}{\sn}[\operatorname{Aut}(\Z^{n(n-1)/2})]$ is injective, from which it will follow that the group $B_{n}/[P_{n},P_{n}]$ is crystallographic. Using \req{conjugAij2}, if $1\leq i<j\leq n$, the automorphisms induced by the elements $\sigma_1$ and $\sigma_2$ of $B_{n}$ are given by:
\begin{equation*}
\sigma_1 A_{i,j} \sigma_1^{-1} = 
\begin{cases}
A_{2,j}  & \text{if $i=1$ and $j\geq 3$}\\
A_{1,j}  & \text{if $i=2$ and $j\geq 3$}\\
A_{i,j}  & \text{otherwise}
\end{cases}
\quad\text{and}\quad
\sigma_2 A_{i,j} \sigma_2^{-1} = 
\begin{cases}
A_{3,j}  & \text{if $i=2$ and $j\geq 4$}\\
A_{2,j}  & \text{if $i=3$ and $j\geq 4$}\\
A_{1,3}  & \text{if $i=1$ and $j=2$}\\
A_{1,2}  & \text{if $i=1$ and $ j=3$}\\
A_{i,j}  & \text{otherwise}.
\end{cases}
\end{equation*}
These automorphisms are distinct and non trivial, so the image $\im{\phi}$ of $\phi$ possesses at least three elements. Applying the First Isomorphism Theorem, the kernel $\ker{\phi}$ of $\phi$ is thus a normal subgroup of $\sn$ whose order is bounded above by $n!/3$. If $n\neq 4$, the only normal subgroups of $\sn$ are the trivial subgroup, the alternating subgroup $\an$ and $\sn$ itself, from which we conclude that $\ker{\phi}$ is trivial as required. Now suppose that $n=4$. The same argument applies, but additionally, $\ker{\phi}$ may be isomorphic to the Klein group $\Z_2\oplus \Z_2$, in which case $\im{\phi}$ is of order $6$. But by \req{conjugAij2}, the action of $\sigma_3\sigma_2\sigma_1$ on the basis elements of $P_4 / [P_4, P_4]$ is given by:
\begin{equation*}
\text{$A_{1,2}\mapsto A_{1,4}\mapsto A_{3,4}\mapsto A_{2,3}\mapsto A_{1,2}$ and $A_{1,3}\mapsto A_{2,4}\mapsto A_{1,3}$.}
\end{equation*}
Thus $\phi(\sigma_3\sigma_2\sigma_1)$ is an element of $\im{\phi}$ of order $4$, so $\im{\phi}$ cannot be of order $6$. Once more we see that $\ker{\phi}$ is trivial, and thus the associated integral representation is faithful for all $n\geq 3$.
It then follows from \relem{DefC2} that the group $B_n/[P_n,P_n]$ is crystallographic. 
\end{proof}

Using~\repr{cryst} and~\reco{CSG}, we may produce other crystallographic groups as follows. 
Let  $H$ be a subgroup of $\sn$, and consider the following short exact sequence: 
\begin{equation}\label{eq:sesholo}
1 \to \frac{P_n}{[P_n, P_n]} \to \widetilde{H}_n \stackrel{\overline{\sigma}}{\to} H \to 1
\end{equation}
induced by that of \req{sespnquot}, where $\widetilde{H}_n$ is defined by: 
\begin{equation}\label{eq:BCG3}
\widetilde{H}_n=\frac{\sigma^{-1}(H)}{[P_n, P_n]}.
\end{equation}

The following corollary is then a consequence of \reco{CSG} and \repr{cryst}.

\begin{cor}\label{cor:CorolBCG3}
Let $n\geq 3$, and let $H$ be a subgroup of $\sn$. Then the group $\widetilde{H}_n$ defined by \req{BCG3}
is a crystallographic group of dimension $n(n-1)/2$ with holonomy group $H$.
\end{cor}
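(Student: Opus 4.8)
The plan is to deduce this directly from \repr{cryst} together with \relem{DefC2}, or equivalently to quote \reco{CSG}. The key observation is that $\widetilde{H}_n$, as defined by \req{BCG3}, is nothing other than the preimage $\overline{\sigma}^{-1}(H)$ of $H$ under the homomorphism $\map{\overline{\sigma}}{B_n/[P_n,P_n]}[\sn]$ of \req{sespnquot}: indeed $[P_n,P_n]\subseteq P_n=\ker{\sigma}\subseteq \sigma^{-1}(H)$, so $\sigma^{-1}(H)/[P_n,P_n]=\overline{\sigma}^{-1}(H)$. Restricting the short exact sequence~\reqref{sespnquot} over the subgroup $H$ therefore yields precisely the short exact sequence~\reqref{sesholo}, whose kernel is $P_n/[P_n,P_n]\cong \Z^{n(n-1)/2}$.

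Next I would verify the two hypotheses of \relem{DefC2} for the sequence~\reqref{sesholo}. Condition~(\ref{it:DefC2a}) is immediate: $H$ is finite, being a subgroup of $\sn$. For condition~(\ref{it:DefC2b}), I must check that the integral representation $\map{\Theta}{H}[\operatorname{Aut}(\Z^{n(n-1)/2})]$ induced by conjugation on $P_n/[P_n,P_n]$ is faithful. But $\Theta$ is exactly the restriction to $H$ of the representation $\map{\phi}{\sn}[\operatorname{Aut}(\Z^{n(n-1)/2})]$ that was shown to be injective in the proof of \repr{cryst}; since the restriction of an injective homomorphism to a subgroup remains injective, $\Theta$ is faithful.

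By \relem{DefC2}, it then follows that $\widetilde{H}_n$ is a crystallographic group whose dimension is the rank $n(n-1)/2$ of the free Abelian kernel, and by the definition following \relem{DefC2}, its holonomy group is $H$ and its holonomy representation is $\Theta$. (Alternatively, this is just \reco{CSG} applied to the crystallographic group $\Pi=B_n/[P_n,P_n]$ of dimension $n(n-1)/2$ and holonomy group $\sn$ provided by \repr{cryst}, together with the identification, made in the proof of that corollary, of the resulting crystallographic subgroup with $\zeta^{-1}(H)=\overline{\sigma}^{-1}(H)=\widetilde{H}_n$.)

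I do not expect any real obstacle here: all the work was done in establishing \repr{cryst}, \relem{DefC2} and \reco{CSG}. The only point that deserves an explicit word is the faithfulness of the restricted representation $\Theta$, and that is immediate from the injectivity of $\phi$.
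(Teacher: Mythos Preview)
Your proposal is correct and follows exactly the paper's approach: the paper simply states that this corollary is a consequence of \reco{CSG} and \repr{cryst}, and you have unpacked precisely that argument, identifying $\widetilde{H}_n$ with $\overline{\sigma}^{-1}(H)$ and verifying the hypotheses of \relem{DefC2} (in particular the faithfulness of the restricted representation). There is nothing to add.
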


Our next goal is to prove \reth{2TorCryst}, that the quotient groups $B_n/[P_n,P_n]$ do not have $2$-torsion.



\begin{proof}[Proof of \reth{2TorCryst}]
Let $n\geq 3$. Suppose on the contrary that there exists $\beta\in B_{n}$ whose $[P_n, P_n]$-coset, which we also denote by $\beta$, is of even order in $B_n/[P_n, P_n]$. By taking a power of $\beta$ if necessary, we may suppose that $\beta$ is of order $2$ in $B_n/[P_n, P_n]$. Since $P_n/[P_n, P_n]$ is torsion free, it follows that $\beta \in B_{n}\setminus P_{n}$. Conjugating $\beta$ by an element of $B_{n}$ if necessary, we may suppose that $\sigma(\beta)=(1,2)(3,4)\cdots(k,k+1)$, where $1\leq k \leq n-1$ and $k$ is odd. Thus $\beta^{2}\in P_{n}$. Let $\alpha=\sigma_1\sigma_3\cdots\sigma_{k-2}\sigma_k$. Then $\sigma(\beta)=\sigma(\alpha)$, thus $N=\beta \alpha^{-1}$ belongs to $P_{n}$, and
%
%
so:
\begin{equation}\label{eq:Eqn2Tor}
\beta^{2}= (N \alpha)^{2}=N\ldotp \alpha N\alpha^{-1}\ldotp \alpha^2	
\end{equation}
in $P_n/[P_n, P_n]$ because $\beta^{2}\in P_{n}$. Further, $\alpha^2=A_{1,2}A_{3,4}\cdots A_{k,k+1}$, 
$\alpha A_{1,2} \alpha^{-1}= A_{1,2}$ by \req{conjugAij2}, and if $1\leq r<s\leq n$ then by \req{conjugAij2}, $\alpha A_{r,s} \alpha^{-1} = A_{1,2}$ in $B_n/[P_n, P_n]$ if and only if $(r,s)=(1,2)$. In particular, if we express $N$ (considered as an element of $P_{n}/[P_{n},P_{n}]$) using the basis $\brak{A_{i,j}}_{1\leq i<j\leq n}$, and if $r$ is the coefficient of $A_{1,2}$ in this expression then the coefficient of $A_{1,2}$ in the expression for $\beta^{2}$ in \req{Eqn2Tor} is equal to $2r+1$, which contradicts the fact that $\beta^{2}$ is trivial in $P_{n}/[P_{n},P_{n}]$, and the result follows.
%
%
%
%
%
%
%
%
\end{proof}


\begin{rems}\label{rem:restrict}
Let $n\geq 3$.
\begin{enumerate}
\item \reth{2TorCryst} generalises~\cite[Proposition~3.6]{PS}, where it is shown that there is no element $B_n/[P_n, P_n]$ of order two whose image under $\overline{\sigma}$ is the transposition $(1,2)$.
\item \reth{2TorCryst} implies that any finite-order subgroup of $B_n/[P_n, P_n]$ is of odd order.
\item\label{it:restrictc} Applying \repr{cryst}, \reth{2TorCryst} and \relem{eleconj} to the short exact sequence~\reqref{sespnquot}, the restriction $\map{\overline{\sigma}\left\lvert_{K}\right.}{K}[\overline{\sigma}(K)]$ of $\overline{\sigma}$ to any finite subgroup $K$ of $B_n/[P_n, P_n]$ is an isomorphism. In particular, the set of isomorphism classes of the finite subgroups of $B_n/[P_n, P_n]$ is contained in the set of isomorphism classes of the odd-order subgroups of $\sn$.
\end{enumerate}
\end{rems}

As we shall now see, by choosing $H$ appropriately, we may use \reco{CorolBCG3} to construct
Bieberbach groups of
dimension $n(n-1)/2$ in $B_n/[P_n, P_n]$. In \reth{TeoAKBraids}, we will give a statement for $B_n/[P_n, P_n]$ analogous to that of \reth{TeoAK} in the case that the holonomy group is a finite $2$-group.


\begin{lem}\label{lem:LemAK}
Let $n\geq 3$, and let $H$ be a $2$-subgroup of  $\sn$. Then the group $\widetilde{H}_n$ given by \req{BCG3} is a Bieberbach group of dimension $n(n-1)/2$.
\end{lem}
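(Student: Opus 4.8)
The plan is to show that $\widetilde{H}_n$ is torsion free and then invoke \reco{CorolBCG3} to conclude that it is a Bieberbach group of dimension $n(n-1)/2$. By \reco{CorolBCG3}, $\widetilde{H}_n$ is already a crystallographic group of dimension $n(n-1)/2$ with holonomy group $H$, so by \defn{DefCristGeo} it remains only to prove that $\widetilde{H}_n$ has no non-trivial element of finite order. So suppose $x\in \widetilde{H}_n$ is a non-trivial element of finite order. Since $\widetilde{H}_n$ sits in the short exact sequence~\reqref{sesholo} with free Abelian kernel $P_n/[P_n,P_n]$, the restriction of $\overline{\sigma}$ to the cyclic group $\ang{x}$ is injective (the kernel being torsion free), so $\overline{\sigma}(x)$ is a non-trivial element of $H$ of order equal to that of $x$.

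The key point is then a parity argument. By hypothesis $H$ is a $2$-group, so $\overline{\sigma}(x)$, and hence $x$, has order a power of $2$; replacing $x$ by a suitable power we may assume $x$ has order exactly $2$. But then $x$ is an element of order $2$ in $B_n/[P_n,P_n]$ (it lies in $\widetilde{H}_n\subseteq B_n/[P_n,P_n]$), which directly contradicts \reth{2TorCryst}, which asserts that $B_n/[P_n,P_n]$ has no element of even order. Therefore no such $x$ exists, $\widetilde{H}_n$ is torsion free, and being crystallographic of dimension $n(n-1)/2$ it is a Bieberbach group of that dimension.

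There is essentially no obstacle here: the lemma is a clean corollary of the two substantial results already in hand, namely \reco{CorolBCG3} (which gives the crystallographic structure and the dimension) and \reth{2TorCryst} (which rules out $2$-torsion in the ambient group $B_n/[P_n,P_n]$, and a fortiori in any subgroup). The only thing to be careful about is the reduction from ``finite order'' to ``order $2$'': one uses that $H$ is a $2$-group to see the order of $x$ is a power of $2$, and then passes to the appropriate power. The fact that $\widetilde{H}_n$ is a genuine subgroup of $B_n/[P_n,P_n]$ (immediate from its definition~\reqref{BCG3} as $\sigma^{-1}(H)/[P_n,P_n]$) is what lets \reth{2TorCryst} be applied to it.
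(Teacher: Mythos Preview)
Your proof is correct and follows essentially the same approach as the paper: use \reco{CorolBCG3} to get the crystallographic structure and dimension, then observe that any non-trivial torsion element would have order a power of $2$ (since the torsion-free kernel forces $\overline{\sigma}$ to preserve the order of torsion elements and $H$ is a $2$-group), contradicting \reth{2TorCryst}. The only cosmetic difference is that you reduce to an element of order exactly $2$, whereas the paper leaves the order as a positive power of $2$; since \reth{2TorCryst} already rules out all even orders, this extra reduction is harmless but unnecessary.
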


\begin{proof}
Let $n\geq 3$, and let $H$ be a $2$-subgroup of $\sn$. Consider the short exact sequences~\reqref{sespnquot} and~\reqref{sesholo}. By \reco{CorolBCG3}, $\widetilde{H}_n$ 
is a crystallographic group of dimension $n(n-1)/2$ with holonomy group $H$. Since the kernel is torsion free, $\overline{\sigma}$ respects the order of the torsion elements of $\widetilde{H}_n$~\cite[Lemma~13]{GG}. In particular, the fact that $H$ is a $2$-group implies that the order of any non-trivial torsion element of $\widetilde{H}_n$ is a positive power of $2$. On the other hand, by \reth{2TorCryst}, the group $B_n/[P_n, P_n]$ has no such torsion elements, so the same is true for $\widetilde{H}_n$. It follows that $\widetilde{H}_n$ is torsion free, hence it is a Bieberbach group of dimension $n(n-1)/2$ because $P_{n}/[P_{n},P_{n}]\cong \Z^{n(n-1)/2}$.
\end{proof}

\begin{rem}
It is not clear to us whether the family of groups that satisfy the conclusions of \reco{CorolBCG3} (resp.\ of \relem{LemAK}) contains all of the isomorphism classes of crystallographic (resp.\ Bieberbach) subgroups of $B_n/[P_n, P_n]$ of dimension $n(n-1)/2$.
\end{rem}

\relem{LemAK} enables us to give an alternative proof of \reth{TeoAK} in the case that the finite group in question is a $2$-group, and to estimate the dimension of the resulting flat manifold.

\begin{thm}\label{th:TeoAKBraids}
Let $H$ be a finite $2$-group. Then $H$ is the holonomy group of some flat manifold $M$. Further, the dimension of $M$ may be chosen to be $n(n-1)/2$, where $n$ is an integer for which $H$ embeds in the symmetric group $\sn$, and the fundamental group of $M$ is isomorphic to a subgroup of $B_n/[P_n, P_n]$.  
\end{thm}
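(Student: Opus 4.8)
The plan is to deduce \reth{TeoAKBraids} directly from the machinery already assembled in this section, with essentially no new computation. Given a finite $2$-group $H$, the first step is to embed it in a symmetric group: by Cayley's theorem $H$ embeds in $\sn$ where $n=\ord{H}$, and in fact any $n$ for which such an embedding exists will do; note that since $H$ is a non-trivial $2$-group we have $n\geq 2$, and if $n=2$ then $H\cong\Z_{2}$, which still embeds in $\sn[3]$, so we may always assume $n\geq 3$ as required by the lemmas of this section. Identify $H$ with its image, a $2$-subgroup of $\sn$.

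Next I would invoke \relem{LemAK}: the group $\widetilde{H}_n$ defined by \req{BCG3} is a Bieberbach group of dimension $n(n-1)/2$. By construction (see \req{BCG3}), $\widetilde{H}_n=\sigma^{-1}(H)/[P_n,P_n]$ is a subgroup of $B_n/[P_n,P_n]$, which gives the last assertion of the theorem about the fundamental group once we pass to the associated flat manifold. Now apply the correspondence between Bieberbach groups and fundamental groups of closed flat Riemannian manifolds recalled just after \reth{TeoAK}: there is a closed flat manifold $M$ with $\pi_{1}(M)\cong\widetilde{H}_n$, and $M$ has dimension $n(n-1)/2$ since this is the rank of the translation lattice $P_n/[P_n,P_n]\cong\Z^{n(n-1)/2}$. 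Finally, by~\cite[Corollary~3.4.6]{Wolf} the holonomy group of $M$ is isomorphic to the holonomy group of the Bieberbach group $\widetilde{H}_n$, which by \reco{CorolBCG3} (as already used inside the proof of \relem{LemAK}) is exactly $H$.

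Assembling these observations: $M$ is a flat manifold of dimension $n(n-1)/2$ with holonomy group $H$ and $\pi_{1}(M)\cong\widetilde{H}_n\leq B_n/[P_n,P_n]$, which is precisely the statement. I do not expect any genuine obstacle here, since all the substantive work — showing $B_n/[P_n,P_n]$ is crystallographic (\repr{cryst}), has no $2$-torsion (\reth{2TorCryst}), and hence that $\widetilde{H}_n$ is torsion free whenever $H$ is a $2$-group (\relem{LemAK}) — has already been carried out. The only point requiring a small remark is the reduction to $n\geq 3$ in the degenerate case $H\cong\Z_{2}$, where one simply enlarges the ambient symmetric group; and one should perhaps comment that the dimension $n(n-1)/2$ so obtained is typically far from optimal, the content of the theorem being merely that \emph{some} such $n$ works and that it can be read off from a permutation representation of $H$.
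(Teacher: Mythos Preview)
Your proof is correct and follows essentially the same route as the paper: embed $H$ in $\sn$ via Cayley's theorem (arranging $n\geq 3$), apply \relem{LemAK} to obtain the Bieberbach group $\widetilde{H}_n\leq B_n/[P_n,P_n]$ of dimension $n(n-1)/2$, and then invoke the first Bieberbach theorem to produce the flat manifold $M$ with $\pi_{1}(M)\cong\widetilde{H}_n$ and holonomy $H$. Your extra care with the degenerate case $n<3$ and the remark on non-optimality of the dimension are welcome but do not depart from the paper's argument.
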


\begin{proof}
Let $H$ be a finite $2$-group. Cayley's Theorem implies that there exists an integer $n\geq 3$ such that $H$ is isomorphic to a subgroup of $\sn$. From \relem{LemAK}, $\widetilde{H}_n$ is a Bieberbach group  of dimension $n(n-1)/2$ with holonomy group $H$ and is a subgroup of $B_n/[P_n, P_n]$. By the first Bieberbach Theorem, there exists a flat manifold $M$ of dimension $n(n-1)/2$ with holonomy group $H$ such that $\pi_1(M)=\widetilde{H}_n$ (see~\cite[Theorem~2.1.1]{Dekimpe} and the paragraph that follows it).
\end{proof}

For a given finite group $H$, it is natural to ask what is the minimal dimension of a flat manifold whose holonomy group is $H$. \reth{TeoAKBraids}
provides an upper bound for this minimal dimension when $H$ is a $2$-group. This upper bound is not sharp in general, for example if $H=\Z_2$.


\section{The torsion of the group $B_n/[P_n, P_n]$ }\label{sec:SecBCG}

Let $n\geq 3$. In this section we study the torsion elements of the group $B_n/[P_n, P_n]$. The main aim is to show that if $\theta\in \sn$ is of odd order $r$ then there exists $\beta\in B_{n}$ whose $[P_n, P_n]$-coset projects to $\theta$ in $\sn$ and is of order $r$ in $B_n/[P_n, P_n]$  (see \reco{torsion}). We begin by showing that if $r$ is an odd number such that $\sn$ possesses an element of order $r$ then $B_n/[P_n, P_n]$ also has an element of order $r$. By abuse of notation let $\sigma_k=q_n(\sigma_k)$, and let $A_{i,j}=q_n(A_{i,j})$, where 
 $\map{q_n}{B_n}[B_n/[P_n, P_n]]$ is the natural projection. 

\begin{prop}\label{prop:Proalpha0n} Let  $n\geq 3$, let $1\leq i<j\leq r\leq n$, and let $\alpha_{0,r}=\sigma_1\sigma_2\cdots\sigma_{r-1}\in B_n/[P_n, P_n]$.  
The following relations hold in $B_n/[P_n, P_n]$:
\begin{numcases}
{\alpha_{0,r}  A_{i,j} \alpha_{0,r}^{-1}=}
A_{i+1, j+1} & \text{\textnormal{if $j\leq r-1$}}\label{eq:alphaconj1}\\
A_{1, i+1} & \text{\textnormal{if $j=r$.}}\label{eq:alphaconj2}
\end{numcases}
\end{prop}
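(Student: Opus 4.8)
The plan is to compute the conjugate $\alpha_{0,r} A_{i,j}\alpha_{0,r}^{-1}$ by iterating the single-generator conjugation formula \reqref{conjugAij2} one letter at a time, since $\alpha_{0,r}=\sigma_1\sigma_2\cdots\sigma_{r-1}$ is a product of consecutive generators. Concretely, writing $\alpha_{0,r}A_{i,j}\alpha_{0,r}^{-1}=\sigma_1(\sigma_2(\cdots(\sigma_{r-1}A_{i,j}\sigma_{r-1}^{-1})\cdots)\sigma_2^{-1})\sigma_1^{-1}$, I would apply \reqref{conjugAij2} repeatedly, tracking how the index pair $(i,j)$ evolves as we conjugate successively by $\sigma_{r-1},\sigma_{r-2},\dots,\sigma_1$. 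It is cleaner to do it in the other order: observe that conjugation by $\alpha_{0,r}$ on the level of permutations sends the transposition $(i,j)$ to $(\rho(i),\rho(j))$, where $\rho=\sigma(\alpha_{0,r})=(1,2,\dots,r)$ is the $r$-cycle, so we already know the answer up to a sign and up to which of $A_{a,b}$ or $A_{b,a}^{\pm}$-type rewriting occurs; the real content is that no inversion or correction term appears modulo $[P_n,P_n]$.

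The main step I would carry out is a careful induction. Fix $i<j\le r$. Conjugating $A_{i,j}$ first by $\sigma_{j-1}$: if $j-1\neq i$, i.e. $j\geq i+2$, the relevant case of \reqref{conjugAij2} is "$j=k$" with $k=j-1$, giving $\sigma_{j-1}A_{i,j}\sigma_{j-1}^{-1}=A_{i,j-1}$... wait, that is not quite the direction I want; instead I should conjugate from the outside in. Let me instead argue by induction on $r-j$. \textbf{Base case} $j=r$: here $\alpha_{0,r}A_{i,r}\alpha_{0,r}^{-1}$. Since $A_{i,r}$ commutes (mod $[P_n,P_n]$) with $\sigma_k^{\pm}$ for $k$ not in $\{i-1,i,r-1,r\}$, and $r$ is the top index so $\sigma_r$ does not occur in $\alpha_{0,r}$, the only generators in $\alpha_{0,r}$ that act nontrivially are $\sigma_{i-1},\sigma_i,\sigma_{i+1},\dots,\sigma_{r-1}$; pushing these through using the cases "$i=k<j-1$" ($\mapsto A_{i+1,j}$), then the "$j=k+1, i<k$" case, etc., telescopes to $A_{1,i+1}$, which is \reqref{alphaconj2}. \textbf{Inductive step} $j\le r-1$: one writes $\alpha_{0,r}=\alpha_{0,j+1}\cdot(\sigma_{j+1}\cdots\sigma_{r-1})$; the tail $\sigma_{j+1}\cdots\sigma_{r-1}$ involves only indices $\ge j+1$, which are all $\ge j+1 > j$ and $\ne i,i-1$ (as $i<j$), hence fixes $A_{i,j}$ by the first case of \reqref{conjugAij2}; so $\alpha_{0,r}A_{i,j}\alpha_{0,r}^{-1}=\alpha_{0,j+1}A_{i,j}\alpha_{0,j+1}^{-1}$, and now $j$ is the top index of $\alpha_{0,j+1}$, reducing to a computation of the same shape as the base case but with $r$ replaced by $j+1$, yielding $A_{1,i+1}$ if $i+1=j$... no: I need to recheck — the base-case computation with top index $j+1$ and pair $(i,j)$ gives, by the "$j=r-1$" analogue, $A_{i+1,j+1}$, which is exactly \reqref{alphaconj1}.

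The main obstacle I anticipate is bookkeeping: \reqref{conjugAij2} has six cases, and at each of the $O(r)$ conjugation steps one must verify which case applies, taking care at the "boundary" indices $k=i-1$ and $k=i$ where the first index of the pair changes, and confirming that the $A_{i,k+1}^{-1}A_{i,k}A_{i,k+1}$ and $A_{k+1,j}^{-1}A_{k,j}A_{k+1,j}$ cases really do collapse to $A_{i,k}$ and $A_{k,j}$ respectively in $P_n/[P_n,P_n]$ — this is where being in the abelianised quotient is essential, and it is exactly what kills any would-be sign or conjugation correction. A clean way to package the whole argument, avoiding the six-case grind, is to note that $\overline{\sigma}$ identifies the $B_n/[P_n,P_n]$-action on $P_n/[P_n,P_n]\cong\Z^{n(n-1)/2}$ with an $\sn$-action (this action is well-defined precisely because we quotiented by $[P_n,P_n]$), that this $\sn$-action permutes the basis $\{A_{i,j}\}$ up to sign, and that by \reqref{conjugAij2} the generators $\sigma_k$ act on the $A_{i,j}$ with no sign; hence every element acts by an honest permutation of the basis, namely $A_{i,j}\mapsto A_{\rho(i),\rho(j)}$ where the indices are reordered if needed and $\rho$ is the underlying permutation. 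Applying this with $\rho=(1,2,\dots,r)$ gives $A_{i,j}\mapsto A_{i+1,j+1}$ when $j\le r-1$, and $A_{i,r}\mapsto A_{1,i+1}$ (reordering $i+1<1$? no, $1<i+1$ so it is $A_{1,i+1}$), which is the claim. I would present the telescoping induction as the rigorous argument and mention the $\sn$-action as the conceptual explanation.
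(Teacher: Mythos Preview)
Your approach is correct but differs from the paper's. For~\reqref{alphaconj1} the paper establishes the braid-group identity $\alpha_{0,r}\sigma_k\alpha_{0,r}^{-1}=\sigma_{k+1}$ for $1\le k\le r-2$ directly from relations~\reqref{artin1}--\reqref{artin2}, then substitutes into the definition~\reqref{defaij} of $A_{i,j}$; this step uses nothing about the quotient and holds already in $B_n$. For~\reqref{alphaconj2} the paper performs an explicit word manipulation of $\alpha_{0,r}A_{i,r}\alpha_{0,r}^{-1}$, repeatedly using commutativity in $P_n/[P_n,P_n]$ and the alternative expression~\reqref{defaijalt}. Your route --- iterating the single-generator formula~\reqref{conjugAij2} --- is more uniform across the two cases, and in its conceptual form (each $\sigma_k$ permutes the basis $\{A_{i,j}\}$ via $(k,k{+}1)$ on index pairs, hence any word acts by the composite permutation) gives a general principle that immediately handles conjugation by arbitrary elements, not just $\alpha_{0,r}$.

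Two small points to tidy up. Your induction structure does not quite close: after stripping the tail $\sigma_{j+1}\cdots\sigma_{r-1}$ you reduce to $\alpha_{0,j+1}A_{i,j}\alpha_{0,j+1}^{-1}$, where the second index $j$ is one \emph{less} than the top $j+1$, so you are not in your stated base case $j=r$; it is cleaner either to run the full telescoping directly (as you do in the examples), or to go straight to the $\sn$-action argument. Also, with the paper's left-to-right convention one has $\overline{\sigma}(\alpha_{0,r})=(1,r,r-1,\ldots,2)$, not $(1,2,\ldots,r)$; the action on the indices of the $A_{i,j}$ is by the inverse permutation, which is indeed $i\mapsto i+1$. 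Your final formulas are unaffected, but the identification $\rho=\overline{\sigma}(\alpha_{0,r})$ is off by an inverse.
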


\begin{proof}
We first prove \req{alphaconj1}. If $1\leq k\leq r-2$ then
\begin{align*}
\alpha_{0,r}  \sigma_{k}\alpha_{0,r}^{-1} &= \sigma_1\cdots\sigma_{r-1} \sigma_{k} \sigma_{r-1}^{-1} \cdots \sigma_{1}^{-1}= \sigma_1\cdots\sigma_{k} \sigma_{k+1} \sigma_{k} \sigma_{k+1}^{-1}\sigma_{k}^{-1} \cdots \sigma_{1}^{-1}\\
&= \sigma_1\cdots\sigma_{k-1} \sigma_{k+1} \sigma_{k} \sigma_{k+1}\sigma_{k+1}^{-1}\sigma_{k}^{-1} \sigma_{k-1}^{-1}\cdots \sigma_{1}^{-1}=\sigma_{k+1}.
\end{align*}
So if $1\leq i<j\leq r-1$ then $\alpha_{0,r}  A_{i,j} \alpha_{0,r}^{-1}=A_{i+1, j+1}$ by \req{defaij}, which proves \req{alphaconj1}. So suppose that $j=r$. Let $\gamma=\sigma_{i+1} \cdots \sigma_{r-2} \sigma_{r-1}^{2} \sigma_{r-2}\cdots \sigma_{i+1}$. Since $\gamma\in P_n/[P_n, P_n]$, we have $\alpha_{0,i+1}\gamma\alpha_{0,i+1}^{-1}\in P_n/[P_n, P_n]$, and thus $\alpha_{0,i+1}\gamma\alpha_{0,i+1}^{-1}$ and $\alpha_{0,i+1}\sigma_{i}^{2} \alpha_{0,i+1}^{-1}$ commute pairwise in $P_n/[P_n, P_n]$. Hence:
\begin{align*}
\alpha_{0,r}  A_{i,r} \alpha_{0,r}^{-1}&= \sigma_1\cdots\sigma_{r-1} \sigma_{r-1}\cdots \sigma_{i+1} \sigma_{i}^{2}\sigma_{i+1}^{-1} \cdots \sigma_{r-1}^{-1} \sigma_{r-1}^{-1} \cdots \sigma_{1}^{-1}= \alpha_{0,i+1} \gamma \sigma_{i}^{2}\gamma^{-1}\alpha_{0,i+1}^{-1}\\
&= (\alpha_{0,i+1}\gamma\alpha_{0,i+1}^{-1}) (\alpha_{0,i+1}\sigma_{i}^{2} \alpha_{0,i+1}^{-1})(\alpha_{0,i+1}\gamma^{-1}\alpha_{0,i+1}^{-1})=\alpha_{0,i+1}\sigma_{i}^{2} \alpha_{0,i+1}^{-1}\\
&= \sigma_1\cdots\sigma_{i} \sigma_{i}^{2} \sigma_{i}^{-1}\cdots \sigma_{1}^{-1}\\
&= (\sigma_1\cdots\sigma_{i}\sigma_{i}\cdots \sigma_{1}) (\sigma_1^{-1}\cdots\sigma_{i-1}^{-1} \sigma_{i}^{2} \sigma_{i-1}\cdots \sigma_{1}) (\sigma_1\cdots\sigma_{i}\sigma_{i}\cdots \sigma_{1})^{-1}\\
&= \sigma_1^{-1}\cdots\sigma_{i-1}^{-1} \sigma_{i}^{2} \sigma_{i-1}\cdots \sigma_{1}=A_{1,i+1}
\end{align*}
by \req{defaijalt}, since the three bracketed terms in the penultimate line belong to the quotient $P_n/[P_n, P_n]$ and so commute pairwise. This proves \req{alphaconj2}.
\end{proof}

We now apply \repr{Proalpha0n} to determine the orbits in $P_n/[P_n, P_n]$ for the action of conjugation by the element $\alpha_{0,n}\in B_n/[P_n, P_n]$. If $x\in \R$, $\lfloor x\rfloor$ shall denote the largest integer less than or equal to $x$.


\begin{cor}\label{cor:orbits}
Let $n\geq 3$. The set $\setl{A_{i,j}\in P_n/[P_n, P_n]}{1\leq i < j\leq n}$ is invariant under the action of conjugation by the element $\alpha_{0,n}$, and there are $\lfloor\frac{n-1}{2} \rfloor$ orbits each of length $n$ given by:
\begin{multline}\label{eq:orbitN}
A_{1,j+1} \stackrel{\alpha_{0,n}}{\longmapsto} A_{2,j+2} \stackrel{\alpha_{0,n}}{\longmapsto} \cdots \stackrel{\alpha_{0,n}}{\longmapsto} A_{n-j,n} \stackrel{\alpha_{0,n}}{\longmapsto} A_{1,n-j+1} \stackrel{\alpha_{0,n}}{\longmapsto} A_{2,n-j+2} \stackrel{\alpha_{0,n}}{\longmapsto} \cdots\\\stackrel{\alpha_{0,n}}{\longmapsto} A_{j,n} \stackrel{\alpha_{0,n}}{\longmapsto} A_{1,j+1}
\end{multline}
for $j=1,\ldots, \lfloor\frac{n-1}{2} \rfloor$.
%
If $n$ is even then there is an additional orbit of length $n/2$ given by:
\begin{equation*}
A_{1,\frac{n+2}{2}}\stackrel{\alpha_{0,n}}\longmapsto A_{2,\frac{n+4}{2}}\stackrel{\alpha_{0,n}}\longmapsto \cdots \stackrel{\alpha_{0,n}}\longmapsto A_{\frac{n}{2},n}\stackrel{\alpha_{0,n}}\longmapsto A_{1,\frac{n+2}{2}}.
\end{equation*}
\end{cor}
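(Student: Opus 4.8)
The plan is to compute the orbits of the set $\setl{A_{i,j}}{1\leq i<j\leq n}$ under iterated conjugation by $\alpha_{0,n}$, using \repr{Proalpha0n} with $r=n$ as the sole input. In that case the two cases of the proposition read: $\alpha_{0,n} A_{i,j} \alpha_{0,n}^{-1} = A_{i+1,j+1}$ whenever $j\leq n-1$, and $\alpha_{0,n} A_{i,n}\alpha_{0,n}^{-1} = A_{1,i+1}$. In particular the indicated set is invariant, which is the first assertion. To find the orbits, I would fix $j\in\{1,\ldots,\lfloor (n-1)/2\rfloor\}$ and start at $A_{1,j+1}$. Repeatedly applying the first rule (which simply adds $1$ to both indices) gives $A_{1,j+1}\mapsto A_{2,j+2}\mapsto\cdots\mapsto A_{n-j,n}$, at which point the second index has reached $n$; applying the second rule to $A_{n-j,n}$ sends it to $A_{1,n-j+1}$. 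From $A_{1,n-j+1}$ the first rule again applies until the second index reaches $n$, yielding $A_{1,n-j+1}\mapsto A_{2,n-j+2}\mapsto\cdots\mapsto A_{j,n}$, and then the second rule sends $A_{j,n}\mapsto A_{1,j+1}$, closing the cycle. Counting, the first arc has $n-j$ elements and the second has $j$ elements, for a total of $n$, so each such orbit has length $n$ exactly as claimed in \req{orbitN}.

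Next I would check that these $\lfloor (n-1)/2\rfloor$ orbits are pairwise disjoint and that, together with the possible extra orbit, they exhaust the whole set. The simplest bookkeeping is to observe that an element $A_{i,k}$ with $i<k$ lies in the orbit labelled by $j$ precisely when $\min(k-i,\, i-k+n) $ — equivalently the ``cyclic gap'' between $i$ and $k$ modulo $n$ — equals $j$; the two arcs in \req{orbitN} correspond to gaps $j$ and $n-j$ read in the two directions, and conjugation by $\alpha_{0,n}$ preserves this cyclic gap. Since for $1\leq i<k\leq n$ the quantity $\min(k-i, n-(k-i))$ takes the value $j$ for each $j\in\{1,\ldots,\lfloor n/2\rfloor\}$, and the number of pairs realising a given gap $j<n/2$ is exactly $n$ while the number realising gap $n/2$ (when $n$ is even) is exactly $n/2$, a count of $\binom{n}{2} = \sum$ confirms there is nothing left over. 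This also shows that when $n$ is even the pairs with $k-i=n/2$ form a single additional orbit; tracing it from $A_{1,(n+2)/2}$ with the same two rules gives $A_{1,(n+2)/2}\mapsto A_{2,(n+4)/2}\mapsto\cdots\mapsto A_{n/2,n}\mapsto A_{1,(n+2)/2}$, a cycle of length $n/2$, matching the displayed formula.

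I do not expect any genuine obstacle here: the computation is entirely driven by \req{alphaconj1} and \req{alphaconj2}, and the ``hard'' part is merely the organisational one of presenting the orbit structure cleanly and verifying the lengths and disjointness without error. The one place to be slightly careful is the boundary behaviour when $j = (n-1)/2$ for odd $n$ (so $n-j = (n+1)/2$) and the degenerate-looking arcs when $j$ is small; writing the orbit in the symmetric ``gap $j$ / gap $n-j$'' form in \req{orbitN} handles these uniformly, so one only needs to confirm that the first arc, of length $n-j$, together with the second, of length $j$, never overlap, which follows because the first consists of pairs $(a, a+j)$ and the second of pairs $(b, b + (n-j)) = (b, b-j \bmod n)$ with $a,b$ ranging over disjoint sets of starting values. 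Summing $n$ over $j=1,\ldots,\lfloor (n-1)/2\rfloor$ and adding $n/2$ in the even case recovers $n(n-1)/2$, completing the proof.
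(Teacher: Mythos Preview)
Your proposal is correct and is essentially the approach the paper has in mind: the corollary is stated without proof in the paper, as an immediate consequence of \repr{Proalpha0n}, and your argument simply makes explicit the orbit-tracing that the two rules \reqref{alphaconj1} and \reqref{alphaconj2} dictate. Your ``cyclic gap'' invariant is a clean way to organise the disjointness and counting, and matches the paper's later use of the reindexing \reqref{tableij}.
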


\reco{orbits} plays an important rôle in the proof of the following proposition, which states that
if $n$ is odd then the crystallographic group $B_n/[P_n, P_n]$ possesses elements of order $n$.

\begin{prop}\label{prop:Torcaonimpar}
If $n\geq 3$ is odd then $B_n/[P_n, P_n]$ possesses infinitely many elements of order $n$.
\end{prop}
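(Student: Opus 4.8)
\textbf{Proof proposal for \repr{Torcaonimpar}.}

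The plan is to exhibit an explicit one-parameter family of elements of $B_n/[P_n,P_n]$ of order exactly $n$, built from the element $\alpha_{0,n}=\sigma_1\sigma_2\cdots\sigma_{n-1}$ whose conjugation action on the basis $(A_{i,j})_{1\leq i<j\leq n}$ of $P_n/[P_n,P_n]$ was completely described in \reco{orbits}. The key point is that since $n$ is odd, $\overline{\sigma}(\alpha_{0,n})$ is the $n$-cycle $(1,2,\ldots,n)$, which has odd order $n$ in $\sn$; so any element $\beta\in B_n/[P_n,P_n]$ with $\overline{\sigma}(\beta)=\overline{\sigma}(\alpha_{0,n})$ satisfies $\beta^n\in P_n/[P_n,P_n]$, and such a $\beta$ has order $n$ precisely when $\beta^n$ is trivial in the free Abelian group $P_n/[P_n,P_n]$. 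We write $\beta = w\cdot\alpha_{0,n}$ with $w\in P_n/[P_n,P_n]$, so that
\begin{equation*}
\beta^n = w\cdot(\alpha_{0,n}w\alpha_{0,n}^{-1})\cdot(\alpha_{0,n}^2 w\alpha_{0,n}^{-2})\cdots(\alpha_{0,n}^{\,n-1}w\alpha_{0,n}^{-(n-1)})\cdot\alpha_{0,n}^{\,n}
\end{equation*}
in $P_n/[P_n,P_n]$, using that this group is Abelian. The task is then to choose $w$ so that this product vanishes.

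First I would compute $\alpha_{0,n}^{\,n}$ explicitly as an element of $P_n/[P_n,P_n]$: it is a product of the $A_{i,j}$'s, and by \reco{orbits} its coefficient vector is constant on each of the $\alpha_{0,n}$-orbits (since $\alpha_{0,n}^{\,n}$ commutes with $\alpha_{0,n}$ and its conjugation action permutes the coordinates within orbits, but actually $\alpha_{0,n}^{\,n}$ acts trivially on $P_n/[P_n,P_n]$ — because $\overline{\sigma}(\alpha_{0,n}^{\,n})=1$ would only give that it lies in $P_n/[P_n,P_n]$, so instead one uses that the length of every orbit divides $n$, forcing the induced permutation of coordinates by $\alpha_{0,n}$ to have order dividing... — more carefully, one checks directly from \reqref{alphaconj1}--\reqref{alphaconj2} that $\alpha_{0,n}^{\,n}$ is a specific positive word, e.g. a power of the central full-twist when restricted appropriately). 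Next, writing the sought $w=\sum_{i<j} x_{i,j}A_{i,j}$ additively, the condition $\beta^n=0$ becomes a linear system over $\Z$ in the unknowns $x_{i,j}$: grouping the $n$ conjugates $\alpha_{0,n}^{\,k}w\alpha_{0,n}^{-k}$ by the orbit structure of \reco{orbits}, within a single orbit $\mathcal{O}$ of length $n$ the sum $\sum_{k=0}^{n-1}\alpha_{0,n}^{\,k}w\alpha_{0,n}^{-k}$ contributes to the coordinate of each $A_{a,b}\in\mathcal{O}$ the same value $\sum_{A_{i,j}\in\mathcal{O}} x_{i,j}$. Thus the equation decouples across orbits, and on each orbit it reduces to a single scalar equation of the form $\bigl(\sum_{A_{i,j}\in\mathcal{O}}x_{i,j}\bigr) + (\text{coefficient of }\alpha_{0,n}^{\,n}\text{ on }\mathcal{O}) = 0$. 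Since $n$ is odd, the orbit sums are unconstrained integers and each such scalar equation has infinitely many integer solutions (fixing one coordinate freely on each orbit), so the system is solvable; in fact it has an $(\text{something})$-parameter family of solutions, yielding infinitely many distinct $w$, hence infinitely many distinct elements $\beta=w\cdot\alpha_{0,n}$ of order $n$.

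The main obstacle I anticipate is the bookkeeping in identifying $\alpha_{0,n}^{\,n}$ precisely as an element of $P_n/[P_n,P_n]$ and confirming that its coordinate vector is indeed constant on the $\alpha_{0,n}$-orbits described in \reco{orbits} (this constancy is what makes the linear system decouple and, crucially, solvable over $\Z$ rather than over $\Z/n$). One must be careful that the relevant ``defect'' on each orbit is an \emph{integer} that can be cancelled by a free integer orbit-sum — this is exactly where the hypothesis that $n$ is odd enters, via \reth{2TorCryst} or directly: if $n$ were even there could be a parity obstruction on the short orbit of length $n/2$. Once the decoupling and solvability are established, producing infinitely many solutions is immediate by leaving a free parameter on any one orbit, so the construction delivers infinitely many order-$n$ elements as claimed. (Alternatively, and perhaps more cleanly, one could observe that once a single $\beta_0$ of order $n$ is found, the whole coset $\{z\beta_0 : z\in\ker(1+\theta+\cdots+\theta^{n-1})\}$, where $\theta$ is the permutation action of $\alpha_{0,n}$ on $P_n/[P_n,P_n]$, consists of order-$n$ elements, and this kernel is infinite because $\theta$ has eigenvalue structure forcing $1+\theta+\cdots+\theta^{n-1}$ to have nontrivial kernel in $\Z^{n(n-1)/2}$; I would use whichever of these two routes makes the write-up shortest.)
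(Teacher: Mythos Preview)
Your approach is essentially the paper's: write $\beta=w\,\alpha_{0,n}$, expand $\beta^n$ using the orbit decomposition of \reco{orbits}, and solve the resulting decoupled linear system over $\Z$. The one point you leave open --- the ``bookkeeping in identifying $\alpha_{0,n}^{\,n}$'' --- is exactly what the paper supplies to make the argument go through cleanly: $(\sigma_1\cdots\sigma_{n-1})^n$ is the full-twist $\Delta_n^2=\prod_{1\le i<j\le n}A_{i,j}$, so in $P_n/[P_n,P_n]$ the element $\alpha_{0,n}^{\,n}$ has coefficient exactly $1$ on every $A_{i,j}$; with $n$ odd all orbits have length $n$, and the scalar equation on each orbit becomes $\sum x_{i,j}=-1$, which has infinitely many integer solutions.
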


\begin{proof}
Let $n\geq 3$ be odd. For $1\leq i\leq (n-1)/2$ and $1\leq j\leq n$, let
\begin{equation}\label{eq:tableij}
e_{i,j}=\begin{cases}
A_{j,i+j} & \text{if $i+j\leq n$}\\
A_{i+j-n,j} & \text{if $i+j>n$.}
\end{cases}
\end{equation}
By \req{orbitN}, the action by conjugation of $\alpha_{0,n}$ on the $e_{i,j}$ is given by:
\begin{equation}\label{eq:orbeij}
\text{$e_{i,1}\stackrel{\alpha_{0,n}}{\longmapsto} e_{i,2}\stackrel{\alpha_{0,n}}{\longmapsto} \cdots \stackrel{\alpha_{0,n}}{\longmapsto} e_{i,n-1}\stackrel{\alpha_{0,n}}{\longmapsto} e_{i,n}\stackrel{\alpha_{0,n}}{\longmapsto} e_{i,1}$ for $i=1,\ldots, (n-1)/2$.}
\end{equation}
In particular, the set $\brak{e_{i,j}}_{1\leq i\leq (n-1)/2, \; 1\leq j\leq n}$ is a basis for $P_n/[P_n, P_n]$. 
The full-twist braid of $B_{n}$ may be written as $(\sigma_{1}\cdots \sigma_{n-1})^{n}$, or alternatively as the product $\prod_{j=2}^{n} \left( \prod_{i=1}^{j}\, A_{i,j}\right)$. This expression contains each of the $A_{i,j}$ exactly once, and so $\displaystyle\alpha_{0,n}^{n}=\sum_{\substack{1\leq i\leq (n-1)/2 \\ 1\leq j\leq n}}\, e_{i,j}$ in $P_n/[P_n, P_n]$, using additive notation for this group. Let $ N\in P_n/[P_n, P_n]$, and for $1\leq i\leq (n-1)/2$ and $1\leq j\leq n$, let $a_{i,j}\in \Z$ be such that:
\begin{equation}\label{eq:sumN}
N=\sum_{\substack{1\leq i\leq (n-1)/2 \\ 1\leq j\leq n}}\, a_{i,j}e_{i,j}.
\end{equation}
It follows from \req{orbeij} that for all $k=0,1,\ldots,n-1$,
\begin{equation*}
\alpha_{0,n}^{k} N \alpha_{0,n}^{-k}=\sum_{\substack{1\leq i\leq (n-1)/2 \\ 1\leq j\leq n}}\, a_{i,j}e_{i,j+k},
\end{equation*}
where the second index of $e_{i,j+k}$ is taken modulo $n$. Hence:
\begin{align*}
( N\cdot\alpha_{0,n})^n & =  N + \alpha_{0,n} N\alpha_{0,n}^{-1} + \alpha_{0,n}^2 N\alpha_{0,n}^{-2} + \cdots + \alpha_{0,n}^{n-1} N\alpha_{0,n}^{1-n} + \alpha_{0,n}^n\\
& = \sum_{i=1}^{n(n-1)/2}\, \left( \sum_{j=1}^{n} a_{i,j}\right) \left( \sum_{j=1}^{n} e_{i,j}\right)+\sum_{\substack{1\leq i\leq (n-1)/2 \\ 1\leq j\leq n}}\, e_{i,j}\\
&= \sum_{i=1}^{n(n-1)/2}\, \left( \left(\sum_{j=1}^{n} a_{i,j}\right)+1\right) \left( \sum_{j=1}^{n} e_{i,j}\right).
\end{align*}
%
%
%
%
%
Thus $(N\cdot\alpha_{0,n})^n$ is equal to the trivial element of $P_n/[P_n, P_n]$ if and only if:
\begin{equation}\label{eq:SistEqns}
\text{$\left(\sum_{j=1}^{n} a_{i,j}\right)+1=0$ for all $i=1,\ldots, (n-1)/2$.}
\end{equation}
%
This system of equations admits infinitely many solutions in $\Z$. For each such solution, $N\alpha_{0,n}$ is of finite order, and its order divides $n$. On the other hand, since $\overline{\sigma}(N\alpha_{0,n})=(1,n,n-1,\ldots,2)$, the order of $N\alpha_{0,n}$ is at least $n$. We thus conclude that for any $N\in P_{n}/[P_{n},P_{n}]$ given by the expression~\reqref{sumN} whose coefficients satisfy the system~\reqref{SistEqns}, the element $N\alpha_{0,n}$ is of order $n$ in $B_{n}/[P_{n},P_{n}]$.
\end{proof}

As we shall now see, \repr{Torcaonimpar} implies part of \reth{oddtorsion}, namely that if $3\leq n\leq m$ and $n$ is odd then $B_m/[P_m, P_m]$ possesses elements of order $n$.

%

\begin{proof}[Proof of \reth{oddtorsion}]
Let $m$ and $n$ be integers such that $2\leq n\leq m$. 
\begin{enumerate}
\item By \req{defaij}, $\iota$ restricts to an injective homomorphism $\map{\iota\left\lvert_{P_{n}}\right.}{P_{n}}[P_{m}]$ given by $\iota\left\lvert_{P_{n}}\right.(A_{i,j})=A_{i,j}$ for all $1\leq i<j\leq n$. We wish to prove that the induced homomorphism $\map{\overline{\iota}}{B_n/[P_n, P_n]}[B_m/[P_m, P_m]]$ is injective.  Since $\brak{A_{i,j}}_{1\leq i<j\leq n}$ is a subset of the basis $\brak{A_{i,j}}_{1\leq i<j\leq m}$ of $P_m/[P_m, P_m]$, by regarding the $A_{i,j}$ as elements of the quotient $P_n/[P_n, P_n]$, we see that the restriction $\map{\overline{\iota}\left\lvert_{P_n/[P_n, P_n]}\right.}{P_n/[P_n, P_n]}[P_m/[P_m, P_m]]$ is also injective.
Using the short exact sequence~\reqref{sespnquot} and the fact that the homomorphism $\iota$ induces an inclusion of $\sn$ in $\sn[m]$, we obtain the following commutative diagram of short exact sequences:
\begin{equation*}
\begin{xy}*!C\xybox{%
\xymatrix{%
1 \ar[r] & \displaystyle\frac{P_n}{[P_n, P_n]} \ar[r] \ar[d]^{\overline{\iota}\left\lvert_{P_n/[P_n, P_n]}\right.} & \displaystyle\frac{B_n}{[P_n, P_n]} \ar[r] \ar[d]^{\overline{\iota}} &  \sn \ar[r] \ar@{^{(}->}[d] & 1\\
1 \ar[r] & \displaystyle\frac{P_m}{[P_m, P_m]} \ar[r] & \displaystyle\frac{B_m}{[P_m, P_m]} \ar[r] &  \sn[m] \ar[r] & 1.}}
\end{xy}
\end{equation*}
The injectivity of $\overline{\iota}$ is then a consequence of the $5$-Lemma.

\item Suppose that $3\leq n\leq m$ and that $n$ is odd. \repr{Torcaonimpar} implies that $B_n/[P_n, P_n]$ possesses elements of order $n$. By part~(\ref{it:oddtorsiona}), $\overline{\iota}$ is injective, and so $B_m/[P_m, P_m]$ also has elements of order $n$, which proves the first part of the statement. The second part follows from the construction given in the proof of \repr{Torcaonimpar}.

\item Let $n_{1},\ldots,n_{t}$ be odd integers greater than or equal to $3$ such that $\sum_{i=1}^{t}\, n_{i}\leq m$, let $\map{\sigma}{B_{\sum_{i=1}^{t}\, n_{i}}}[{\sn[\sum_{i=1}^{t}\, n_{i}]}]$ denote the usual homomorphism that to a braid associates its permutation, and let $B_{n_{1},\ldots,n_{t}}$ denote the corresponding mixed braid group, namely the preimage under $\sigma$ of the subgroup $\sn[n_{1}] \times \cdots \times \sn[n_{t}]$ of $\sn[\sum_{i=1}^{t}\, n_{i}]$. We first prove that the group $B_m/[P_m, P_m]$ possesses elements of order $\operatorname{lcm}(n_1,\ldots,n_{t})$. For $1\leq i\leq t$, let $\map{\phi_{i}}{B_{n_{i}}}[B_{n_{1},\ldots,n_{t}}]$ denote the embedding of $B_{n_{i}}$ into the $i\up{th}$ factor of $B_{n_{1},\ldots,n_{t}}$. Since $\phi_{i}([P_{n_{i}}, P_{n_{i}}])\subset \left[P_{\sum_{i=1}^{t}n_i}, P_{\sum_{i=1}^{t}n_i}\right]$, the homomorphism $\phi_{i}$ induces a homomorphism $\map{\overline{\phi_{i}}}{\displaystyle\frac{B_{n_i}}{[P_{n_i},P_{n_i}]}}[\frac{B_{n_1,n_2,\ldots,n_t}}{\left[P_{\sum_{i=1}^{t}n_i},P_{\sum_{i=1}^{t}n_i}\right]}]$. Now let $\map{\psi_{i}}{\displaystyle B_{n_{1},\ldots,n_{t}}}[\frac{B_{n_i}}{[P_{n_i},P_{n_i}]}]$ be the composition of the projection onto the $i\up{th}$ factor of $B_{n_{1},\ldots,n_{t}}$, followed by the canonical projection $\displaystyle B_{n_i}\to\frac{B_{n_i}}{[P_{n_i},P_{n_i}]}$. Under this composition, the normal subgroup $P_{\sum_{i=1}^{t}n_i}$ of $B_{n_{1},\ldots,n_{t}}$ is sent to $\displaystyle\frac{P_{n_i}}{[P_{n_i},P_{n_i}]}$,  hence the normal subgroup $[P_{\sum_{i=1}^{t}n_i},P_{\sum_{i=1}^{t}n_i}]$ of $B_{n_{1},\ldots,n_{t}}$ is sent to the trivial element of $\displaystyle\frac{B_{n_i}}{[P_{n_i},P_{n_i}]}$, from which it follows that $\psi_{i}$ induces a homomorphism $\map{\overline{\psi_{i}}}{\displaystyle \frac{B_{n_{1},\ldots,n_{t}}}{\left[P_{\sum_{i=1}^{t}n_i},P_{\sum_{i=1}^{t}n_i}\right]}}[\frac{B_{n_i}}{[P_{n_i},P_{n_i}]}]$. From the constructions of $\phi_{i}$ and $\psi_{i}$, we see that $\overline{\psi_{i}}\circ \overline{\phi_{i}}=\id_{B_{n_i}/[P_{n_i},P_{n_i}]}$ for all $1\leq i\leq t$, and so the composition
\begin{equation*}
\frac{B_{n_1}}{[P_{n_1},P_{n_1}]}  \times \cdots \times \frac{B_{n_t}}{[P_{n_t},P_{n_t}]} \!\xrightarrow{\overline{\phi_{1}} \!\times\cdots\times \overline{\phi_{t}}}
\frac{B_{n_1,n_2,\ldots,n_t}}{\Bigl[P_{\sum_{i=1}^{t}n_i},P_{\sum_{i=1}^{t}n_i}\Bigr]} \!\xrightarrow{\overline{\psi_{1}} \! \times\cdots\times \overline{\psi_{t}}} \frac{B_{n_1}}{[P_{n_1},P_{n_1}]}  \times \cdots \times \frac{B_{n_t}}{[P_{n_t},P_{n_t}]}
\end{equation*}
is the identity. Thus $\overline{\phi_{1}}\times\cdots\times\overline{\phi_{t}}$ is injective, and the composition
\begin{equation*}
\frac{B_{n_1}}{[P_{n_1},P_{n_1}]} \times \cdots \times \frac{B_{n_t}}{[P_{n_t},P_{n_t}]} \!\xrightarrow{\overline{\phi_{1}}\! \times\cdots\times \overline{\phi_{t}}} \frac{B_{n_1,n_2,\ldots,n_t}}{[P_{\sum_{i=1}^{t}n_i},P_{\sum_{i=1}^{t}n_i}]} \!\to \!\frac{B_{\sum_{i=1}^{t}n_i}}{[P_{\sum_{i=1}^{t}n_i},P_{\sum_{i=1}^{t}n_i}]} \!\to\! \frac{B_{m}}{[P_{m},P_{m}]},
\end{equation*}
which we denote by $\Phi$, is injective by part~(\ref{it:oddtorsiona}) and by the injectivity of the homomorphism $B_{n_{1},\ldots,n_{t}}\lhra B_{\sum_{i=1}^{t}\, n_{i}}$. For $1\leq i\leq t$, let $\displaystyle\gamma_i\in \frac{B_{n_i}}{[P_{n_i},P_{n_i}]}$ be an element of order $n_i$ whose permutation is an $n_{i}$-cycle; the existence of $\gamma_{i}$ is guaranteed by part~(\ref{it:oddtorsionb}). Then $\gamma=(\gamma_1, \ldots, \gamma_t)\in \displaystyle{\frac{B_{n_1}}{[P_{n_1},P_{n_1}]} \times \frac{B_{n_2}}{[P_{n_2},P_{n_2}]} \times \cdots \times \frac{B_{n_t}}{[P_{n_t},P_{n_t}]}}$ is of order $\operatorname{lcm}(n_{1},\ldots, n_t)$, and the injectivity of $\Phi$ implies that $\Phi(\gamma)\in \displaystyle\frac{B_{m}}{[P_{m},P_{m}]}$ is also of order $\operatorname{lcm}(n_{1},\ldots, n_t)$. The second part of the statement follows also.\qedhere
%
%
\end{enumerate}
\end{proof}

As a consequence, we are able to prove \reco{torsion}, which says that the torsion of $B_n/[P_n, P_n]$ is equal to the odd torsion of the symmetric group $\sn$, and that the map induced by $\overline{\sigma}$ from the set of finite cyclic subgroups of $B_n/[P_n, P_n]$ to the set of cyclic subgroups of $\sn$ of odd order is surjective.


\begin{proof}
Let $\beta\in B_n/[P_n, P_n]$ be a non-trivial element of finite order $r$. By \reth{2TorCryst}, $r$ is odd. \relem{eleconj} implies that $\overline{\sigma}(\beta)$ is also of order $r$, so the torsion of $B_n/[P_n, P_n]$ is contained in the odd torsion of the symmetric group $\sn$. Conversely, suppose that $\theta$ is an element of $\sn$ of odd order $r\geq 3$, and let $\theta=\theta_1 \theta_2 \cdots \theta_t$ be a product of disjoint non-trivial cycles, where $\theta_{i}$ is an $n_{i}$-cycle for all $i=1,\ldots,t$. Then $r=\operatorname{lcm}(n_1,\ldots,n_{t})$, the $n_{i}$ are odd and greater than or equal to $3$, and $\sum_{i=1}^{t}\, n_{i}\leq n$ since the $\theta_{i}$ are disjoint. By \reth{oddtorsion}(\ref{it:oddtorsionc}), $B_n/[P_n, P_n]$ possesses an element $\gamma$ of order $r$ whose permutation has cycle type $(n_1, \ldots,n_t)$. So $\overline{\sigma}(\gamma)$ is conjugate to $\theta$, and thus a suitable conjugate of $\gamma$ is an element of order $r$ whose permutation is equal to $\theta$. The last part of the statement follows in a straightforward manner. 
\end{proof}


\begin{rem}
In order to study the conjugacy classes of finite-order elements of the group $B_n/[P_n, P_n]$, we will describe some of these elements in more detail in \resec{SecBCGconj}.
\end{rem}

\section{A study of some crystallographic subgroups of dimension $3$ of $B_3/[P_3, P_3]$ }\label{sec:subgroupsb3}

As we saw in \resec{SecCG}, the group $B_3/[P_3, P_3]$ is crystallographic and has no $2$-torsion. In this section, we further analyse this quotient and we study some of the crystallographic subgroups of $B_3/[P_3, P_3]$ of dimension $3$, of the form $\overline{\sigma}^{-1}(H)$, where $H$ is a subgroup of $S_3$. In order to study these subgroups, it suffices to consider a representative of each conjugacy class of subgroups of $\sn[3]$. We shall also comment on some other subgroups of $B_3/[P_3, P_3]$.  

\begin{prop}\label{prop:apres}
Let $H$ be a subgroup of $\sn[3]$, and let $\widetilde{H}_3$ be given by \req{BCG3}.
\begin{enumerate}
\item\label{it:apresa} Let $H=\brak{1}$. The crystallographic group $\widetilde{H}_3$ admits a presentation whose generators are $ A_{1,2}, A_{1,3}, A_{2,3}$, with defining 
relations $[A_{1,2}, A_{1,3}]=1$, $[A_{1,2}, A_{2,3}]=1$ and $[A_{1,2}, A_{2,3}]=1$.
\item\label{it:apresb} Let $H=\ang{\left(1,3,2\right)}$. The crystallographic group $\widetilde{H}_3$ is normal in $B_3/[P_3,P_3]$ and admits a presentation given by:
\begin{itemize}
\item generators: $A_{1,2}, A_{2,3}, A_{1,3}, \alpha_{0,3}$, where $\alpha_{0,3}=\sigma_1\sigma_2\in B_3/[P_3,P_3]$.
\item relations: 
\begin{enumerate}
\item $[A_{1,2}, A_{1,3}]=1$, $[A_{1,2}, A_{2,3}]=1$, $[A_{1,3}, A_{1,3}]=1$.
\item $\alpha_{0,3}^3=\Delta_3^{2}=A_{1,2}A_{1,3}A_{2,3}$ ($\Delta_3^{2}$ is the class of the full-twist braid in $P_3/[P_3,P_3]$).
\item $\alpha_{0,3}A_{1,2}\alpha_{0,3}^{-1}=A_{2,3}$, $\alpha_{0,3}A_{1,3}\alpha_{0,3}^{-1}=A_{1,2}$, $\alpha_{0,3}A_{2,3}\alpha_{0,3}^{-1}=A_{1,3}$.
\end{enumerate}
\end{itemize}
The Abelianisation $\left(\widetilde{H}_3\right)_{\text{Ab}}$ of $\widetilde{H}_3$ is given by:
\begin{equation*}
\left(\widetilde{H}_3\right)_{\text{Ab}}=\setangr{A_{1,2}, \alpha_{0,3}}{[A_{1,2}, \alpha_{0,3}]=1, \alpha_{0,3}^3=A_{1,2}^3},
\end{equation*}
and is isomorphic to $\Z\oplus \Z_3$, where the factors are generated by $A_{1,2}$ and $A_{1,2}\alpha_{0,3}^{-1}$.	

\item\label{it:apresc} Let $H=\ang{\left(1, 2\right)}$. The crystallographic group $\widetilde{H}_3$ admits a presentation given by:
\begin{itemize}
\item generators: $A_{1,2}, A_{2,3}, A_{1,3}, \sigma_1$.
\item relations: 
\begin{enumerate}
\item $[A_{1,2}, A_{1,3}]=1$, $[A_{1,2}, A_{2,3}]=1$, $[A_{1,3}, A_{1,3}]=1$.
\item $\sigma_1^2=A_{1,2}$.
\item $\sigma_1 A_{1,2}\sigma_1^{-1}=A_{1,2}$, $\sigma_1 A_{1,3}\sigma_1^{-1}=A_{2,3}$, $\sigma_1 A_{2,3}\sigma_1^{-1}=A_{1,3}$.
\end{enumerate}
\end{itemize}
We have:
\begin{equation*}
\left(\widetilde{H}_3\right)_{\text{Ab}}= \setangr{A_{1,2}, A_{1,3}, \sigma_1}{[A_{1,2}, \sigma_1]=1, [A_{1,3}, \sigma_1]=1,  \sigma_1^2=A_{1,2}}\cong \Z\oplus \Z.
\end{equation*}

\item\label{it:apresd} Let $H_3=\sn[3]$. The crystallographic group $\widetilde{H}_3=B_3/[P_3, P_3]$ admits a presentation whose generators are $\sigma_1, \sigma_2$, with defining relations $\sigma_1\sigma_2\sigma_1=\sigma_2\sigma_1\sigma_2$ and $(\sigma_1^{-1}\sigma_2)^3=1$. We have
$\left(\widetilde{H}_3\right)_{\text{Ab}}=\ang{\sigma_1}\cong \Z$.
\end{enumerate}
\end{prop}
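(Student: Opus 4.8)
The plan is to compute, for each conjugacy class representative $H$ of a subgroup of $\sn[3]$, a presentation of $\widetilde{H}_3 = \overline{\sigma}^{-1}(H)$ by starting from the short exact sequence~\reqref{sesholo} and applying the standard procedure for obtaining a presentation of a group extension from presentations of the kernel and quotient (see, e.g., Johnson's method). The kernel $P_3/[P_3,P_3]$ is free abelian of rank $3$ with basis $A_{1,2}, A_{1,3}, A_{2,3}$, so its presentation contributes the three commutation relations among these generators (these are the relations labelled (i) in each part, allowing for the typos in the statement where some should read $[A_{1,3},A_{2,3}]=1$). The quotient $H$ contributes, for each generator, a lift to $\widetilde{H}_3$ together with a relation expressing the appropriate power of that lift as a word in the $A_{i,j}$, and for each relator of $H$ a relation expressing it as a word in the $A_{i,j}$; finally, conjugation of each $A_{i,j}$ by each lift must be rewritten in terms of the basis of the kernel, and these are exactly the relations given using \req{conjugAij2} and \repr{Proalpha0n}.

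Concretely, for part~(\ref{it:apresa}) with $H=\brak{1}$ there is nothing to lift and $\widetilde{H}_3 = P_3/[P_3,P_3] \cong \Z^3$, giving the stated presentation directly. For part~(\ref{it:apresb}), $H = \ang{(1,3,2)} \cong \Z_3$ is generated by $(1,3,2) = \overline{\sigma}(\alpha_{0,3})$ where $\alpha_{0,3}=\sigma_1\sigma_2$; normality of $\widetilde{H}_3$ in $B_3/[P_3,P_3]$ follows because $\ang{(1,3,2)}$ is the unique, hence normal, subgroup of order $3$ in $\sn[3]$, and we pull back along $\overline{\sigma}$. The single relator of $\Z_3$ forces $\alpha_{0,3}^3$ to lie in the kernel; the full-twist braid identity $\Delta_3^2 = (\sigma_1\sigma_2)^3 = \prod_{2\le j\le 3}\prod_{1\le i\le j}A_{i,j}$ (as used in the proof of~\repr{Torcaonimpar}) gives $\alpha_{0,3}^3 = A_{1,2}A_{1,3}A_{2,3}$ in the quotient, which is relation (ii). The conjugation relations (iii) are read off from~\req{conjugAij2} (equivalently~\repr{Proalpha0n} with $r=n=3$). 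Part~(\ref{it:apresc}) is analogous with $H=\ang{(1,2)} \cong \Z_2$ generated by $\overline{\sigma}(\sigma_1)$: the relator of $\Z_2$ gives $\sigma_1^2 \in$ kernel, and by~\req{defaij} $\sigma_1^2 = A_{1,2}$, which is relation (ii); the conjugation relations (iii) come from~\req{conjugAij2}. For part~(\ref{it:apresd}), $H = \sn[3] = B_3/[P_3,P_3]$ under $\overline{\sigma}$, so we want a presentation of $B_3/[P_3,P_3]$ itself: start from the Artin presentation~\reqref{artin1}--\reqref{artin2} of $B_3$ (just the one relation $\sigma_1\sigma_2\sigma_1 = \sigma_2\sigma_1\sigma_2$) and add the relations generating $[P_3,P_3]$ as a normal subgroup; since $P_3$ is free of rank $3$ (generated by $A_{1,2}, A_{1,3}$ and $\Delta_3^2$, or any free basis), $[P_3,P_3]$ is normally generated by a single commutator of two free generators, and a short computation using~\req{defaij} rewrites this commutator as $(\sigma_1^{-1}\sigma_2)^3$ up to conjugacy, yielding the stated presentation. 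For each part, the Abelianisation is then obtained by abelianising the presentation: impose commutativity, use the conjugation relations to identify generators (e.g. in (\ref{it:apresb}), $A_{1,2}=A_{1,3}=A_{2,3}$, so (ii) becomes $\alpha_{0,3}^3 = A_{1,2}^3$, and a change of basis to $A_{1,2}, A_{1,2}\alpha_{0,3}^{-1}$ exhibits $\Z\oplus\Z_3$; in (\ref{it:apresc}), $A_{1,3}=A_{2,3}$ and $\sigma_1^2=A_{1,2}$ allows elimination of $A_{1,2}$, leaving $\Z\oplus\Z$ on $A_{1,3}$ and $\sigma_1$; in (\ref{it:apresd}), the single relation abelianises to $\sigma_1 = \sigma_2$, giving $\Z$).

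The main obstacle is verifying the full-twist and commutator identities cleanly, i.e.\ checking that $(\sigma_1\sigma_2)^3 = A_{1,2}A_{1,3}A_{2,3}$ in $P_3/[P_3,P_3]$ and, for part~(\ref{it:apresd}), carefully identifying $[P_3,P_3]$ as the normal closure of a single element and rewriting that element as $(\sigma_1^{-1}\sigma_2)^3$ (or showing these have the same normal closure in $B_3$ modulo the Artin relation). The first identity can be handled by the explicit product decomposition of the full twist already invoked in~\resec{SecBCG}; for the second, one uses that $P_3 \cong \FF_2 \times \Z$ with the $\Z$-factor central (generated by $\Delta_3^2$), so $[P_3,P_3] = [\FF_2,\FF_2]$, which is normally generated in $\FF_2$ by a single commutator, and one then translates this generator into braid words via~\reqref{defaij} and simplifies modulo the relation $\sigma_1\sigma_2\sigma_1=\sigma_2\sigma_1\sigma_2$. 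Everything else is a routine application of~\relem{DefC2}, \reco{CorolBCG3} (guaranteeing that each $\widetilde{H}_3$ is crystallographic of dimension $3$ with holonomy $H$), the extension-presentation algorithm, and abelianisation.
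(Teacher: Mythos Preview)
Your proposal is correct and follows essentially the same route as the paper: Johnson's extension-presentation method for parts~(\ref{it:apresa})--(\ref{it:apresc}), normality in~(\ref{it:apresb}) pulled back from $\Z_3 \triangleleft \sn[3]$, and for~(\ref{it:apresd}) the identification of $[P_3,P_3]$ as the normal closure in $B_3$ of a single commutator rewritten as $(\sigma_1^{-1}\sigma_2)^3$, followed by routine abelianisation. The only difference is that the paper cites an external lemma for $[P_3,P_3]$ being the normal closure of $[A_{1,2},A_{2,3}]$, whereas you derive it from $P_3 \cong \FF_2 \times \Z$; your argument is fine once you note that the normal closure in $P_3$ coincides with the normal closure in $B_3$ because $[P_3,P_3]$ is already normal in $B_3$.
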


\begin{proof}
Part~(\ref{it:apresa}) follows from the fact that $\widetilde{H}_3=P_{3}/[P_3,P_3]$ if $H$ is trivial. The presentations given in parts~(\ref{it:apresb}) and~(\ref{it:apresc}) may be obtained by applying the method of presentations of group extensions given in~\cite[Section 10.2]{Johnson}. In part~(\ref{it:apresb}), the normality of $\widetilde{H}_3$ follows from that of $\Z_{3}$ in $\sn[3]$.

By~\cite[Lemma 4.3.9]{OcampoPhD}, the commutator subgroup $[P_3,P_3]$ is equal to the normal closure of $[A_{1,2},A_{2,3}]$ in $B_{3}$. Since $[A_{1,2},A_{2,3}]=(\sigma_1^{-1}\sigma_2)^3$ and $B_3=\setangr{\sigma_1, \sigma_2}{\sigma_1\sigma_2\sigma_1=\sigma_2\sigma_1\sigma_2}$, we thus obtain the presentation given in part~(\ref{it:apresd}). In each case, $\left(\widetilde{H}_3\right)_{\text{Ab}}$ is obtained in a straightforward manner from the presentation of $\widetilde{H}_3$.
\end{proof}

\begin{rem}
 The presentation of $B_{3}/[P_{3},P_{3}]$ of \repr{apres}(\ref{it:apresd}) also appeared in~\cite[Proposition~4.3.10]{OcampoPhD} and in~\cite[Proposition~3.9]{LW1}.
\end{rem}

\begin{thm}\label{th:XHW}
Let $H$ be a subgroup of $\sn[3]$, and let $\widetilde{H}_3$ be given by \req{BCG3}.
\begin{enumerate}
\item\label{it:xhwa} Let $H=\brak{1}$. Then $\widetilde{H}_3$ is isomorphic to the quotient $P_3/[P_3, P_3]$, which is isomorphic to $\Z^3$. The corresponding flat manifold is the $3$-torus.
	
\item\label{it:xhwb} Let $H=\ang{\left(1,2\right)}$. Then $\widetilde{H}_3$ is a Bieberbach group of dimension $3$ with holonomy group $\Z_2$. The corresponding flat Riemannian manifold is diffeomorphic to the non-orientable manifold $\mathscr{B}_2$ that appears in the classification of flat Riemannian $3$-manifolds given in~\cite[Corollary~3.5.10]{Wolf}.

\item\label{it:xhwc} Let $H=\ang{\left(1,3,2\right)}$. Then $\widetilde{H}_3$ is isomorphic to the semi-direct product $\Z^{3}\rtimes \Z_3$, where the action is given by the matrix 
 $\left(\begin{smallmatrix}
0 & 0 & 1 \\
1 & 0 & 0 \\
0 & 1 & 0
\end{smallmatrix}\right)$ with respect to the basis $(A_{1,2}, A_{2,3}, A_{1,3})$ of $P_{3}/[P_{3},P_{3}]$, this quotient being identified with $\Z^{3}$.
	
\end{enumerate}
\end{thm}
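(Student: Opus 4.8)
The proof of \reth{XHW} splits into the three choices of $H$, and I would dispatch parts~(\ref{it:xhwa}) and~(\ref{it:xhwc}) quickly, reserving the real work for the identification of the flat manifold in part~(\ref{it:xhwb}). Part~(\ref{it:xhwa}) is immediate from \repr{apres}(\ref{it:apresa}): when $H=\brak{1}$ one has $\widetilde{H}_3=\sigma^{-1}(\brak{1})/[P_3,P_3]=P_3/[P_3,P_3]$, which by \resec{SecCG} is free abelian of rank $3$, and the closed flat Riemannian manifold with fundamental group $\Z^{3}$ is the $3$-torus.

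For part~(\ref{it:xhwc}) I would use the short exact sequence~\reqref{sesholo} with $H=\ang{(1,3,2)}\cong\Z_{3}$ and exhibit a section. Specialising the computation in the proof of \repr{Torcaonimpar} to $n=3$ (where the defining system reduces to the single equation $a_{1,1}+a_{1,2}+a_{1,3}+1=0$), the element $\tau=A_{1,2}^{-1}\alpha_{0,3}$ satisfies $\tau^{3}=1$ in $B_3/[P_3,P_3]$; since $\overline{\sigma}(\tau)=(1,3,2)$ generates $H$, the subgroup $\ang{\tau}$ is a section of $\overline{\sigma}$, and hence $\widetilde{H}_3\cong\Z^{3}\rtimes\Z_{3}$. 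It then remains to compute the action, namely conjugation by $\tau$ on $P_3/[P_3,P_3]$: combining $\tau=A_{1,2}^{-1}\alpha_{0,3}$ with \repr{Proalpha0n} (or with \req{conjugAij2}) gives $\tau A_{1,2}\tau^{-1}=A_{2,3}$, $\tau A_{2,3}\tau^{-1}=A_{1,3}$ and $\tau A_{1,3}\tau^{-1}=A_{1,2}$, which in the ordered basis $(A_{1,2},A_{2,3},A_{1,3})$ of $P_{3}/[P_{3},P_{3}]$ is precisely the matrix $\left(\begin{smallmatrix}0&0&1\\1&0&0\\0&1&0\end{smallmatrix}\right)$ of the statement.

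Part~(\ref{it:xhwb}) is the substantial one. With $H=\ang{(1,2)}\cong\Z_{2}$, \relem{LemAK} and \reco{CorolBCG3} already tell us that $\widetilde{H}_3$ is a Bieberbach group of dimension $3$ with holonomy group $\Z_{2}$, so by the first Bieberbach theorem (\cite[Theorem~2.1.1]{Dekimpe}) it is the fundamental group of a closed flat $3$-manifold $M$, and $M$ is determined up to affine diffeomorphism by $\widetilde{H}_3$; the task is to locate $M$ in the classification \cite[Corollary~3.5.10]{Wolf}. First, $M$ is non-orientable: by \repr{apres}(\ref{it:apresc}) the generator $\sigma_{1}$ fixes $A_{1,2}$ and interchanges $A_{1,3}$ and $A_{2,3}$, so the holonomy representation is generated by $\left(\begin{smallmatrix}1&0&0\\0&0&1\\0&1&0\end{smallmatrix}\right)$, which has determinant $-1$, so its image is not contained in $\operatorname{SO}(3,\Z)$. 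Among the closed flat $3$-manifolds there are exactly two non-orientable ones with holonomy $\Z_{2}$, so it remains to tell $M$ apart from the other. Here I would use either of two equivalent invariants: first, $(\widetilde{H}_3)_{\text{Ab}}\cong\Z^{2}$ is torsion free by \repr{apres}(\ref{it:apresc}), whereas the first homology of the competing manifold contains $2$-torsion; second, and equivalently, the above order-$2$ holonomy matrix is not $\operatorname{GL}(3,\Z)$-conjugate to $\operatorname{diag}(1,1,-1)$, the holonomy matrix of the competing manifold — indeed the block $\left(\begin{smallmatrix}0&1\\1&0\end{smallmatrix}\right)$ is not $\operatorname{GL}(2,\Z)$-conjugate to $\operatorname{diag}(1,-1)$ since the span of its $(+1)$- and $(-1)$-eigenlattices is a sublattice of $\Z^{2}$ of index $2$. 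Matching this with \cite[Corollary~3.5.10]{Wolf} identifies $M$ with $\mathscr{B}_{2}$.

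The one genuinely delicate point is this final identification in part~(\ref{it:xhwb}): being certain that the chosen invariant really separates the two candidate non-orientable flat $3$-manifolds, and that the labelling matches \cite{Wolf}. The integral conjugacy class of the holonomy matrix — there being exactly two conjugacy classes of involutions of determinant $-1$ in $\operatorname{GL}(3,\Z)$, represented by $\operatorname{diag}(1,1,-1)$ and $\left(\begin{smallmatrix}1&0&0\\0&0&1\\0&1&0\end{smallmatrix}\right)$ — is the cleanest discriminator, with the abelianisation from \repr{apres}(\ref{it:apresc}) serving as an independent check.
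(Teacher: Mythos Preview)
Your proof is correct and follows essentially the same route as the paper's. Two remarks: in part~(\ref{it:xhwc}) your section element $\tau=A_{1,2}^{-1}\alpha_{0,3}=\sigma_{1}^{-2}\sigma_{1}\sigma_{2}=\sigma_{1}^{-1}\sigma_{2}$ is literally the element the paper uses, just written differently; and in part~(\ref{it:xhwb}) the paper distinguishes $\mathscr{B}_{1}$ from $\mathscr{B}_{2}$ solely via $H_{1}\cong\Z^{2}$ (your first invariant), without invoking the integral conjugacy class of the holonomy matrix.
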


\begin{proof}
Part~(\ref{it:xhwa}) is clear,
so let us prove part~(\ref{it:xhwb}). \reth{2TorCryst} implies that $B_3/[P_3, P_3]$ has no $2$-torsion, and so the subgroup $\widetilde{H}_3$ is a Bieberbach group of dimension $3$ by~\relem{LemAK}. Let $X$ be the flat Riemannian manifold uniquely determined by $\widetilde{H}_3$, so that $\pi_1(X)=\widetilde{H}_3$. The holonomy representation of $\widetilde{H}_3$ is a homomorphism of the form $\Z_2 \to \operatorname{Aut}(\Z^{3})$, where we identify $P_{3}/[P_{3},P_{3}]$ with $\Z^{3}$. Relative to the basis $(A_{1,2}, A_{1,3}, A_{2,3})$ of $P_{3}/[P_{3},P_{3}]$, by \req{conjugAij2}, the image of the generator of $\Z_{2}$ by this homomorphism is given by the matrix
$\left(\begin{smallmatrix}
1 & 0 & 0 \\
0 & 0 & 1 \\
0 & 1 & 0
\end{smallmatrix}\right)$ whose determinant is equal to $-1$ . Thus $X$ is a non-orientable flat Riemannian $3$-manifold with holonomy group $\Z_2$. Up to affine diffeomorphism, $X$ is one of the two manifolds $\mathscr{B}_1$ or $\mathscr{B}_2$ described in~\cite[Theorem~3.5.9]{Wolf}. Using the presentation of $\widetilde{H}_3$ given in \repr{apres}(\ref{it:apresb}) we have $H_1(X; \Z)\cong \Z^{2}$, 
and from the table in~\cite[Corollary~3.5.10]{Wolf}, we conclude that $X=\mathscr{B}_2$. 

Finally we prove part~(\ref{it:xhwc}). The following short exact sequence:
\begin{equation*}
1\to P_{3}/[P_{3},P_{3}] \to \widetilde{H}_3 \to H \to 1
\end{equation*}
admits a section given by sending the generator $\left(1,3,2\right)$ of $H$ to the element $\sigma_1^{-1}\sigma_2$ of $\widetilde{H}_3$, and so $\widetilde{H}_3$ is isomorphic to the semi-direct product of the form $\Z^{3}\rtimes \Z_3$. Relative to the basis $(A_{1,2}, A_{2,3}, A_{1,3})$ of $P_{3}/[P_{3},P_{3}]$, the matrix of the associated action is equal to 
 $\left(\begin{smallmatrix}
0 & 0 & 1 \\
1 & 0 & 0 \\
0 & 1 & 0
\end{smallmatrix}\right)$.
\end{proof}

\begin{rems}\mbox{}
\begin{enumerate}
\item The subgroup of $B_3/[P_3, P_3]$ generated by the class of the full-twist braid $A_{1,2}A_{1,3}A_{2,3}$, given by $(1,1,1)$ in terms of the basis $(A_{1,2}, A_{1,3}, A_{2,3})$ of $P_3/[P_3, P_3]$, is a normal subgroup of $B_3/[P_3, P_3]$. The associated quotient group admits the following presentation that is obtained from a presentation of $B_3/[P_3, P_3]$:
\begin{equation*}
\setangr{\sigma_1, \sigma_2}{\sigma_2\sigma_1 \sigma_2= \sigma_1\sigma_2\sigma_1,\,   
(\sigma_1^{-1}\sigma_2)^3=1,\,  A_{1,2}A_{1,3}A_{2,3}=1}.
\end{equation*}
The group $G_3^1$ given in the first theorem of~\cite[page~73]{Ly} is generated by the set $\brak{\alpha, \beta, \sigma, \rho}$, with relations: 
\begin{equation*}
[\alpha,\beta]=[\rho,\alpha]=\sigma^3=\rho^2=(\sigma\rho)^2=1, \ \sigma\alpha\sigma^{-1}=\alpha^{-1}\beta, \ \sigma\beta\sigma^{-1}=\alpha^{-1},\, \rho\beta\rho^{-1}=\alpha\beta^{-1}.
\end{equation*}
A routine calculation shows that the map that sends $A_{1,2}$ (resp.\ $A_{1,3}^{-1}$, $A_{1,3}\sigma_1$, $\sigma_{1}^{-1} \sigma_{2}$) to $\alpha$ (resp.\ $\beta$, $\rho$, $\sigma$) extends to an isomorphism of the two groups. 
\item   The group $B_3/[P_3, P_3]$ is the three-dimensional 
crystallographic group that appears as \texttt{5/4/1:SPGR:02} of~\cite[page~71]{BBNWS}, and that corresponds to \texttt{IT~161; OBT~1} in the international table~\cite{HL}.

\item Let $L$ be a crystallographic subgroup of $B_3/[P_3, P_3]$ of dimension $3$, and consider the subgroup $\overline{\sigma}(L)$ of $\sn[3]$. If $\overline{\sigma}(L)=\brak{\id}$ then clearly $L$ is isomorphic to $\Z^3$. If $\overline{\sigma}(L)=\ang{(1,2)}$ then $L$ is a Bieberbach group. If $\overline{\sigma}(L)=\ang{(1, 3, 2)}$ then the group $L$ may be Bieberbach or not, with holonomy $\Z_3$. For example, if $L$ is the subgroup generated by $\brak{\sigma_1^{-1}\sigma_2, A_{1,2}^{2}, A_{1,3}^{2}, A_{2,3}^{2}}$ then $L$ is a proper crystallographic subgroup of $\overline{\sigma}^{-1}(\ang{(1, 3, 2)})$ of dimension $3$ with holonomy $\Z_3$ and that admits torsion elements, $\sigma_1^{-1}\sigma_2$ for example. On the other hand, if $L$ is the subgroup generated by $\brak{A_{1,2}\sigma_1^{-1}\sigma_2, A_{1,2}^{3}, A_{1,3}^{3}, A_{2,3}^{3}}$ then $L$ is a proper subgroup of $\overline{\sigma}^{-1}(\ang{(1, 3, 2)})$, and is a Bieberbach group of dimension $3$ with holonomy $\Z_3$. To see this, let $L_{1}=L\cap \ker{\overline{\sigma}}= L\cap \ang{A_{1,2},A_{1,3}, A_{2,3}}$. Clearly $L_{1}$ is a free Abelian group, so is torsion free. Using \req{conjugAij2}, we see that $(A_{1,2}\sigma_1^{-1}\sigma_2)^{3}=A_{1,2} A_{1,3} A_{2,3}$, and since $\brak{(A_{1,2},A_{1,3}, A_{2,3})^{j}}_{j\in \brak{0,1,2}}$ is a set of coset representatives of $L_{1}$ in $L$, it follows that $L_{1}$ is generated by $\brak{A_{1,2} A_{1,3} A_{2,3}, A_{1,2}^{3}, A_{1,3}^{3}, A_{2,3}^{3}}$. Note then that $\brak{A_{1,2} A_{1,3} A_{2,3}, A_{1,3}^{3}, A_{2,3}^{3}}$ is a basis of $L_{1}$. Suppose that $w$ is a non-trivial torsion element of $L$. By \relem{eleconj}, $w$ must be of order $3$. Now $w\notin L_{1}$, so there exist $\theta\in L_{1}$ and $j\in \brak{1,2}$ such that $w=\theta (A_{1,2}\sigma_1^{-1}\sigma_2)^{j}$. Since $\theta \in L_{1}$, there exist $\lambda_{1}, \lambda_{2},\lambda_{3}\in \Z$ such that $\theta=(A_{1,2} A_{1,3} A_{2,3})^{\lambda_{1}} A_{1,3}^{3\lambda_{2}} A_{2,3}^{3\lambda_{3}}$, and hence:
\begin{equation*}
1=w^{3}= \theta (A_{1,2}\sigma_1^{-1}\sigma_2)^{j} \theta (A_{1,2}\sigma_1^{-1}\sigma_2)^{-j} \ldotp (A_{1,2}\sigma_1^{-1}\sigma_2)^{2j}  \theta (A_{1,2}\sigma_1^{-1}\sigma_2)^{-2j} \ldotp (A_{1,2}\sigma_1^{-1}\sigma_2)^{3j}. 
\end{equation*}
Using once more \req{conjugAij2}, the relation $(A_{1,2}\sigma_1^{-1}\sigma_2)^{3}=A_{1,2} A_{1,3} A_{2,3}$, and comparing the coefficients of $A_{1,2},A_{1,3}$ and $A_{2,3}$, we obtain the equality $3(\lambda_{1}+\lambda_{2}+\lambda_{3})+j=0$, which has no solution in $\Z$. It follows that $L$ is torsion free, and so is a Bieberbach group of dimension $3$ with holonomy $\Z_3$.

\item There is no Bieberbach subgroup of $B_3/[P_3, P_3]$ of dimension $3$ that projects to $\sn[3]$, since none of the ten flat Riemannian $3$-manifolds have fundamental group with holo\-nomy $\sn[3]$ (see~\cite[Theorems~3.5.5 and~3.5.9]{Wolf}).
\end{enumerate}
\end{rems}


\section{Conjugacy classes of finite-order elements of $B_n/[P_n, P_n]$}\label{sec:SecBCGconj}

In this section, we study the conjugacy classes of finite-order elements of $B_n/[P_n, P_n]$. The aim is to prove \reth{classconj}, which states that there is a bijection between the conjugacy classes of cyclic subgroups of odd order of $B_n/[P_n, P_n]$ and the set of conjugacy classes of cyclic subgroups of odd order of the symmetric group $\sn$.

We begin with an elementary fact about conjugacy classes that will help to simplify the study of our problem.   


\begin{lem}\label{lem:conj}
Let $\alpha, \beta\in B_n/[P_n, P_n]$ be two conjugate elements of finite order. Then $\overline{\sigma}(\alpha)$ and $\overline{\sigma}(\beta)$ are permutations of odd order and have the same cycle type.
\end{lem}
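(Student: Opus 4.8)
The plan is to prove \relem{conj} in two independent halves: that $\overline{\sigma}(\alpha)$ has odd order, and that $\overline{\sigma}(\alpha)$ and $\overline{\sigma}(\beta)$ have the same cycle type. The second half is essentially formal, so I would dispose of it first. Since $\alpha$ and $\beta$ are conjugate in $B_n/[P_n,P_n]$, say $\beta = \gamma\alpha\gamma^{-1}$, applying the homomorphism $\overline{\sigma}$ gives $\overline{\sigma}(\beta) = \overline{\sigma}(\gamma)\overline{\sigma}(\alpha)\overline{\sigma}(\gamma)^{-1}$, so $\overline{\sigma}(\alpha)$ and $\overline{\sigma}(\beta)$ are conjugate in $\sn$; conjugate permutations have the same cycle type, which is the standard fact about symmetric groups. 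This requires no input from the braid-theoretic structure at all.

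For the first half, the key point is \reth{2TorCryst} together with \relem{eleconj}. The element $\alpha$ has finite order, and by \repr{cryst} the group $B_n/[P_n,P_n]$ is crystallographic with the short exact sequence $1 \to P_n/[P_n,P_n] \to B_n/[P_n,P_n] \xrightarrow{\overline{\sigma}} \sn \to 1$ whose kernel $P_n/[P_n,P_n] \cong \Z^{n(n-1)/2}$ is torsion free. Hence \relem{eleconj}, applied to the homomorphism $\overline{\sigma}$ and the finite cyclic subgroup $K = \langle \alpha\rangle$, shows that $\overline{\sigma}$ restricts to an isomorphism $K \to \overline{\sigma}(K)$; in particular $\overline{\sigma}(\alpha)$ has the same (finite) order as $\alpha$. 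But by \reth{2TorCryst} the group $B_n/[P_n,P_n]$ has no element of even order, so the order of $\alpha$, and therefore the order of $\overline{\sigma}(\alpha)$, is odd. (If one prefers to avoid invoking $\alpha$ itself being torsion, one can note directly that an even-order permutation would pull back — via the splitting of the finite subgroup guaranteed by torsion-freeness of the kernel — to an even-order element of $B_n/[P_n,P_n]$, contradicting \reth{2TorCryst}; this is precisely the content of \rerem{restrict}(\ref{it:restrictc}).)

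I do not expect any serious obstacle here: the lemma is an immediate consequence of results already established, namely that the kernel of $\overline{\sigma}$ is free abelian (hence torsion free) and that $B_n/[P_n,P_n]$ has no $2$-torsion. The only mild subtlety is making sure the argument is phrased for a single element $\alpha$ rather than for a subgroup — but since $\langle\alpha\rangle$ is itself a finite subgroup whenever $\alpha$ has finite order, \relem{eleconj} applies verbatim. So the proof amounts to: conjugate permutations are conjugate in $\sn$ hence share cycle type; and $\overline{\sigma}$ preserves the order of finite-order elements (torsion-free kernel), which combined with \reth{2TorCryst} forces odd order.
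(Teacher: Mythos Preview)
Your proof is correct and follows essentially the same approach as the paper: conjugacy is preserved by $\overline{\sigma}$, giving equal cycle types, and \reth{2TorCryst} forces the order of $\alpha$ (hence of $\overline{\sigma}(\alpha)$) to be odd. The only minor difference is that you invoke \relem{eleconj} to conclude $\overline{\sigma}(\alpha)$ has the \emph{same} order as $\alpha$, whereas the paper only needs that the order of $\overline{\sigma}(\alpha)$ divides that of $\alpha$, which holds for any homomorphism; but this is a harmless overkill.
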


\begin{proof}
Since $\alpha, \beta$ are of finite order, their common order is odd by \reth{2TorCryst}. The fact that $\alpha$ and $\beta$ are conjugate in $B_n/[P_n, P_n]$ implies that the permutations $\overline{\sigma}(\alpha)$ and $\overline{\sigma}(\beta)$ are conjugate in $\sn$. The result then follows since two permutations are conjugate in $\sn$ if and only they have the same cycle type.
\end{proof}

In order to analyse the conjugacy classes of elements of finite order, \relem{conj} implies that it suffices to choose a single representative permutation for each conjugacy class of $\sn$ of odd order and to study the conjugacy classes of elements of $B_n/[P_n, P_n]$ of finite order that project to the chosen permutation.



Let us consider the action by conjugation of certain elements of $B_n/[P_n, P_n]$ on the group $P_n/[P_n, P_n]$. If $k,n\geq 3$ and $r\geq 0$ are integers such that $r+k\leq n$, define $\delta_{r,k},\alpha_{r,k}\in B_n/[P_n, P_n]$ by:
\begin{equation}\label{eq:defdelta}
\text{$\delta_{r,k}=\sigma_{r+k-1}\cdots \sigma_{r+\frac{k+1}{2}} \sigma_{r+\frac{k-1}{2}}^{-1}\cdots \sigma_{r+1}^{-1}$ and $\alpha_{r,k}=\sigma_{r+1}\cdots \sigma_{r+k-1}$.}
\end{equation}

\begin{lem}\label{lem:0}
Let $n, k\geq 3$ and $r\geq 0$ be integers such that $k$ is odd and $r+k\leq n$. Then $\delta_{r,k}$ is of order $k$ in $B_n/[P_n, P_n]$, and satisfies:
\begin{equation}\label{eq:deltarkdef}
\delta_{r,k}=\left(A_{r+\frac{k+1}{2}, r+k} A_{r+\frac{k+3}{2}, r+k} \cdots A_{r+k-1, r+k}\right) \alpha_{r,k}^{-1}.
\end{equation}
Furthermore, the action of conjugation by $\alpha_{r,k}$ on the basis elements $A_{i,j}$ of $P_n/[P_n, P_n]$ is given by:
\begin{equation}\label{eq:conjalpha}
\alpha_{r,k}A_{i,j}\alpha_{r,k}^{-1}=
\begin{cases}
A_{i,j} & \text{if $i,j\notin \brak{r+1, \ldots, r+k}$}\\
A_{i+1,j+1} & \text{if $r+1 \leq i < j \leq r+k-1$}\\
A_{r+1,i+1} & \text{if $r+1 \leq i < j = r+k$}\\
A_{i,j+1} & \text{if $i < r+1 \leq j \leq r+k-1$}\\
A_{i,r+1} & \text{if $i < r+1$ and $j=r+k$}\\
A_{i+1,j} & \text{if $r+1 \leq i \leq r+k-1$ and $r+k < j \leq n$}\\
A_{r+1,j} & \text{if $i=r+k < j \leq n$,}
\end{cases}
\end{equation}
and the action of conjugation by $\delta_{r,k}$ is the inverse action of $\alpha_{r,k}$ and is given by:
\begin{equation}\label{eq:conjdelta}
\delta_{r,k}A_{i,j}\delta_{r,k}^{-1}=
\begin{cases}
A_{i,j} & \text{if $i,j\notin \brak{r+1, \ldots, r+k}$}\\
A_{i-1,j-1} & \text{if $r+2 \leq i < j \leq r+k$}\\
A_{j-1,r+k} & \text{if $r+1 = i < j \leq r+k$}\\
A_{i,j-1} & \text{if $i < r+1 < j \leq r+k$}\\
A_{i,r+k} & \text{if $i < r+1$ and $j=r+1$}\\
A_{i-1,j} & \text{if $r+1 < i \leq r+k$ and $r+k < j \leq n$}\\
A_{r+k,j} & \text{if $r+1=i$ and $r+k < j \leq n$.}
\end{cases}
\end{equation}
\end{lem}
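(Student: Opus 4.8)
The plan is to verify the claimed formulas by direct, if lengthy, computation, starting from the definitions~\reqref{defdelta} and the known conjugation action~\reqref{conjugAij2} of the standard generators $\sigma_k$ on the basis $(A_{i,j})$ of $P_n/[P_n,P_n]$. The element $\alpha_{r,k}=\sigma_{r+1}\cdots\sigma_{r+k-1}$ is (up to re-indexing the strings by $r$) exactly the element $\alpha_{0,k}$ of \repr{Proalpha0n} acting on the strands $r+1,\ldots,r+k$; so \req{conjalpha} can be read off from Proposition~\ref{prop:Proalpha0n} together with the trivial observation that $\sigma_{r+1},\ldots,\sigma_{r+k-1}$ commute with any $A_{i,j}$ both of whose indices lie outside $\brak{r+1,\ldots,r+k}$, and that they permute the remaining indices appropriately. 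Concretely, one checks that conjugation by $\alpha_{r,k}$ sends the index set $\brak{r+1,\ldots,r+k}$ to itself by the cycle $r+1\mapsto r+2\mapsto\cdots\mapsto r+k\mapsto r+1$ applied to the pair $(i,j)$, except that when the ``new'' pair would be $(\ast,r+k+1)$ — i.e.\ when $j=r+k$ — the pair instead collapses to $(r+1,i+1)$, mirroring the passage from \req{alphaconj1} to \req{alphaconj2}. This case analysis produces precisely the seven cases listed in \req{conjalpha}.

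Next I would establish \req{deltarkdef}, i.e.\ the identity $\delta_{r,k}=\bigl(A_{r+\frac{k+1}{2},r+k}\cdots A_{r+k-1,r+k}\bigr)\alpha_{r,k}^{-1}$ in $B_n/[P_n,P_n]$. Writing $\delta_{r,k}=\sigma_{r+k-1}\cdots\sigma_{r+\frac{k+1}{2}}\cdot\sigma_{r+\frac{k-1}{2}}^{-1}\cdots\sigma_{r+1}^{-1}$ and $\alpha_{r,k}^{-1}=\sigma_{r+k-1}^{-1}\cdots\sigma_{r+1}^{-1}$, the product $\delta_{r,k}\alpha_{r,k}$ equals $\sigma_{r+k-1}\cdots\sigma_{r+\frac{k+1}{2}}\sigma_{r+\frac{k-1}{2}}^{-1}\cdots\sigma_{r+1}^{-1}\sigma_{r+1}\cdots\sigma_{r+k-1}=\sigma_{r+k-1}\cdots\sigma_{r+\frac{k+1}{2}}\cdot\sigma_{r+\frac{k+1}{2}}\cdots\sigma_{r+k-1}$, and this is a product of the form $\sigma_{j-1}\cdots\sigma_{i+1}\sigma_{i+1}\cdots\sigma_{j-1}$ type expressions which, expanded using \req{defaij}, telescopes to $A_{r+\frac{k+1}{2},r+k}A_{r+\frac{k+3}{2},r+k}\cdots A_{r+k-1,r+k}$ modulo $[P_n,P_n]$; here one uses repeatedly that distinct $A_{i,j}$ commute in the Abelian quotient. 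Since $A_{i,j}\in P_n/[P_n,P_n]$ is central, $\delta_{r,k}$ and $\alpha_{r,k}^{-1}$ differ by a central element, so \emph{conjugation} by $\delta_{r,k}$ coincides with conjugation by $\alpha_{r,k}^{-1}$; this immediately gives that the $\delta_{r,k}$-action in \req{conjdelta} is the inverse of the $\alpha_{r,k}$-action in \req{conjalpha}, and a bookkeeping check that inverting each of the seven cases of \req{conjalpha} yields the seven cases of \req{conjdelta} finishes that part. Finally, to see that $\delta_{r,k}$ has order $k$: its image $\overline{\sigma}(\delta_{r,k})$ is the $k$-cycle $(r+1,r+k,r+k-1,\ldots,r+2)$ (compute the permutation of $\alpha_{r,k}^{-1}$), so the order is a multiple of $k$; and $\delta_{r,k}^{k}=\delta_{r,k}^{k}$ lies in $P_n/[P_n,P_n]$, where using \req{deltarkdef} and the fact that conjugation by $\alpha_{r,k}$ permutes cyclically (with period $k$) the relevant $A_{i,j}$'s, one computes $\delta_{r,k}^{k}$ as a signed sum over a full $\alpha_{r,k}$-orbit; the telescoping forces this to be trivial, exactly as in the computation of $(\sigma_1\cdots\sigma_{r-1})^{r}$ in the proof of \repr{Torcaonimpar}. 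Hence the order is exactly $k$.

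The main obstacle is bookkeeping rather than conceptual: tracking how a pair $(i,j)$ with one or both indices inside the window $\brak{r+1,\ldots,r+k}$ transforms under the cyclic shift, and in particular getting the boundary cases right — namely $i=r+1$, $j=r+k$, and the "wrap-around" where $j=r+k$ forces the index to jump back to $r+1$ (or, for $\delta_{r,k}$, where $i=r+1$ forces a jump to $r+k$). I would organize this by first treating $\alpha_{r,k}$ as the model case via \repr{Proalpha0n} (which already isolates the subtle $j=r$ wrap-around), then obtain $\delta_{r,k}$ for free by the centrality argument above, and only at the end verify the order claim by the orbit-sum computation. No genuinely new idea beyond \repr{Proalpha0n} is needed; the risk is purely in the index arithmetic, which I would double-check on the small case $k=3$ (where $\delta_{r,3}=\sigma_{r+2}\sigma_{r+1}^{-1}$) to make sure the formulas reproduce \req{conjugAij2} correctly.
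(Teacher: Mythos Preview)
Your proposal is correct and follows essentially the same route as the paper: compute $\delta_{r,k}\alpha_{r,k}$ to get \req{deltarkdef}, read off \req{conjalpha} from \repr{Proalpha0n} plus \req{conjugAij2}, deduce \req{conjdelta} from the centrality of the pure part, and obtain the order from the orbit-sum computation of \repr{Torcaonimpar}. The only organisational difference is that the paper first treats $r=0$ and then transports everything to general $r$ via the injective block embedding $\psi\colon B_k/[P_k,P_k]\to B_n/[P_n,P_n]$ (sending $\sigma_i\mapsto\sigma_{r+i}$), whereas you work with general $r$ throughout; and for the order, the paper conjugates $\delta_{0,k}^{-1}$ by $\alpha_{0,k}$ to put it in the form $N\alpha_{0,k}$ and then checks the system~\reqref{SistEqns} directly, rather than expanding $\delta_{r,k}^{k}$ as you do --- but this is the same computation in a different order.
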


\begin{proof}
Let $n\geq 3$, $k\geq 3$ and $r\geq 0$ be integers such that $k$ is odd and $r+k\leq n$. We start by proving~\req{deltarkdef} and by showing that $\delta_{r,k}\in B_n/[P_n, P_n]$ is of order $k$. First let $r=0$. Then:
\begin{align}
\delta_{0,k}\alpha_{0,k} & = \sigma_{k-1}\cdots \sigma_{\frac{k+1}{2}} \sigma_{\frac{k-1}{2}}^{-1}\cdots \sigma_{1}^{-1}\sigma_{1}\cdots \sigma_{\frac{k-1}{2}} \sigma_{\frac{k+1}{2}} \cdots \sigma_{k-1}\notag\\
 & = \sigma_{k-1}\cdots \sigma_{\frac{k+3}{2}} \sigma_{\frac{k+1}{2}}^2 \sigma_{\frac{k+3}{2}} \cdots \sigma_{k-1} = A_{\frac{k+1}{2}, k} A_{\frac{k+3}{2}, k} \cdots A_{k-1, k},\label{eq:deltaalpha}
\end{align}
%
which yields the equality~\reqref{deltarkdef}. Set $N=(A_{\frac{k+1}{2}, k} A_{\frac{k+3}{2}, k} \cdots A_{k-1, k})^{-1}\in P_{n}/[P_{n},P_{n}]$. Then $\alpha_{0,k}^{-1} \delta_{0,k}^{-1} \alpha_{0,k}=N\alpha_{0,k}$ by \req{deltaalpha}, and so $\delta_{0,k}$ and $N\alpha_{0,k}$ are of the same order. Considering $N$ and $\alpha_{0,k}$ to be elements of $B_k/[P_k, P_k]$ for a moment, and using~\reqref{tableij} and~\reqref{sumN}, we have $\displaystyle N=-\sum_{i=1}^{\frac{k-1}{2}}\, e_{i,k-i}$, and so $N$ satisfies the system of equations~\reqref{SistEqns} (taking $n=k$ in that system). It follows from the proof of \repr{Torcaonimpar} that $N\alpha_{0,k}$ is of order $k$ in $B_k/[P_k, P_k]$, and so $\delta_{0,k}$ is of order $k$ in $B_k/[P_k, P_k]$. Since $k\leq n$, we deduce from \reth{oddtorsion}(\ref{it:oddtorsiona}) that $\delta_{0,k}$, considered as an element of $B_n/[P_n, P_n]$, is also of order $k$.

Now assume that $r\geq 1$. Let $\psi$ denote the composition of the following homomorphisms:
\begin{equation*}
\frac{B_k}{[P_k, P_k]} \to \frac{B_{r,k,n-r-k}}{[P_n, P_n]} \to \frac{B_n}{[P_n, P_n]},
\end{equation*}
where the first homomorphism is induced by the inclusion $B_{k}\lhra B_{r,k,n-r-k}$ of $B_{k}$ in the middle block of the mixed braid group $B_{r,k,n-r-k}$, and the second homomorphism is induced by the inclusion $B_{r,k,n-r-k}\lhra B_{n}$. In a manner similar to that for $\overline{\iota}$ in the proof of \reth{oddtorsion}(\ref{it:oddtorsiona}), the homomorphism $\psi$ may be seen to be injective. For all $1\leq i\leq n-1$, $\psi(\sigma_i)=\sigma_{r+i}$, hence $\psi(\delta_{0,k})=\delta_{r,k}$ and $\psi(\alpha_{0,k})=\alpha_{r,k}$ by \req{defdelta}. The injectivity of $\psi$ implies that $\delta_{r,k}$ is of order $k$ in $\displaystyle\frac{B_n}{[P_n, P_n]}$. Moreover, for all $1\leq i < j\leq n$, $\psi(A_{i,j})=A_{r+i, r+j}$ by \req{defaij}, thus:
\begin{align*}
\delta_{r,k} &= \psi(\delta_{0,k}) = \psi\left( \left(A_{\frac{k+1}{2}, k} A_{\frac{k+3}{2}, k} \cdots A_{k-1, k}\right) \alpha_{0,k}^{-1} \right)\quad\text{by \req{deltaalpha}}\\
&= \left(A_{r+\frac{k+1}{2}, r+k} A_{r+\frac{k+3}{2}, r+k} \cdots A_{r+k-1, r+k}\right) \alpha_{r,k}^{-1},
\end{align*}
which is \req{deltarkdef}. This proves the first part of the statement. It remains to establish equations~\reqref{conjalpha} and~\reqref{conjdelta}. The first relation of~\reqref{conjalpha} holds clearly. Applying $\psi$ to both sides of equations~\reqref{alphaconj1} and~\reqref{alphaconj2} (and taking $r=k$)
gives rise to the second and third relations of~\reqref{conjalpha}. 
%
%
%
%
Finally, \req{conjugAij2} yields the four remaining relations of~\reqref{conjalpha}. To obtain~\reqref{conjdelta}, by \req{deltarkdef}, conjugation by $\alpha_{r,k}\delta_{r,k}$ in $P_{n}/[P_{n},P_{n}]$ is conjugation by an element of $P_{n}/[P_{n},P_{n}]$, which gives rise to the trivial action. So the actions  
by conjugation of $\alpha_{r,k}$ and $\delta_{r,k}$ on $P_{n}/[P_{n},P_{n}]$ are mutual inverses. Equation~\reqref{conjalpha} then implies \req{conjdelta}.
%
\end{proof}


Another corollary of \reth{oddtorsion}(\ref{it:oddtorsionc}), which we now prove, is \reth{realAbelian}, which states that there is a one-to-one correspondence between the finite Abelian subgroups of $B_{n}/[P_{n},P_{n}]$ and the Abelian subgroups of $\sn$ of odd order.

\begin{proof}[Proof of \reth{realAbelian}]
First, it follows from \rerems{restrict}(\ref{it:restrictc}) that the isomorphism class of a finite Abelian subgroup of $B_{n}/[P_{n},P_{n}]$ is realised by a subgroup of $\sn$ (of odd order). Conversely, let $H$ be an Abelian subgroup of $\sn$ of odd order. Then $H$ is isomorphic to a direct product of the form $\Z_{k_{1}}\times \cdots \times \Z_{k_{r}}$, where for $i=1,\ldots,r$, $k_{i}$ is a power of an odd prime number. By~\cite{Ho}, $\sum_{i=1}^{r} k_{i} \leq n$. Let $k_{0}=0$. Then for $l=1,\ldots,r$, the element $\delta_{\sum_{j=0}^{l-1} \,k_{j}, k_{l}}$ belongs to $B_{n}/[P_{n},P_{n}]$ and is of order $k_{l}$ by \relem{0}. By construction, the $\delta_{\sum_{j=1}^{l-1} \,k_{j}, k_{l}}$ commute pairwise. The subgroup $\ang{\delta_{0,k_{1}}, \ldots, \delta_{\sum_{j=1}^{r-1} \,k_{j}, k_{r}}}$ is then isomorphic to $H$ since $\overline{\sigma}\left(\delta_{\sum_{j=1}^{l-1} \,k_{j}, k_{l}}\right)$ is a $k_{l}$-cycle in $\sn$, and the supports of such cycles are pairwise disjoint.
\end{proof}

The following two propositions are immediate consequences of \relem{0}.

\begin{prop}\label{prop:i}
Let $n, k\geq 3$ and $r\geq 0$ be integers such that $k$ is odd, and suppose that $3\leq r+k\leq n$. Then the action of conjugation by $\delta_{r,k}$ on $P_n/[P_n, P_n]$ restricts to an action on the set $A=\setl{A_{i,j}}{r+1\leq i < j\leq r+k}$. The orbits of this action partition the set $A$ 
into $\frac{k-1}{2}$ orbits each of length $k$ and given by:
\begin{multline*}
A_{r+1, r+i+1} \stackrel{\delta_{r,k}}{\longmapsto} A_{r+i,r+k} \stackrel{\delta_{r,k}}{\longmapsto} A_{r+i-1,r+k-1} \stackrel{\delta_{r,k}}{\longmapsto} \cdots \stackrel{\delta_{r,k}}{\longmapsto} A_{r+2,r+k-i+2} \stackrel{\delta_{r,k}}{\longmapsto}\\
A_{r+1,r+k-i+1} \stackrel{\delta_{r,k}}{\longmapsto} A_{r+k-i,r+k} \stackrel{\delta_{r,k}}{\longmapsto}\cdots \stackrel{\delta_{r,k}}{\longmapsto} A_{r+2,r+i+2} \stackrel{\delta_{r,k}}{\longmapsto} A_{r+1,r+i+1} \quad \text{for $i=1,\ldots, \frac{k-1}{2}$.}   
\end{multline*}
%
%
%
%
\end{prop}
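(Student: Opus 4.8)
The plan is to derive everything from the explicit description \reqref{conjdelta} of conjugation by $\delta_{r,k}$ established in \relem{0}. First I would check that this action restricts to a permutation of the set $A=\setl{A_{i,j}}{r+1\leq i<j\leq r+k}$: by \reqref{conjdelta}, if $r+2\leq i<j\leq r+k$ then $\delta_{r,k}A_{i,j}\delta_{r,k}^{-1}=A_{i-1,j-1}$ and $r+1\leq i-1<j-1\leq r+k-1$, while if $i=r+1<j\leq r+k$ then $\delta_{r,k}A_{r+1,j}\delta_{r,k}^{-1}=A_{j-1,r+k}$ with $r+1\leq j-1\leq r+k-1$; in either case the image lies in $A$, and since conjugation by $\delta_{r,k}$ is a bijection of $P_n/[P_n,P_n]$, it restricts to a bijection of the finite set $A$.

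The cleanest way to identify the orbits is to reindex. Writing $A_{r+a,r+b}\leftrightarrow\brak{a,b}$ for $1\leq a<b\leq k$, and viewing $\brak{a,b}$ as a chord between two of the $k$ vertices $1,\ldots,k$ of a regular $k$-gon labelled cyclically modulo $k$, formula \reqref{conjdelta} says exactly that $\delta_{r,k}$ acts on chords by the rotation $x\mapsto x-1\pmod{k}$, with the convention that the vertex $0$ is renamed $k$: indeed $\brak{a,b}\mapsto\brak{a-1,b-1}$ when $a\geq 2$, and $\brak{1,b}\mapsto\brak{b-1,k}$ when $a=1$. Hence the orbit of a chord is the set of its $k$ rotates, and these are pairwise distinct, since a rotation by $t$ fixing a chord must satisfy $2t\equiv 0\pmod{k}$, forcing $t\equiv 0$ because $k$ is odd; so every orbit has length exactly $k$. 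Moreover the rotation preserves the \emph{circular length} $\min(b-a,\,k-(b-a))$ of a chord, which takes each value in $\brak{1,\ldots,\frac{k-1}{2}}$ (no chord is a diameter, $k$ being odd), and two chords of the same circular length are rotates of one another; so the orbits are precisely indexed by this length. This gives $\frac{k-1}{2}$ orbits, and since $\frac{k-1}{2}\cdot k=\binom{k}{2}=\ord{A}$, they partition $A$.

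It then remains to match the orbit of circular length $i$ with the chain displayed in the statement. Starting from $A_{r+1,r+i+1}$ (the chord $\brak{1,i+1}$, of circular length $i$), iterating \reqref{conjdelta} decrements both indices simultaneously until the first index reaches $r+1$, at which point the special rule $A_{r+1,j}\mapsto A_{j-1,r+k}$ sends $A_{r+1,r+k-i+1}$ to $A_{r+k-i,r+k}$; continuing, one returns to $A_{r+1,r+i+1}$ after exactly $k$ steps, having visited precisely the $k$ elements displayed, the two with first index $r+1$ being $A_{r+1,r+i+1}$ and $A_{r+1,r+k-i+1}$ (distinct, since $i\neq k-i$). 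Distinct values of $i$ in $\brak{1,\ldots,\frac{k-1}{2}}$ yield chords of distinct circular length, so these $\frac{k-1}{2}$ orbits are pairwise disjoint, and the argument is complete.

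The only genuinely fiddly point is the bookkeeping in this last step---checking that the indices stay in the range $\brak{r+1,\ldots,r+k}$ throughout and that the transition $A_{r+1,\,\cdot}\mapsto A_{\cdot,\,r+k}$ occurs exactly where claimed---but this is entirely routine given \reqref{conjdelta}. Alternatively one can dispense with the $k$-gon picture and establish the proposition purely by this iteration, using the identity $\frac{k-1}{2}\cdot k=\binom{k}{2}=\ord{A}$ to conclude that the listed orbits exhaust $A$.
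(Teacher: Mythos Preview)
Your proof is correct and rests on exactly the same input as the paper's, namely the second and third lines of \reqref{conjdelta} from \relem{0}; the paper's own proof is the single sentence ``The result follows from the second and third lines of \req{conjdelta}'', leaving the orbit bookkeeping to the reader. Your chord-on-a-$k$-gon reformulation, interpreting conjugation by $\delta_{r,k}$ as the rotation $x\mapsto x-1$ modulo $k$, is a clean way to see at once why each orbit has length exactly $k$ (no nontrivial rotation fixes a chord when $k$ is odd) and why the orbits are parametrised by the circular length $1,\ldots,\frac{k-1}{2}$; this is more illuminating than the bare iteration the paper implicitly has in mind, though of course it encodes the same computation.
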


\begin{proof}
The result follows from the second and third lines of~\req{conjdelta}.
\end{proof}

\begin{prop}\label{prop:ii}
Let $n, k\geq 3$ and $r\geq 0$ be integers such that $k$ is odd, and suppose that $3 \leq r+k\leq n$. Then for each  $j>r+k$, the action of conjugation by $\delta_{r,k}$ on $P_n/[P_n, P_n]$ restricts to a transitive action on the set $\setl{A_{i,j}}{r+1\leq i \leq r+k}$, whose orbit of length $k$ is given by:
\begin{equation}\label{eq:actAj}
A_{r+k, j}\stackrel{\delta_{r,k}}\longmapsto A_{r+k-1,j} \stackrel{\delta_{r,k}}\longmapsto  \cdots \stackrel{\delta_{r,k}}\longmapsto A_{r+2,j} \stackrel{\delta_{r,k}}\longmapsto A_{r+1,j} \stackrel{\delta_{r,k}}\longmapsto A_{r+k,j}.
\end{equation}
Similarly, if $i<r+1$, the action of conjugation by $\delta_{r,k}$ on $P_n/[P_n, P_n]$ restricts to a transitive action on the set $A=\setl{A_{i,j}}{r+1\leq j \leq  r+k}$, whose orbit of length $k$ is given by:
\begin{equation}\label{eq:actAi}
A_{i, r+k}\stackrel{\delta_{r,k}}\longmapsto A_{i,r+k-1} \stackrel{\delta_{r,k}}\longmapsto  \cdots \stackrel{\delta_{r,k}}\longmapsto A_{i,r+2} \stackrel{\delta_{r,k}}\longmapsto A_{i,r+1} \stackrel{\delta_{r,k}}\longmapsto A_{i,r+k}.
\end{equation}
\end{prop}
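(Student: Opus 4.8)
The plan is to deduce everything directly from the explicit formula~\reqref{conjdelta} for the action of conjugation by $\delta_{r,k}$ on the basis $(A_{i,j})$ of $P_n/[P_n,P_n]$, exactly as \repr{i} was deduced from the second and third lines of that formula. Here the relevant lines are the sixth and seventh lines of~\reqref{conjdelta}, which govern the case $j>r+k$, and the fourth and fifth lines, which govern the case $i<r+1$.

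First I would treat the orbit~\reqref{actAj}. Fix $j>r+k$. The sixth line of~\reqref{conjdelta} says that $\delta_{r,k}A_{i,j}\delta_{r,k}^{-1}=A_{i-1,j}$ whenever $r+1<i\leq r+k$, and the seventh line says $\delta_{r,k}A_{r+1,j}\delta_{r,k}^{-1}=A_{r+k,j}$ (taking $i=r+1$ in the statement "if $r+1=i$"). Since the second index $j$ is never changed and stays strictly greater than $r+k$ throughout, the set $\setl{A_{i,j}}{r+1\leq i\leq r+k}$ is invariant, and applying $\delta_{r,k}$ repeatedly starting from $A_{r+k,j}$ walks the first index down by one at each step, $A_{r+k,j}\mapsto A_{r+k-1,j}\mapsto\cdots\mapsto A_{r+1,j}$, and then $A_{r+1,j}\mapsto A_{r+k,j}$, closing up a single orbit of length $k$. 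The case $i<r+1$ is entirely symmetric: the fourth line of~\reqref{conjdelta} gives $\delta_{r,k}A_{i,j}\delta_{r,k}^{-1}=A_{i,j-1}$ for $r+1<j\leq r+k$ with $i$ fixed and $i<r+1$ untouched, while the fifth line gives $\delta_{r,k}A_{i,r+1}\delta_{r,k}^{-1}=A_{i,r+k}$; these statements together show that $\setl{A_{i,j}}{r+1\leq j\leq r+k}$ is invariant and that the orbit is the single $k$-cycle~\reqref{actAi}.

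There is essentially no obstacle here: the proposition is a bookkeeping consequence of \relem{0}, and the only point requiring a small amount of care is matching the boundary cases ($i=r+1$ in the first part, $j=r+1$ in the second part) to the correct line of~\reqref{conjdelta} and checking that no index ever leaves the stated range, so that the relevant subset of the basis is genuinely $\delta_{r,k}$-invariant. Once that is observed, transitivity and the length-$k$ claim are immediate since $\delta_{r,k}$ has order $k$ by \relem{0} and the displayed cycle visits $k$ distinct basis elements.
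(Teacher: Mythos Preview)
Your proof is correct and follows exactly the paper's approach: both deduce the proposition directly from the explicit conjugation formula~\reqref{conjdelta} of \relem{0}, reading off the orbit structure from the relevant cases. You have in fact matched the line numbers more carefully than the paper, which (by a minor slip) attributes~\reqref{actAj} to the fourth and fifth lines and~\reqref{actAi} to the sixth and seventh, whereas your assignment (sixth and seventh for $j>r+k$, fourth and fifth for $i<r+1$) is the correct one.
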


\begin{proof}
Equation~\reqref{actAj} (resp.\ \req{actAi}) follows from the $4\up{th}$ and $5\up{th}$ lines (resp.\ the $6\up{th}$ and $7\up{th}$ lines) of \req{conjdelta}.
%
\end{proof}


\begin{prop}\label{prop:iii}
Let $n, k, l\geq 3$ and $r, s\geq 0$ be integers such that $k$ and $l$ are odd, $3 \leq r+k < s+1$ and $s+l\leq n$, and let $\ell_0=\operatorname{lcm}(k,l)$. The action of conjugation by $\delta_{r,k}\delta_{s,l}$ on $P_n/[P_n, P_n]$ restricts to an action on the set $\setl{A_{i,j}}{\text{$r+1 \leq i \leq r+k$ and $s+1 \leq j \leq s+l$}}$, given by:
\begin{equation*}
\delta_{r,k}\delta_{s,l} A_{i,j} (\delta_{r,k}\delta_{s,l})^{-1} = 
\begin{cases}
A_{i-1,j-1} & \text{if $r+1<i\leq r+k$ and $s+1<j\leq s+l$}\\
A_{r+k,j-1} & \text{if $i=r+1$ and $s+1<j\leq s+l$}\\
A_{i-1,s+l} & \text{if $r+1<i\leq r+k$ and $j=s+1$}\\
A_{r+k,s+l} & \text{if $i=r+1$ and $j=s+1$.}
\end{cases}
\end{equation*}
The orbits of the action partition this set into $kl/\ell_0$ orbits of length $\ell_0$ given by combining~\reqref{actAj} and~\reqref{actAi}.
\end{prop}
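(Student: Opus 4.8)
The plan is to reduce the statement to a direct computation using~\req{conjdelta} applied twice, keeping careful track of which index ranges are affected by each of $\delta_{r,k}$ and $\delta_{s,l}$. First I would observe that, since $r+1\leq i\leq r+k<s+1\leq j\leq s+l$, the first index $i$ lies in the block $\brak{r+1,\ldots,r+k}$ on which $\delta_{r,k}$ acts non-trivially, while $j$ is strictly greater than $r+k$, so from the viewpoint of $\delta_{r,k}$ the pair $A_{i,j}$ falls into the ``$r+1<i\leq r+k$ and $r+k<j\leq n$'' case (sixth line of~\req{conjdelta}) when $i>r+1$, giving $A_{i-1,j}$, and into the ``$r+1=i$ and $r+k<j\leq n$'' case (seventh line) when $i=r+1$, giving $A_{r+k,j}$. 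In either case the first index is moved cyclically within $\brak{r+1,\ldots,r+k}$ while the second index is untouched, and moreover the resulting second index is still $j>r+k$, hence still not in the $\delta_{r,k}$-block: this is exactly the content of \req{actAj}, and it shows that the $\delta_{r,k}$-part of the action permutes the set in question according to the first index alone, as in~\req{actAj}.

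Next I would apply $\delta_{s,l}$ to the result. Since $\delta_{r,k}$ and $\delta_{s,l}$ involve disjoint blocks of generators (because $r+k<s+1$), they commute, so $\delta_{r,k}\delta_{s,l}A_{i,j}(\delta_{r,k}\delta_{s,l})^{-1}=\delta_{s,l}\bigl(\delta_{r,k}A_{i,j}\delta_{r,k}^{-1}\bigr)\delta_{s,l}^{-1}$, and it suffices to analyse the $\delta_{s,l}$-conjugate of the braid already obtained. For $\delta_{s,l}$ the relevant block is $\brak{s+1,\ldots,s+l}$; the first index (now in $\brak{r+1,\ldots,r+k}$, so $<s+1$) is a spectator, while the second index $j\in\brak{s+1,\ldots,s+l}$ is moved cyclically by the sixth and seventh lines of~\req{conjdelta} (with the rôles of the two indices swapped relative to the previous paragraph), precisely as in~\req{actAi}: one gets $A_{\ast,j-1}$ when $j>s+1$ and $A_{\ast,s+l}$ when $j=s+1$. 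Combining the two independent cyclic shifts — first index shifted by the $\delta_{r,k}$-rule, second index shifted by the $\delta_{s,l}$-rule — yields exactly the four-case formula in the statement, where the ``$i-1$'' and ``$r+k$'' alternatives come from whether $i=r+1$ or not, and the ``$j-1$'' and ``$s+l$'' alternatives from whether $j=s+1$ or not.

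For the orbit count, the action on $\setl{A_{i,j}}{r+1\leq i\leq r+k,\ s+1\leq j\leq s+l}$ is the ``product'' of a cyclic action of order $k$ on the $i$-coordinate (the $k$-cycle of~\req{actAj}) and a cyclic action of order $l$ on the $j$-coordinate (the $l$-cycle of~\req{actAi}), so it is a $\Z_k\times\Z_l$-action on a set of size $kl$ generated by a single element $(1,1)\in\Z_k\times\Z_l$, which has order $\ell_0=\operatorname{lcm}(k,l)$; hence every orbit has length $\ell_0$ and there are $kl/\ell_0$ of them. The only point requiring a little care is the bookkeeping of the index ranges to make sure that at each step the appropriate line of~\req{conjdelta} is the one that applies (in particular that the ``spectator'' index never accidentally enters the active block), and that the two shifts are genuinely independent because the blocks $\brak{r+1,\ldots,r+k}$ and $\brak{s+1,\ldots,s+l}$ are disjoint; once this is in place the proof is a routine verification, so I do not expect any real obstacle.
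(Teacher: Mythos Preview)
Your approach is correct and is essentially the same as the paper's, which simply says the result follows by applying Propositions~\ref{prop:i} and~\ref{prop:ii} to the separate actions of $\delta_{r,k}$ and $\delta_{s,l}$; you have just filled in the details the paper leaves implicit, including the orbit-count argument via the diagonal element of $\Z_k\times\Z_l$. One small slip: in your second paragraph, when you conjugate by $\delta_{s,l}$ with the first index below $s+1$ and the second index in $\brak{s+1,\ldots,s+l}$, it is the \emph{fourth and fifth} lines of~\req{conjdelta} that apply (giving~\req{actAi}), not the sixth and seventh; your conclusion is nonetheless correct since you identify the outcome with~\req{actAi}.
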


\begin{proof}
The result follows by applying Propositions~\ref{prop:i} and~\ref{prop:ii} to the action of $\delta_{r,k}$ and $\delta_{s,l}$ on the elements of the set $\setl{A_{i,j}}{\text{$r+1 \leq i \leq r+k$ and $s+1 \leq j \leq s+l$}}$.
\end{proof}


\begin{prop}\label{prop:iv}
Let $n\geq 3$, and let $\beta$ be an element of $B_n/[P_n, P_n]$. If $m\geq 0$ is such that $\overline{\sigma}(\beta)$ belongs to the subgroup of $\sn$ isomorphic to $\sn[q]$ on the symbols $\brak{m+1, m+2, \ldots, m+q}$ then the action of $\beta$ on the set $\setl{A_{i,j}}{\text{$1\leq i < j \leq m$ or $m+q+1\leq i < j \leq n$}}$ is trivial.
\end{prop}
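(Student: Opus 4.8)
The plan is to reduce the statement to a single application of the conjugation formula \reqref{conjugAij2}. First I would note that the kernel $P_n/[P_n,P_n]$ of the extension \reqref{sespnquot} is abelian, so conjugation by one of its elements acts trivially on it; hence the conjugation action of $B_n/[P_n,P_n]$ on $P_n/[P_n,P_n]$ factors through $\overline{\sigma}$ (this is the representation $\phi$ appearing in the proof of \repr{cryst}). In particular $\beta$ acts on $P_n/[P_n,P_n]$ exactly as any lift of the permutation $\overline{\sigma}(\beta)$ does, so we are free to replace $\beta$ by a convenient such lift.

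Since $\overline{\sigma}(\beta)$ lies in the copy of $\sn[q]$ supported on $\brak{m+1,\dots,m+q}$, and this subgroup is generated by the transpositions $(m+i,m+i+1)$ for $1\leq i\leq q-1$, one may choose as a lift a word $w$ in the generators $\sigma_{m+1}^{\pm 1},\dots,\sigma_{m+q-1}^{\pm 1}$ of $B_n/[P_n,P_n]$. As a composition of automorphisms each of which fixes a given subset of $\brak{A_{i,j}}_{1\leq i<j\leq n}$ pointwise again fixes that subset pointwise, it suffices to show that for every $k$ with $m+1\leq k\leq m+q-1$, conjugation by $\sigma_k$ fixes every $A_{i,j}$ with $1\leq i<j\leq m$ or $m+q+1\leq i<j\leq n$.

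This is immediate from the first case of \reqref{conjugAij2}. If $1\leq i<j\leq m$ then $j\leq m<m+1\leq k$, so $k$ exceeds each of $i-1,i,j-1,j$; and if $m+q+1\leq i<j\leq n$ then $k\leq m+q-1<m+q\leq i-1$, so $k$ is smaller than each of $i-1,i,j-1,j$. In either case $k\notin\brak{i-1,i,j-1,j}$, whence $\sigma_kA_{i,j}\sigma_k^{-1}=A_{i,j}$, and the proposition follows. I do not anticipate any real difficulty here; the only steps deserving care are the observation that the action descends to $\sn$ (so that a convenient lift of $\overline{\sigma}(\beta)$ may be used) and the index bookkeeping, which rests on the inequality $m+q\leq n$ implicit in the hypothesis.
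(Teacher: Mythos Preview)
Your proof is correct and follows essentially the same approach as the paper: both arguments observe that the conjugation action factors through $\overline{\sigma}$ (equivalently, that $\beta$ may be written as a word in $\sigma_{m+1}^{\pm1},\dots,\sigma_{m+q-1}^{\pm1}$ times an element of the abelian kernel, which acts trivially), and then invoke the first case of \reqref{conjugAij2} to conclude. The paper in fact only treats the case $m=0$ explicitly and declares the remaining cases similar, whereas you carry out the index bookkeeping for general $m$ directly.
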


\begin{proof} 
We just prove the claim for $m=0$ since the remaining cases are similar. 
Let $1\leq q <n-1$. By \reth{oddtorsion}(\ref{it:oddtorsiona}), the inclusion $\map{\iota}{B_{q}}[B_{n}]$ induces an injective homomorphism $\map{\overline{\iota}}{B_{q}/[P_q, P_q]}[B_n/[P_n, P_n]]$. Since $\overline{\sigma}(\beta)\in \sn[q]$, there exists $\tau\in B_{q}/[P_q, P_q]$ such that $\overline{\sigma}(\beta)=\overline{\sigma}(\overline{\iota}(\tau))$ is the identity permutation, and so there exists $\beta'\in P_{n}/[P_n, P_n]$ such that 
$\beta=\overline{\iota}(\tau) \beta'$. The result follows from \req{conjugAij2} and the fact that $\beta'$ is central in $P_{n}/[P_n, P_n]$.
\end{proof}


Let $n\geq 3$, let $k_{0}=0$, let $3\leq k_1\leq k_2 \leq \ldots \leq k_s$ be odd, and suppose that $\sum_{j=1}^{s}\, k_{j}\leq n$. We define:
%
%
\begin{equation}\label{eq:delta}
\delta=\delta(k_{0},\ldots,k_{s})=\delta_{k_{0},k_1}\delta_{k_1,k_2}\delta_{k_1+k_2, k_3}\cdots\delta_{\sum_{j=1}^{s-1} \,k_{j}, k_s},
\end{equation}
where for all $0\leq l\leq s-1$, the element $\delta_{\sum_{j=1}^{l} \,k_{j}, k_{l+1}}$ is given by \req{defdelta}. Since the $\delta_{\sum_{j=1}^{l} \,k_{j}, k_{l+1}}$ commute pairwise and $\delta_{\sum_{j=1}^{l} \,k_{j}, k_{l+1}}$ is of order $k_{l+1}$ by \relem{0}, it follows that $\delta$ is of order $\operatorname{lcm}(k_1,\ldots,k_s)$ in $B_n/[P_n, P_n]$ and:
\begin{equation}\label{eq:theta}
\theta=\overline{\sigma}(\delta)=\theta_1 \cdots \theta_s, 
\end{equation}
where for $i=1,\ldots,s$, $\theta_{i}$ is the $k_{i}$-cycle defined by:
\begin{equation}\label{eq:thetai}
\theta_{i}=\left(\sum_{j=1}^{i-1} \,k_{j}+1, \sum_{j=1}^{i-1} \,k_{j}+2, \ldots, \sum_{j=1}^{i} \,k_{j}\right).
\end{equation}
The order of the permutation $\theta$ is also equal to $\operatorname{lcm}(k_1, k_2, \ldots, k_s)$. Using Propositions~\ref{prop:i}, \ref{prop:ii},~\ref{prop:iii} and~\ref{prop:iv}, we shall now describe the orbits given by the action of conjugation by $\delta$ on the basis $\setl{A_{i,j}}{1\leq i < j \leq n}$ of $P_n/[P_n, P_n]$. The associated partition will be useful when it comes to proving \reth{classconj}.

\begin{thm}\label{th:gener}
Let $n\geq 3$, let $k_{0}=0$, let $3\leq k_1\leq k_2 \leq \ldots \leq k_s$ be odd such that $\sum_{j=1}^{s}\, k_{j}\leq n$, and let $\delta \in B_{n}/[P_{n},P_{n}]$ be defined by \req{delta}. The following sets are disjoint and invariant under the action by conjugation of $\delta$ on the set of basis elements $\setl{A_{i,j}}{1\leq i < j \leq n}$ of $P_n/[P_n, P_n]$: 
\begin{enumerate}[(a)]
\item\label{it:genera} $\setr{A_{i,j}}{\text{$\sum_{l=1}^{r-1}\, k_{l}+1 \leq i < j \leq \sum_{l=1}^{r}\, k_{l}$}}$, where $1 \leq r \leq s$. Under the given action, the orbits of this set are obtained from the relations $e_{r,h,t} \stackrel{\delta}{\mapsto} e_{r,h,t+1}$, where $1\leq h\leq \frac{k_r-1}{2}$, the index $t$ is taken modulo $k_{r}$, and
\begin{equation*}
e_{r,h,t}=\begin{cases}
A_{\sum_{l=1}^{r-1}\, k_{l}+h-t+1, \sum_{l=1}^{r}\, k_{l}-t+1} & \text{if $t\in \brak{1,\ldots,h}$}\\
A_{\sum_{l=1}^{r}\, k_{l}-t+1, \sum_{l=1}^{r}\, k_{l}-t+1+h} & \text{if $t\in \brak{h+1,\ldots,k_{r}}$.}
\end{cases}
\end{equation*}

\item\label{it:generb} $\setr{A_{i,j}}{\text{$\sum_{l=1}^{r-1}\, k_{l}+1 \leq i \leq \sum_{l=1}^{r}\, k_{l}$ and $\sum_{l=1}^{s}\, k_{l} < j \leq n$}}$, where $1 \leq r \leq s$. Under the given action, the orbits of this set are obtained from the relations $e_{r,j,t} \stackrel{\delta}{\mapsto} e_{r,j,t+1}$, where the index $t$ is taken modulo $k_{r}$, and $e_{r,j,t}=A_{\sum_{l=1}^{r-1}\, k_{l}+t,j}$.

\item\label{it:generc} $\setr{A_{i,j}}{\text{$\sum_{l=1}^{p-1}\, k_{l} +1 \leq i \leq \sum_{l=1}^{p}\, k_{l}$ and $\sum_{l=1}^{q-1}\, k_{l}+1 \leq j \leq \sum_{l=1}^{q}\, k_{l}$}}$, where $1 \leq p<q \leq s$. Under the given action, the orbits of this set are obtained from the relations $e_{p,q,v,t} \stackrel{\delta}{\mapsto} e_{p,q,v,t+1}$, where  $\displaystyle 1\leq v \leq \frac{k_p\cdot k_q}{\operatorname{lcm}(k_p,k_q)}$, $1\leq t \leq \operatorname{lcm}(k_p,k_q)$, and
\begin{equation*}
e_{p,q,v,t}=A_{\sum_{l=1}^{p-1}\, k_{l} +[2 - t]_{k_p},\sum_{l=1}^{q-1}\, k_{l}+[1 -t +v]_{k_q}},
\end{equation*}
where the notation $[x]_n$ means the positive integer between $1$ and $n$ that is congruent to $x$ modulo $n$.

\item\label{it:generd} $\setl{A_{i,j}}{\text{$\sum_{l=1}^{s}\, k_{l} < i < j \leq n$}}$. Under the given action, each $A_{i,j}$ is fixed.
\end{enumerate}
\end{thm}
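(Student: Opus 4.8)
The plan is to decompose the basis $\setl{A_{i,j}}{1\leq i<j\leq n}$ of $P_{n}/[P_{n},P_{n}]$ according to the two ``blocks'' containing the indices $i$ and $j$. Put $\kappa_{r}=\sum_{l=1}^{r}k_{l}$ for $0\leq r\leq s$, so that, by \req{thetai}, the symbols $\brak{1,\ldots,n}$ are partitioned into the $s$ consecutive blocks $B_{r}=\brak{\kappa_{r-1}+1,\ldots,\kappa_{r}}$, $1\leq r\leq s$, on which $\theta$ restricts to the cycle $\theta_{r}$, together with the tail $T=\brak{\kappa_{s}+1,\ldots,n}$ on which $\theta$ restricts to the identity. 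For a pair $1\leq i<j\leq n$, exactly one of the following occurs: $i$ and $j$ lie in the same block $B_{r}$ with $1\leq r\leq s$; $i\in B_{r}$ with $1\leq r\leq s$ and $j\in T$; $i\in B_{p}$ and $j\in B_{q}$ with $1\leq p<q\leq s$; or $i,j\in T$. These four cases correspond exactly to the families~(\ref{it:genera})--(\ref{it:generd}), which shows at once that the families are pairwise disjoint and that their union is the whole basis.

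Next I would record how each factor of $\delta$ acts. By \relem{0}, the factor $\delta_{\kappa_{l-1},k_{l}}$ occurring in \req{delta} acts on the $A_{i,j}$ through equation~\reqref{conjdelta} with $r=\kappa_{l-1}$ and $k=k_{l}$; reading off that formula, $\delta_{\kappa_{l-1},k_{l}}$ fixes $A_{i,j}$ whenever both $i$ and $j$ lie outside $B_{l}$, and otherwise it sends an index lying in $B_{l}$ to another index in $B_{l}$ while fixing every index outside $B_{l}$. Hence $\delta$ preserves both the block containing $i$ and the block containing $j$, which yields the invariance of each of the four families. It also shows that on a basis element of type~(\ref{it:genera}) or~(\ref{it:generb}) only the single factor $\delta_{\kappa_{r-1},k_{r}}$ acts non-trivially; on one of type~(\ref{it:generc}) only the two factors $\delta_{\kappa_{p-1},k_{p}}$ and $\delta_{\kappa_{q-1},k_{q}}$ do, the first moving $i$ within $B_{p}$ and the second moving $j$ within $B_{q}$; and on one of type~(\ref{it:generd}) every factor acts trivially.

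The orbit descriptions then follow by relabelling the statements of Propositions~\ref{prop:i},~\ref{prop:ii} and~\ref{prop:iii}. Type~(\ref{it:generd}) is immediate. For type~(\ref{it:genera}) one applies \repr{i} with $r$ replaced by $\kappa_{r-1}$ and $k$ by $k_{r}$: its $\frac{k_{r}-1}{2}$ orbits of length $k_{r}$, indexed by $h$, become the cycles $e_{r,h,t}\mapsto e_{r,h,t+1}$ once one checks, directly against the relevant lines of \req{conjdelta}, that the piecewise formula given for $e_{r,h,t}$ is correct and that the stated parameter ranges list each orbit exactly once. Type~(\ref{it:generb}) is handled in the same way from equation~\reqref{actAj} of \repr{ii}, and type~(\ref{it:generc}) from \repr{iii} with $(r,k,s,l)$ replaced by $(\kappa_{p-1},k_{p},\kappa_{q-1},k_{q})$; there the number of orbits, namely $k_{p}k_{q}/\operatorname{lcm}(k_{p},k_{q})=\gcd(k_{p},k_{q})$, each of length $\operatorname{lcm}(k_{p},k_{q})$, is exactly what results from superposing a transitive cyclic action of order $k_{p}$ on the first index with one of order $k_{q}$ on the second.

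The genuinely fiddly part of the argument, and the place where errors are easiest to make, is the combinatorial verification that the three explicit formulas for $e_{r,h,t}$, $e_{r,j,t}$ and $e_{p,q,v,t}$ are internally consistent: that as the stated parameters vary, each element of the relevant family is produced exactly once; that the one-step map agrees with \req{conjdelta}, including the direction in which each cycle is traversed; and, for type~(\ref{it:generc}), that the residue notation $[x]_{n}$ correctly encodes the two independent cyclic shifts modulo $k_{p}$ and modulo $k_{q}$. The assertion that there are exactly $\gcd(k_{p},k_{q})$ orbits in that case is then the elementary fact that the diagonal cyclic subgroup $\ang{(1,1)}$ of $\Z/k_{p}\Z\times\Z/k_{q}\Z$ has $\gcd(k_{p},k_{q})$ orbits.
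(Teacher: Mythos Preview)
Your proposal is correct and follows the same approach as the paper: the paper's proof consists of the single sentence that parts~(\ref{it:genera})--(\ref{it:generd}) follow from Propositions~\ref{prop:i},~\ref{prop:ii},~\ref{prop:iii} and~\ref{prop:iv} respectively. You have simply unpacked this citation, making explicit the block decomposition, the disjointness, the invariance, and which factors of $\delta$ act non-trivially on each family---all of which the paper leaves implicit in the statements of those propositions and of \relem{0}.
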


\begin{proof}
Parts~(\ref{it:genera})--(\ref{it:generd}) follow from Propositions~\ref{prop:i},~\ref{prop:ii},~\ref{prop:iii} and~\ref{prop:iv} respectively.
\end{proof}

Let $k_{1},\ldots,k_{s}$ be as in the statement of \reth{gener}, and let $\mathcal{B}$ denote the basis of $P_{n}/[P_{n},P_{n}]$ that consists of the following elements:
\begin{enumerate}
\item\label{it:basea} $e_{r,h,t}$, where $1\leq r\leq s$, $\displaystyle 1\leq h\leq \frac{k_r-1}{2}$ and $1\leq t\leq k_{r}$.
\item\label{it:baseb} $e_{r,j,t}$, where $1\leq r\leq s$, $\sum_{l=1}^{s}\, k_{l}<j<n$ and $1\leq t\leq k_{r}$.
\item\label{it:basec} $e_{p, q, v, t}$, where $1\leq p<q\leq s$, $\displaystyle 1\leq v \leq \frac{k_p\cdot k_q}{\operatorname{lcm}(k_p,k_q)}$ and $1\leq t\leq \operatorname{lcm}(k_p,k_q)$.
\item\label{it:based} $A_{i,j}$, where $\sum_{l=1}^{s}\, k_{l}<i<j\leq n$.
\end{enumerate}
An element of $\mathcal{B}$ will then be said to be of type~(a),~(b)~(c) or~(d) respectively.
If $A \in P_n/[P_n, P_n]$, it may thus be written uniquely in the following form:
\begin{equation}\label{eq:coeffA}
A=\prod_{\substack{1\leq r\leq s \\ 1\leq h\leq \frac{k_r-1}{2} \\ 1\leq t \leq k_r}}   e_{r,h,t}^{m_{r,h,t}} 
\prod_{\substack{1\leq r\leq s\\ \sum_{j=1}^{s} \,k_{j} +1 \leq j\leq n\\ 1\leq t \leq k_r}}  e_{r, j, t}^{m_{r, j, t}} 
\prod_{\substack{1\leq p < q \leq s \\ 1\leq v \leq \frac{k_p\cdot k_q}{\operatorname{lcm}(k_p,k_q)} \\ 1\leq t\leq \operatorname{lcm}(k_p,k_q)   }}   e_{p, q, v, t}^{m_{p, q, v, t}}
 \prod_{\sum_{j=1}^{s} \,k_{j} < i < j \leq n}   A_{i,j}^{m_{i,j}}.
\end{equation}
The following proposition allows us to decide whether $B_{n}/[P_{n},P_{n}]$ possesses elements of order $\operatorname{lcm}(k_1,\ldots,k_s)$.

\begin{prop}\label{prop:adelta}
Let $n\geq 3$, let $k_{0}=0$, let $3\leq k_1\leq k_2 \leq \ldots \leq k_s$ be odd such that $\sum_{j=1}^{s}\, k_{j}\leq n$, let $\delta \in B_{n}/[P_{n},P_{n}]$ be defined by \req{delta}, and let $A\in P_n/[P_n, P_n]$ be given by \req{coeffA}. Then the element $A\delta$ is of order $\operatorname{lcm}(k_1,\ldots,k_s)$ if and only if the following system of equations is satisfied:
\begin{equation}\label{eq:sysAdelta}
\begin{cases}
\displaystyle\sum_{\substack{1\leq t\leq k_r}} m_{r,h,t}  = 0 & \text{for all  $1\leq r\leq s$ and $\displaystyle 1\leq h\leq \frac{k_r-1}{2}$}\\
\displaystyle\sum_{\substack{1\leq t\leq k_r}} m_{r,j,t} = 0 & \text{for all $1\leq r\leq s$ and $\displaystyle\sum_{j=1}^{s} \,k_{j} +1 \leq j\leq n$}\\
\displaystyle\sum_{\substack{1\leq t\leq \operatorname{lcm}(k_p,k_q)}} m_{p, q,v, t} = 0 & \text{for all $1\leq p < q \leq s$ and $\displaystyle 1\leq v \leq \frac{k_p\cdot k_q}{\operatorname{lcm}(k_p,k_q)}$}\\
m_{i,j} = 0 & \text{for all $\displaystyle\sum_{j=1}^{s} \,k_{j} < i < j \leq n$}.
\end{cases}
\end{equation}
\end{prop}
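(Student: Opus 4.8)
The plan is to set $\ell_{0}=\operatorname{lcm}(k_{1},\ldots,k_{s})$ and first reduce the determination of the order of $A\delta$ to a single computation inside the free Abelian group $P_{n}/[P_{n},P_{n}]$. Since $A\in \ker{\overline{\sigma}}$, we have $\overline{\sigma}(A\delta)=\overline{\sigma}(\delta)=\theta$, which has order $\ell_{0}$ (see~\reqref{theta} and the text following it), so if $A\delta$ has finite order then that order is a multiple of $\ell_{0}$. On the other hand, $\delta$ is of order $\ell_{0}$ in $B_{n}/[P_{n},P_{n}]$, as was observed just after~\reqref{delta} using \relem{0} and the fact that the factors of $\delta$ commute pairwise; hence $\delta^{\ell_{0}}=1$ and $(A\delta)^{\ell_{0}}\in \ker{\overline{\sigma}}=P_{n}/[P_{n},P_{n}]$. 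Because $P_{n}/[P_{n},P_{n}]\cong \Z^{n(n-1)/2}$ is torsion free, if $(A\delta)^{d}=1$ with $d=\ell_{0}e$ a multiple of $\ell_{0}$ then $((A\delta)^{\ell_{0}})^{e}=1$ forces $(A\delta)^{\ell_{0}}=1$. Combining these remarks, $A\delta$ is of order $\ell_{0}$ if and only if $(A\delta)^{\ell_{0}}$ is trivial in $P_{n}/[P_{n},P_{n}]$.

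I would then expand $(A\delta)^{\ell_{0}}$. Writing the group law of $P_{n}/[P_{n},P_{n}]$ additively and letting $\delta^{m}\cdot B$ denote the image of $B\in P_{n}/[P_{n},P_{n}]$ under conjugation by $\delta^{m}$, inserting $\delta^{-m}\delta^{m}$ between consecutive factors and using $\delta^{\ell_{0}}=1$ gives
\[
(A\delta)^{\ell_{0}}=\sum_{m=0}^{\ell_{0}-1}\delta^{m}\cdot A
\]
in $P_{n}/[P_{n},P_{n}]$; note that, in contrast with the computation in the proof of \repr{Torcaonimpar}, no inhomogeneous term $\delta^{\ell_{0}}$ appears here precisely because $\delta$ has order exactly $\ell_{0}$. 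Next I would substitute the expansion~\reqref{coeffA} of $A$ in the basis $\mathcal{B}$ and invoke \reth{gener}: conjugation by $\delta$ permutes $\mathcal{B}$, and the $\delta$-orbit of a basis element of type~(a),~(b),~(c) or~(d) has length $k_{r}$, $k_{r}$, $\operatorname{lcm}(k_{p},k_{q})$ or $1$ respectively, each of which divides $\ell_{0}$. Consequently, for a basis element $e$ lying in a $\delta$-orbit $\mathcal{O}$ of length $\lambda$, one has $\sum_{m=0}^{\ell_{0}-1}\delta^{m}\cdot e=(\ell_{0}/\lambda)\sum_{e'\in \mathcal{O}}e'$.

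Collecting the contributions orbit by orbit, $(A\delta)^{\ell_{0}}$ is thus a $\Z$-linear combination of the elements $\sum_{e'\in \mathcal{O}}e'$, one for each $\delta$-orbit $\mathcal{O}\subseteq \mathcal{B}$, the coefficient of $\sum_{e'\in \mathcal{O}}e'$ being the positive integer $\ell_{0}/\lambda$ times the sum of the exponents appearing in~\reqref{coeffA} of the basis elements lying in $\mathcal{O}$. Since distinct orbits have disjoint supports in the basis $\mathcal{B}$, these orbit sums are $\Z$-linearly independent, so $(A\delta)^{\ell_{0}}$ vanishes if and only if, for every orbit $\mathcal{O}$, the associated sum of exponents is zero. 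Reading this condition off for the orbit types~(a),~(b),~(c) and~(d) in turn, and recalling the parametrisation of the orbits given in \reth{gener}, yields exactly the four families of equations in~\reqref{sysAdelta}, which proves the proposition. The only genuinely delicate points are the reduction carried out in the first paragraph and the bookkeeping of the orbit structure supplied by \reth{gener}; once these are in place the rest is a routine linear computation, the crucial feature being that $\delta^{\ell_{0}}=1$ removes the inhomogeneous term that was present in \repr{Torcaonimpar}.
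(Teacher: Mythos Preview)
Your proof is correct and follows essentially the same approach as the paper's: both reduce the question to the vanishing of $(A\delta)^{\ell_{0}}$ in $P_{n}/[P_{n},P_{n}]$, expand this power via $\delta^{\ell_{0}}=1$ as $\sum_{m=0}^{\ell_{0}-1}\delta^{m}\cdot A$, and then use the orbit decomposition of \reth{gener} to read off the system~\reqref{sysAdelta}. Your reduction in the first paragraph is slightly more explicit than the paper's (in particular the torsion-free argument, while not strictly needed for the equivalence as stated, is a harmless addition), and your observation that the inhomogeneous term of \repr{Torcaonimpar} disappears here because $\delta^{\ell_{0}}=1$ is exactly the point.
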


\begin{proof} 
The argument is similar to that of the proof of \repr{Torcaonimpar}.
Let $A$ be written in the form of \req{coeffA}, and let $\ell=\operatorname{lcm}(k_1,\ldots,k_s)$. Since $A\in P_{n}/[P_{n},P_{n}]$, $\overline{\sigma}(A\delta)=\overline{\sigma}(\delta)=\theta$, where $\theta$ is as defined in \req{theta}. The fact that $\theta$ is of order $\ell$ implies that the order of $A\delta$, if it is finite, cannot be less than $\ell$. Since $\delta$ is of order $\ell$ by \relem{0}, it follows that:
\begin{equation}\label{eq:Adelta}
(A\delta)^{\ell}= \prod_{j=0}^{\ell-1}\, \delta^{j} A\delta^{-j}.
\end{equation}
Let $w_{1}\in \mathcal{B}$, and let $q$ denote the length of the orbit of $w_{1}$ under the action of conjugation by $\delta$. By \reth{gener}, $q=k_{r}$ if $w_{1}$ is of type~(a) or~(b), 
$q=\operatorname{lcm}(k_p,k_q)$ if $w_{1}$ is of type~(c), and $q=1$ if $w_{1}$ is of type~(d). For $i=1,\ldots,q$, let $w_{i}=\delta^{i-1}w_{1}\delta^{-(i-1)}$ be the (distinct) elements of the orbit of $w_{1}$. So $\delta^{q}w_{i}\delta^{-q}=w_{i}$, and since $q$ divides $\ell$, we have:
\begin{equation*}
\prod_{j=0}^{\ell-1} \delta^{j} w_{i} \delta^{-j}= \prod_{\substack{0\leq j\leq q-1\\ 0\leq k\leq \frac{\ell}{q}-1}} \delta^{kq+j} w_{i} \delta^{-(kq+j)}=\left(\prod_{0\leq j\leq q-1} \delta^{j} w_{i} \delta^{-j}\right)^{\ell/q}=(w_{1}\cdots w_{q})^{\ell/q}.
\end{equation*}
If $m_{i}\in \Z$ then for $i=1,\ldots,q$, we have:
\begin{equation}\label{eq:prodorbit}
\left(\prod_{i=1}^{q} w_{i}^{m_{i}} \delta\right)^{\ell}=\prod_{j=0}^{\ell-1} \delta^{j} \left( \prod_{i=1}^{q} w_{i}^{m_{i}}\right) \delta^{-j}= \prod_{i=1}^{q} (w_{1}\cdots w_{q})^{m_{i}\ell/q}= (w_{1}\cdots w_{q})^{\frac{\ell}{q}\sum_{i=1}^{q} m_{i}}.
\end{equation}
Combining equations~\reqref{Adelta} and~\reqref{prodorbit} and using the fact that the orbits of the elements of $\mathcal{B}$ are invariant under conjugation by $\delta$, it follows that $(A\delta)^{\ell}=1$ if and only if
\begin{equation}\label{eq:summi}
\text{$\sum_{i=1}^{q} m_{i}=0$ for all $w_{1}\in \mathcal{B}$,}
\end{equation}
where for $i=1,\ldots,q$, $m_{i}$ is the coefficient of $w_{i}$ that appears in \req{coeffA}. Taking $w_{1}$ to be successively the element $e_{r,h,1}$ of type~(a), the element $e_{r,j,1}$ of type~(b), the element $e_{p, q, v, 1}$ of type~(c), and the element $A_{i,j}$ of type~(d), we conclude that $(A\delta)^{\ell}=1$ if and only if the system of equations~\reqref{sysAdelta} is satisfied, and this completes the proof of the proposition.
\end{proof}

We now prove \reth{classconj} that concerns the conjugacy classes of finite-order elements of $B_n/[P_n, P_n]$, and which is the main result of this section.



\begin{proof}[Proof of \reth{classconj}]
Let $\theta\in \sn[n]$ be of order $k$. Conjugating $\theta$ if necessary, we may suppose that there exist odd integers $3\leq k_{1}\leq \ldots\leq k_{s}$ such that $\sum_{i=1}^{s} \, k_{i}\leq n$ and $k=\operatorname{lcm}(k_1, \ldots, k_s)$ for which $\theta$ is of the form given by \req{theta}, and where the elements $\theta_{i}$ of that equation are defined by \req{thetai}. Let $\delta\in B_{n}/[P_{n},P_{n}]$ be defined by \req{delta}, which we know to be of order $k$ using \relem{0}. Now let $\beta\in B_{n}/[P_{n},P_{n}]$ be an element of finite order such that $\overline{\sigma}(\beta)=\theta$. By \relem{eleconj}, $\beta$ is of order $k$. To prove \reth{classconj}, it suffices to show that $\beta$ and $\delta$ are conjugate. Since they have the same permutation, there exists $A\in B_{n}/[P_n, P_n]$ such that $\beta=A \delta$, and we may write $A$ in the form of \req{coeffA}. With the notation of the proof of \repr{adelta}, \req{summi} holds by that proposition because $A\delta$ is of order $\operatorname{lcm}(k_1, \ldots, k_s)$. 
To prove the theorem, it suffices to show that $A\delta$ and $\delta$ are conjugate. To do so, we will exhibit $X\in P_n/ [P_n, P_n]$ for which $XA\delta X^{-1} = \delta$. This is equivalent to the following relation:
\begin{equation}\label{eq:adelta}
\text{$XA\delta X^{-1} \delta^{-1}=1$ in $P_n/ [P_n, P_n]$.}
\end{equation}
We start by writing $X$ in the form of \req{coeffA} as follows:
\begin{equation}\label{eq:coeffX}
X=\prod_{\substack{1\leq r\leq s \\ 1\leq h\leq \frac{k_r-1}{2} \\ 1\leq t \leq k_r}}   e_{r,h,t}^{x_{r,h,t}} 
\prod_{\substack{1\leq r\leq s\\ \sum_{j=1}^{s} \,k_{j} +1 \leq j\leq n\\ 1\leq t \leq k_r}}  e_{r, j, t}^{x_{r, j, t}} 
\prod_{\substack{1\leq p< q\leq s \\ 1\leq v \leq \frac{k_p\cdot k_q}{\operatorname{lcm}(k_p,k_q)} \\ 1\leq t\leq \operatorname{lcm}(k_p,k_q)   }}   e_{p, q, v, t}^{x_{p, q, v, t}}
 \prod_{\sum_{j=1}^{s} \,k_{j} < i < j \leq n}   A_{i,j}^{x_{i,j}},
\end{equation}
where the exponents are the coefficients of the elements of $\mathcal{B}$.
As we saw in the proof of \repr{adelta}, it suffices to study the subsystems obtained from \req{adelta} that correspond to the orbits of the action of conjugation by $\delta$. In particular, if $w_{1}\in \mathcal{B}$ and $w_{i}=\delta^{i-1}w_{1}\delta^{-(i-1)}$ are the elements of the orbit of $w_{1}$, where $i=1,\ldots,q$, then it follows from equations~\reqref{coeffA},~\reqref{adelta} and~\reqref{coeffX} that:
\begin{equation*}
\text{$\left( \prod_{i=1}^{q} w_{i}^{x_{i}} \right) \left( \prod_{i=1}^{q} w_{i}^{m_{i}} \right) \left( \prod_{i=1}^{q} w_{i}^{-x_{i-1}} \right)=1$ in $P_n/ [P_n, P_n]$,}
\end{equation*}
where $m_{i}$ (resp.\ $x_{i}$) is the coefficient of $w_{i}$ appearing in \req{coeffA} (resp.\ in \req{coeffX}), and $x_{0}=x_{q}$. We conclude that:
\begin{equation}\label{eq:xisystem}
\text{$x_{i-1}-x_{i}=m_{i}$ for all $i=1,\ldots,q$ and for all choices of $w_{1}\in \mathcal{B}$.}
\end{equation}
Choosing $x_{q}\in \Z$ arbitrarily, the solution of the subsystem of equations obtained by taking $i=2,\ldots,q$ in \req{xisystem} is given by $x_{i-1}=x_{q}+\sum_{j=i}^{q}\, m_{j}$, which determines $x_{l}$ for all $l=1,\ldots,q$. The remaining equation, corresponding to $i=1$, is satisfied, because:
\begin{equation*}
\text{$x_{q}-x_{1}=-\sum_{j=2}^{q}\, m_{j}=m_{1}$ by \req{summi}.} 
\end{equation*}
Hence the system of equations~\reqref{xisystem} possesses solutions for all choices of $w_{1}\in \mathcal{B}$, and so \req{adelta} admits solutions, from which it follows that $A\delta$ is conjugate to $\delta$ by an element of $P_{n}/[P_{n},P_{n}]$. This proves the first part of the statement. The second part is then a direct consequence.
\end{proof}

\begin{rems}\mbox{} \label{rem:abelian}
\begin{enumerate}[(a)]
\item The number of conjugacy classes of permutations of order $k$ in $\sn$ is equal to the number of partitions $(n_{1},\ldots,n_{r})$ of $n$, where $n_{i}\in \N$, $n_{1}\leq n_{2}\leq \ldots \leq n_{r}$, $\sum_{i=1}^{r} n_{i}=n$ and $\operatorname{lcm}(n_{1}, \ldots, n_{r})=k$.
	
\item It follows from \reco{torsion} and \reth{classconj} that if $k$ is odd, $\overline{\sigma}$ induces a bijection between the set of conjugacy classes of elements of order $k$ in $B_n/[P_n, P_n]$ and the set of conjugacy classes of elements of order $k$ in $\sn$. The same result also holds for finite cyclic subgroups.
\item\label{it:abelian3} 
Given an Abelian subgroup $H$ of finite odd order of $\sn$, we saw in \reth{realAbelian} that $B_n/[P_n,P_n]$ contains a subgroup $G$ isomorphic to $H$. An open and more difficult question is whether $B_n/[P_n,P_n]$ contains a subgroup $G$ such that $\overline{\sigma}(G)=H$.
\end{enumerate}
\end{rems}


\section{Finite non-Abelian subgroups of $B_n/[P_n, P_n]$}\label{sec:nonAbelian}

As we saw in \reth{2TorCryst} and \relem{eleconj}, any finite subgroup of $B_n/[P_n, P_n]$ is of odd order, and embeds in $\sn$. Following the discussion of the previous sections, it is natural to try to characterise the isomorphism classes of the finite subgroups of  $B_n/[P_n, P_n]$ as well as their conjugacy classes. 
For the question of isomorphism classes, this was achieved for finite Abelian subgroups in \reth{realAbelian}, and for that of conjugacy classes, was solved in \reth{classconj} and \reco{torsion} for cyclic groups. Going a step further, we may also ask whether $B_n/[P_n, P_n]$ possesses finite non-Abelian subgroups. Since any group of order $9$ or $15$ is Abelian, the smallest non-Abelian group of odd order is the \emph{Frobenius group} of order $21$, which we denote by $\mathcal{F}$. It admits the following presentation:
\begin{equation}\label{eq:deffrob}
\mathcal{F}=\setangr{s,t}{\text{$s^3=t^7=1$, $sts^{-1}=t^2$}}.
\end{equation}
The group $\mathcal{F}$ is thus a semi-direct product of the form $\Z_{7}\rtimes \Z_{3}$, and it possesses six (resp.\ fourteen) elements of order $7$ (resp.\ of order $3$). As we shall see in \relem{frob}, $\mathcal{F}$ embeds in $\sn[7]$, and as a first step in deciding whether $B_n/[P_n, P_n]$ possesses finite non-Abelian subgroups, one may ask whether $\mathcal{F}$ embeds in $B_7/[P_7, P_7]$. The main result of this section, \reth{frob}, shows that the answer is positive. \reth{oddtorsion}(\ref{it:oddtorsiona}) then implies that $\mathcal{F}$ embeds in $B_n/[P_n, P_n]$ for all $n\geq 7$.  In \reth{uniqueconj}, we show that in $B_7/[P_7, P_7]$, there is a single conjugacy class of subgroups isomorphic to $\mathcal{F}$. The general questions regarding the embedding in $B_n/[P_n, P_n]$ of an arbitrary finite non-Abelian group of odd order (for large enough $n$) and the number of its conjugacy classes remain open.


%

We first exhibit a subgroup $\mathcal{F}_{0}$ of $\sn[7]$ that is isomorphic to $\mathcal{F}$. We shall see later in \repr{frob} that any subgroup of $\sn[7]$ that is isomorphic to $\mathcal{F}$ is conjugate to $\mathcal{F}_{0}$. In what follows, we consider the following elements of $\sn[7]$: 
\begin{equation}\label{eq:alphabeta}
\text{$\alpha=(1,3,4,2,5,6,7)$ and $\beta=(1,2,3)(4,5,6)$.}
\end{equation}
Let $\mathcal{F}_{0}$ denote the subgroup of $\sn[7]$ generated by $\brak{\alpha,\beta}$. As noted previously, we read our permutations from left to right, to coincide with our convention for the composition of braids.

\begin{lem}\label{lem:frob}
The subgroup $\mathcal{F}_{0}$ of $\sn[7]$ is isomorphic to $\mathcal{F}$. Further, if $G$ is a subgroup of $\sn[7]$ that is isomorphic to $\mathcal{F}$ then $G$ is generated by two elements $\alpha'$ and $\beta'$, where $\alpha'$ is a $7$-cycle, the cycle type of $\beta'$ is $(3,3,1)$, and $\beta' \alpha'  \beta'^{-1}=\alpha'^2$. 
\end{lem}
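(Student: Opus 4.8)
The plan is to treat the two assertions of the lemma in turn: the isomorphism $\mathcal{F}_{0}\cong\mathcal{F}$, and then the structural description of an arbitrary subgroup of $\sn[7]$ isomorphic to $\mathcal{F}$.

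For the isomorphism, I would first note that $\alpha$ in \reqref{alphabeta} is a $7$-cycle, hence of order $7$, and that $\beta$, of cycle type $(3,3,1)$, has order $3$. A short direct computation, performed with the left-to-right composition convention fixed in \resec{SecCG}, will confirm that $\beta\alpha\beta^{-1}=\alpha^{2}$ (both sides being the $7$-cycle $(1,4,5,7,3,2,6)$). Comparing this with the presentation \reqref{deffrob} of $\mathcal{F}$, the assignment $s\mapsto\beta$, $t\mapsto\alpha$ extends to a group homomorphism $\mathcal{F}\to\sn[7]$, necessarily surjective onto $\mathcal{F}_{0}=\ang{\alpha,\beta}$. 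Since $\mathcal{F}_{0}$ contains elements of orders $3$ and $7$, its order is a multiple of $\operatorname{lcm}(3,7)=21=\ord{\mathcal{F}}$, so the homomorphism is injective, hence is an isomorphism onto $\mathcal{F}_{0}$.

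For the second assertion, let $G\leq\sn[7]$ with $G\cong\mathcal{F}$, so that $\ord{G}=21$. By Sylow's theorems, the number of Sylow $7$-subgroups of $G$ divides $3$ and is congruent to $1$ modulo $7$, so it equals $1$; write $P=\ang{\alpha'}$ for this normal subgroup, where $\alpha'$ has order $7$. As the only elements of order $7$ in $\sn[7]$ are $7$-cycles, $\alpha'$ is a $7$-cycle, so $P$ --- and therefore $G$ --- acts transitively on $\brak{1,\ldots,7}$; hence each point stabiliser $G_{i}$ has order $3$ and is a Sylow $3$-subgroup of $G$. The delicate step is to determine the cycle type of the order-$3$ elements of $G$. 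Because all Sylow $3$-subgroups are conjugate in $G$, and an element and its inverse have the same cycle type, all $14$ elements of order $3$ in $G$ share one cycle type $\tau\in\brak{(3,3,1),(3,1,1,1,1)}$. To exclude $(3,1,1,1,1)$, I would observe that for a point stabiliser $Q=\ang{g}$ the fixed-point set of $g$ equals $\set{p}{G_{p}=Q}$, since an inclusion $Q\subseteq G_{p}$ of subgroups of common order $3$ is an equality; thus $\brak{1,\ldots,7}$ is partitioned into the nonempty sets $\set{p}{G_{p}=Q}$ as $Q$ ranges over the distinct point stabilisers, each such set having cardinality equal to the number of points fixed by the corresponding order-$3$ element. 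If $\tau=(3,1,1,1,1)$ every block would have cardinality $4$, which is impossible since $4$ does not divide $7$. Therefore $\tau=(3,3,1)$.

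Finally, I would fix any element $\beta'$ of order $3$ in $G$; since $P$ is normal and $\beta'$ has order $3$, the subgroup $\ang{\alpha',\beta'}$ contains $P$ and has order divisible by $3$, so $\ang{\alpha',\beta'}=G$. Normality of $P$ gives $\beta'\alpha'\beta'^{-1}=\alpha'^{j}$ for some $j$, and conjugation by $\beta'$ is an automorphism of $\ang{\alpha'}\cong\Z_{7}$ of order dividing $3$; the value $j=1$ is impossible because $G$ is non-Abelian, so $j\in\brak{2,4}$. If $j=4$, one replaces $\beta'$ by $\beta'^{2}$: this is again an element of order $3$, still generates $G$ together with $\alpha'$, and satisfies $\beta'^{2}\alpha'\beta'^{-2}=\alpha'^{16}=\alpha'^{2}$ because $16\equiv2\pmod 7$. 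In every case this yields a generating pair $\alpha',\beta'$ of $G$ with $\alpha'$ a $7$-cycle, $\beta'$ of cycle type $(3,3,1)$, and $\beta'\alpha'\beta'^{-1}=\alpha'^{2}$, as required. I expect the only genuine obstacle to be the exclusion of the cycle type $(3,1,1,1,1)$; the identity $\beta\alpha\beta^{-1}=\alpha^{2}$ and the bookkeeping with $\operatorname{Aut}(\Z_{7})$ are routine.
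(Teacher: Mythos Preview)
Your argument is correct. The first part (the isomorphism $\mathcal{F}_0\cong\mathcal{F}$) matches the paper's approach, and you are more explicit than the paper in justifying that one can always arrange $\beta'\alpha'\beta'^{-1}=\alpha'^2$ rather than $\alpha'^4$.

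The genuine difference is in how the cycle type $(3,1,1,1,1)$ is excluded. The paper argues directly from the relation $\beta'\alpha'\beta'^{-1}=\alpha'^2$: if $\beta'$ fixes four of the seven symbols, then by pigeonhole two of these fixed symbols, say $m_j$ and $m_k$, are consecutive in the $7$-cycle $\alpha'$, and evaluating both sides of the conjugation relation at $m_j$ gives $m_k\neq\alpha'(m_k)$, a contradiction. Your route is instead structural: you use transitivity of $G$ on $\{1,\ldots,7\}$ to see that every point stabiliser has order $3$, identify the fixed-point set of a generator of a point stabiliser $Q$ with $\{p: G_p=Q\}$, and obtain a partition of a $7$-element set into blocks whose common size would be $4$ in the $(3,1,1,1,1)$ case. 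The paper's argument is shorter and uses only the specific relation at hand; your argument is more conceptual and would adapt unchanged to any Frobenius group $\Z_p\rtimes\Z_q$ acting on $p$ letters. Either is perfectly acceptable here.
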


\begin{proof}
The first part of the statement is obtained from a straightforward computation using equations~\reqref{deffrob} and~\reqref{alphabeta}. For the second part, if $G$ is a subgroup of $\sn[7]$ that is isomorphic to $\mathcal{F}$ then it possesses a generating set $\brak{\alpha',\beta'}$, where $\alpha'$ is a $7$-cycle, $\beta'$ is of order $3$, and $\beta' \alpha'  \beta'^{-1}=\alpha'^2$. The cycle type of $\beta'$ is either $(3,3,1)$ or $(3,1,1,1,1)$. Suppose that we are in the second case. Then $\beta'=(n_1,n_2,n_3)$ where $n_1,n_{2}$ and $n_{3}$ are distinct elements of $\brak{1,\ldots,7}$. Hence the remaining four elements $m_1, m_2, m_3$ and $m_4$ of $\brak{1,2,3,4,5,6,7} \setminus \brak{n_1,n_2,n_3}$ are fixed by $\beta'$. So there are two consecutive elements of the $7$-cycle $\alpha'$, denoted by $m_j, m_k$, that belong to $\brak{m_1, m_2, m_3, m_4}$. Since $\beta' \alpha'  \beta'^{-1}=\alpha'^2$, we have 
$\beta \alpha  \beta^{-1} (m_j)=\alpha \beta^{-1}(m_j)=\beta^{-1}  (m_k)=m_k$, but this is different from $\alpha^2(m_j)=\alpha(m_k)$. This yields a contradiction, and shows that the cycle type of $\beta'$ is $(3,3,1)$.
\end{proof}

The rest of this section is devoted to proving that $\mathcal{F}$ embeds in $B_7/[P_7, P_7]$ and to showing that in $B_7/[P_7, P_7]$, there is a single conjugacy class of subgroups isomorphic to $\mathcal{F}$. In this quotient, we define:
\begin{equation}\label{eq:defxy}
\text{$x=\sigma_2\sigma_1^{-1}\sigma_5\sigma_{4}^{-1}$ and $y=\sigma_2\sigma_3 \sigma_6 \sigma_5\sigma_{4}\sigma_3^{-1} \sigma_2^{-1}\sigma_1^{-1} \sigma_3^{-1} \sigma_2^{-1}$.}
\end{equation}
Then $\overline{\sigma}(y)=\alpha$ and using the notation of \req{defdelta}, $y=\sigma_2\sigma_3\delta_{0,7} \sigma_3^{-1}\sigma_2^{-1}$. So $y$ is of order $7$ by \relem{0}. Similarly, $\overline{\sigma}(x)=\beta$, $x=\delta_{0,3}\delta_{3,3}$, and $x$ is of order $3$ (see the discussion on page~\pageref{eq:delta} just after \req{delta}). We now prove \reth{frob} that asserts the existence of a subgroup of $B_7/[P_7, P_7]$ isomorphic to the Frobenius group $\mathcal{F}$.


\begin{proof}[Proof of \reth{frob}]
Consider the subgroup $H$ of $B_7/[P_7, P_7]$ generated by $\brak{x,y}$. By the above comments, we know that $\overline{\sigma}(y)=\alpha$ and $\overline{\sigma}(x)=\beta$, therefore $\overline{\sigma}(H)=\mathcal{F}_0$. 
Let
\begin{equation*}
\gamma=\sigma_2^{-1}\sigma_4^{-1} \sigma_5\sigma_4\sigma_6^{-1} \sigma_5^{-1} \sigma_1\sigma_2^{-1}.
\end{equation*}
Using the Artin relations~\reqref{artin1} and~\reqref{artin2}, and \req{conjugAij2}, we have:
\begin{align*}
\gamma xyx^{-1}y^{-2} \gamma^{-1}=&\sigma_2^{-1}\sigma_4^{-1} \sigma_5\sigma_4\sigma_6^{-1} \sigma_5^{-1} \sigma_1\sigma_2^{-1}\ldotp  \sigma_2\sigma_1^{-1}\sigma_5\sigma_{4}^{-1}\ldotp  \sigma_2\sigma_3 \sigma_6 \sigma_5\sigma_{4}\sigma_3^{-1} \sigma_2^{-1} \sigma_1^{-1} \sigma_3^{-1} \sigma_2^{-1}\ldotp\\
&\sigma_{4}\sigma_5^{-1}\sigma_1\sigma_2^{-1} \ldotp \sigma_2\sigma_3 \sigma_1 \sigma_2\sigma_{3}\sigma_4^{-1} \sigma_5^{-1}\sigma_6^{-1} \sigma_1 \sigma_2\sigma_{3}\sigma_4^{-1} \sigma_5^{-1}\sigma_6^{-1} \sigma_3^{-1}\sigma_2^{-1} \ldotp\\
& \sigma_2 \sigma_1^{-1} \sigma_5\sigma_6 \sigma_{4}^{-1}\sigma_5^{-1}\sigma_4 \sigma_2\\
=& \sigma_4^{-1} \sigma_5^{2} \sigma_4^{-1} \sigma_3  \sigma_{4} \sigma_2^{-1} \sigma_1^{-1} \sigma_3^{-1} \sigma_2^{-1} \sigma_{4}\sigma_5^{-1}\sigma_1^{2} \sigma_2 \sigma_3\sigma_{2}\sigma_4^{-1} \sigma_5^{-1} \sigma_1 \sigma_2\sigma_{3} \sigma_4^{-1} \sigma_5^{-1}\ldotp\\
&\sigma_6^{-1} \sigma_5^{-1} \sigma_3^{-1} \sigma_1^{-1} \sigma_5\sigma_6 \sigma_5\sigma_{4}^{-1}\sigma_5^{-1} \sigma_2\\
=& A_{4,6} A_{4,5}^{-1}\ldotp A_{4,5}\ldotp \sigma_3  \sigma_{4} \sigma_2^{-1} \sigma_1^{-1} \sigma_{4} \sigma_3 \sigma_4^{-1} \sigma_5^{-1}  \sigma_{2} \sigma_4^{-1} \sigma_5^{-1} \sigma_1 \sigma_2 \sigma_4^{-1} \sigma_3^{-1} \sigma_{4}\sigma_1^{-1} \ldotp\\
& \sigma_{4}^{-1}\sigma_5^{-1} \sigma_2\\
=& A_{4,6} \ldotp \sigma_3 \sigma_{4}^{2} \sigma_2^{-1} \sigma_1^{-1} \sigma_3 \sigma_4^{-1} \sigma_5^{-1}  \sigma_{2} \sigma_4^{-1} \sigma_5^{-1} \sigma_1 \sigma_2 \sigma_4^{-1} \sigma_3^{-1} \sigma_1^{-1}  \sigma_5^{-1} \sigma_2\\
=& A_{4,6} A_{3,5}\ldotp \sigma_3 \sigma_2^{-1} \sigma_1^{-1} \sigma_3 \sigma_4^{-2} \sigma_5^{-1} \sigma_4^{-1} \sigma_{1}  \sigma_2 \sigma_1 \sigma_4^{-1} \sigma_3^{-1} \sigma_1^{-1}  \sigma_5^{-1} \sigma_2\\
=& A_{4,6} A_{3,5} A_{2,5}^{-1} \ldotp \sigma_3 \sigma_2^{-1} \sigma_3 \sigma_5^{-1} \sigma_4^{-2} \sigma_2 \sigma_3^{-1} \sigma_5^{-1} \sigma_2\\
=& A_{4,6} A_{3,5} A_{2,5}^{-1} A_{2,6}^{-1} \ldotp \sigma_3 \sigma_2^{-1} \sigma_3 \sigma_5^{-2} \sigma_2 \sigma_3^{-1} \sigma_2\\
=& A_{4,6} A_{3,5} A_{2,5}^{-1} A_{2,6}^{-1} A_{5,6}^{-1} \ldotp \sigma_3 \sigma_2^{-1} \sigma_3 \sigma_2 \sigma_3^{-1} \sigma_2\\
=& A_{4,6} A_{3,5} A_{2,5}^{-1} A_{2,6}^{-1} A_{5,6}^{-1} A_{3,4} A_{2,3} A_{2,4}^{-1}.
\end{align*}
Using \req{conjugAij2} once more, we obtain:
\begin{align}\label{eq:frob1}
xyx^{-1}y^{-2} &= \gamma^{-1} A_{4,6} A_{3,5} A_{2,5}^{-1} A_{2,6}^{-1} A_{5,6}^{-1} A_{3,4} A_{2,3} A_{2,4}^{-1} \gamma\notag\\
&= A_{4,7} A_{1,7} A_{1,6}A_{2,7}^{-1}A_{2,6}^{-1}A_{2,4}^{-1}A_{1,2}A_{4,6}^{-1}.
\end{align}
Note that this shows that $xyx^{-1}y^{-2}$ is non trivial in the free Abelian group $P_7/[P_7,P_7]$, which implies that $H$ is not isomorphic to $\mathcal{F}$. We now look for an element $N\in P_7/[P_7, P_7]$ such that if $v=Ny$ then the subgroup $\ang{x,v}$ is isomorphic to $\mathcal{F}$, where $v$ is of order $7$, $\overline{\sigma}(v)=\alpha$ and $xvx^{-1}=v^{2}$. This last equality gives rise to the following equivalences:
\begin{equation}\label{eq:xvx}
xvx^{-1}=v^{2} \Longleftrightarrow xNy x^{-1}=Ny Ny \Longleftrightarrow xyx^{-1}y^{-2}= xN^{-1}x^{-1} \ldotp N\ldotp yNy^{-1}.
\end{equation}
We seek solutions $N\in P_7/[P_7, P_7]$ of \req{xvx} taking into account \req{frob1} and the fact that $Ny$ is of order $7$. In order to do so, we use additive notation, and we write $N$ in terms of the basis $\brak{A_{i,j}}_{1\leq i<j\leq 7}$ of $P_7/[P_7, P_7]$ as follows: 
\begin{equation}\label{eq:defN}
N=\sum_{1\leq i<j\leq 7} p_{i,j} A_{i,j},
\end{equation}
where $p_{i,j}\in \Z$ for all $1\leq i<j\leq 7$. 
%
By \req{conjugAij2} and \repr{i}, we see that the orbits under the action of conjugation by $y$ are of the form:
\begin{equation}\label{eq:frob4}
\begin{cases}
A_{1,2} \mapsto A_{4,7} \mapsto A_{3,6} \mapsto A_{1,5} \mapsto A_{2,7} \mapsto A_{4,6} \mapsto A_{3,5} \mapsto A_{1,2}\\
A_{1,3} \mapsto A_{1,7} \mapsto A_{6,7} \mapsto A_{5,6} \mapsto A_{2,5} \mapsto A_{2,4} \mapsto A_{3,4} \mapsto A_{1,3}\\
A_{1,4} \mapsto A_{3,7} \mapsto A_{1,6} \mapsto A_{5,7} \mapsto A_{2,6} \mapsto A_{4,5} \mapsto A_{2,3} \mapsto A_{1,4},
\end{cases}
\end{equation}
and the orbits under the action of conjugation by $x$ are of the form:
\begin{equation}\label{eq:frob3}
\left\{\begin{aligned}
& A_{1,2} \mapsto A_{1,3} \mapsto A_{2,3} \mapsto A_{1,2} &&
A_{4,6} \mapsto A_{5,6} \mapsto A_{4,5} \mapsto A_{4,6}\\
& A_{2,7} \mapsto A_{1,7} \mapsto A_{3,7} \mapsto A_{2,7} &&
A_{4,7} \mapsto A_{6,7} \mapsto A_{5,7} \mapsto A_{4,7}\\
& A_{3,6} \mapsto A_{2,5} \mapsto A_{1,4} \mapsto A_{3,6}&&
A_{1,5} \mapsto A_{3,4} \mapsto A_{2,6} \mapsto A_{1,5}\\
& A_{3,5} \mapsto A_{2,4} \mapsto A_{1,6} \mapsto A_{3,5}.&&
\end{aligned}\right.
\end{equation}
The first (resp.\ second) line of \reqref{frob3} may be obtained by applying \repr{i} (resp.\ \repr{ii}) and \repr{iv}, and the last two lines follow from \repr{iii}. Arguing in a manner similar to that of the proof of \repr{Torcaonimpar}, and using the fact that $y$ is of order $7$, we obtain:
\begin{equation*}
0 =(Ny)^{7}=\sum_{k=0}^{6} y^{k}Ny^{-k}=\sum_{k=0}^{6} y^{k}\left(\sum_{1\leq i<j\leq 7} p_{i,j} A_{i,j}\right)y^{-k}=\sum_{1\leq i<j\leq 7} p_{i,j} \left( \sum_{k=0}^{6} y^{k}A_{i,j} y^{-k} \right),
\end{equation*}
from which it follows that the sum of the coefficients corresponding to the elements of each of the orbits given in~\reqref{frob4} is zero:
\begin{equation}\label{eq:sumxyz}
\left\{\begin{aligned}
p_{1,2}+p_{4,7}+p_{3,6}+p_{1,5}+p_{2,7}+p_{4,6}+p_{3,5}&=0\\
p_{1,3}+p_{1,7}+p_{6,7}+p_{5,6}+p_{2,5}+p_{2,4}+p_{3,4}&=0\\
p_{1,4}+p_{3,7}+p_{1,6}+p_{5,7}+p_{2,6}+p_{4,5}+p_{2,3}&=0.
\end{aligned}\right.
\end{equation}
Using equations~\reqref{frob4} and~\reqref{frob3} to compute first $xNx^{-1}$ and $yNy^{-1}$ and then equations~\reqref{frob1}, \reqref{xvx} and~\reqref{defN}, we obtain the following systems of equations:
\begin{equation}\label{eq:frob5}
\left\{
\begin{aligned}
A_{1,2}\colon & & -p_{2,3}+p_{1,2}+p_{3,5} &= 1 &
A_{1,3}\colon & & -p_{1,2}+p_{1,3}+p_{3,4} &= 0\\
A_{4,7}\colon & & -p_{5,7}+ p_{4,7} +p_{1,2} &= 1 &
A_{1,7}\colon & & -p_{2,7}+ p_{1,7} +p_{1,3} &= 1\\
A_{3,6}\colon & & -p_{1,4} +p_{3,6}+p_{4,7} &= 0 &
A_{6,7}\colon & & -p_{4,7} +p_{6,7}+p_{1,7} &= 0\\
A_{1,5}\colon & & -p_{2,6}+p_{1,5}+p_{3,6} &= 0 &
A_{5,6}\colon & & -p_{4,6}+p_{5,6}+p_{6,7} &= 0\\
A_{2,7}\colon & & -p_{3,7}+p_{2,7}+ p_{1,5} & = -1 &
A_{2,5}\colon & & -p_{3,6}+p_{2,5}+ p_{5,6} & = 0\\
A_{4,6}\colon & & -p_{4,5}+p_{4,6}+ p_{2,7} &= -1 &
A_{2,4}\colon & & -p_{3,5}+p_{2,4}+ p_{2,5} &= -1\\
A_{3,5}\colon & & -p_{1,6}+p_{3,5}+ p_{4,6} &= 0 &
A_{3,4}\colon & & -p_{1,5}+p_{3,4}+ p_{2,4} &= 0\\
A_{1,4}\colon & & -p_{2,5}+p_{1,4}+p_{2,3} &= 0 &
A_{2,6}\colon & & -p_{3,4}+p_{2,6}+ p_{5,7} & = -1\\
A_{3,7}\colon & & -p_{1,7}+ p_{3,7} +p_{1,4} &= 0 &
A_{4,5}\colon & & -p_{5,6}+p_{4,5}+ p_{2,6} &= 0\\
A_{1,6}\colon & & -p_{2,4} +p_{1,6}+p_{3,7} &= 1 &
A_{2,3}\colon & & -p_{1,3}+p_{2,3}+ p_{4,5} &= 0\\
A_{5,7}\colon & & -p_{6,7}+p_{5,7}+p_{1,6} &= 0. & & & &
\end{aligned}\right.
\end{equation}
One may check that the systems~\req{sumxyz} and~\reqref{frob5} together admit a solution, taking for example all of the coefficients to be zero, with the exception of:
\begin{equation*}
\text{$p_{2,7}=p_{5,7}=-1$ and $p_{3,5}=p_{1,6}=1$.}
\end{equation*}
%
For these values of $p_{i,j}$, we have $N=A_{3,5}+A_{1,6}-A_{2,7}-A_{5,7}$, and it follows from above that the subgroup $\ang{x,v}$ of $B_7/[P_7,P_7]$ is isomorphic to $\mathcal{F}$, which completes the proof of the theorem.
\end{proof}

We now analyse the conjugacy classes of subgroups isomorphic to $\mathcal{F}$ in $B_{7}/[P_{7},P_{7}]$. We first show that $\sn[7]$ possesses a single such conjugacy class. 

\begin{prop}\label{prop:frob}
Any two subgroups of $\sn[7]$ isomorphic to $\mathcal{F}$ are conjugate.
\end{prop}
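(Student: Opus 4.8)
The plan is to show that a subgroup $G$ of $\sn[7]$ isomorphic to $\mathcal{F}$ is determined up to conjugacy by the pair consisting of a $7$-cycle $\alpha'$ generating the normal Sylow $7$-subgroup of $G$ and an order-$3$ element $\beta'$ satisfying $\beta'\alpha'\beta'^{-1}=\alpha'^{2}$, and then to count how many such pairs there are up to the action of $\sn[7]$ by conjugation. First I would fix $G\cong\mathcal{F}$ and invoke \relem{frob}: $G$ has a generating set $\{\alpha',\beta'\}$ where $\alpha'$ is a $7$-cycle, $\beta'$ has cycle type $(3,3,1)$, and $\beta'\alpha'\beta'^{-1}=\alpha'^{2}$. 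Since all $7$-cycles in $\sn[7]$ are conjugate, after conjugating $G$ we may assume $\alpha'=\alpha=(1,3,4,2,5,6,7)$, the $7$-cycle of \req{alphabeta}. Now $G$ is determined by the choice of $\beta'$, so it suffices to understand the set $T=\{\,g\in\sn[7] : g\alpha g^{-1}=\alpha^{2},\ g^{3}=1\,\}$ and to show that the subgroups $\langle\alpha,g\rangle$ for $g\in T$ form a single conjugacy class, with $\mathcal{F}_{0}=\langle\alpha,\beta\rangle$ among them.

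The key computation is with the normaliser $N=N_{\sn[7]}(\langle\alpha\rangle)$. Since $\langle\alpha\rangle$ is a Sylow $7$-subgroup, $N$ has order $7\cdot 6=42$ and is isomorphic to $\Z_{7}\rtimes\Z_{6}$, the full holomorph-type normaliser; conjugation gives an isomorphism $N/\langle\alpha\rangle\cong\aut{\langle\alpha\rangle}\cong\Z_{6}$. The condition $g\alpha g^{-1}=\alpha^{2}$ says precisely that the image of $g$ in $N/\langle\alpha\rangle$ is the automorphism $\alpha\mapsto\alpha^{2}$, which has order $3$ in $\Z_{6}$; hence $g$ lies in the unique coset of $\langle\alpha\rangle$ in $N$ mapping to that automorphism. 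That coset has $7$ elements, each of the form $\alpha^{k}\beta$ for $k=0,\ldots,6$ once we fix one solution $\beta$; the subgroup of $N$ of order $21$ they generate together with $\langle\alpha\rangle$ is $\langle\alpha,\beta\rangle=\mathcal{F}_{0}$, and each $\alpha^{k}\beta$ has order dividing $3$ (indeed exactly $3$, since the coset maps to a nontrivial automorphism, so no element of it lies in $\langle\alpha\rangle$). Thus $T$ is exactly the set of order-$3$ elements in that coset, all lying in $\mathcal{F}_{0}$, and for every $g\in T$ we have $\langle\alpha,g\rangle=\mathcal{F}_{0}$. This shows any $G$ with $\alpha'=\alpha$ equals $\mathcal{F}_{0}$, and together with the first paragraph it shows every subgroup isomorphic to $\mathcal{F}$ is conjugate to $\mathcal{F}_{0}$, proving the proposition.

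The main obstacle, such as it is, is justifying that $\langle\alpha\rangle$ is a Sylow $7$-subgroup of $G$ and hence that the generator $\alpha'$ in \relem{frob} is forced to be conjugate into a fixed $7$-cycle — this is immediate since $|\mathcal{F}|=21=3\cdot 7$ so the Sylow $7$-subgroup is the unique subgroup of order $7$, which is normal and generated by any $7$-cycle in $G$ — and then identifying the relevant coset of $\langle\alpha\rangle$ inside $N_{\sn[7]}(\langle\alpha\rangle)$ and checking that $\beta$ of \req{alphabeta} lies in it with $\beta\alpha\beta^{-1}=\alpha^{2}$, which is the same straightforward permutation computation already used in the proof of \relem{frob}. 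Everything else is elementary group theory about $\Z_{7}\rtimes\Z_{6}$, and no genuinely hard step arises; the only care needed is to make sure one does not conflate "$g$ has order $3$" with "$g$ lies in the order-$3$ subgroup", which the coset description handles cleanly.
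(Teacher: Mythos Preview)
Your proposal is correct and follows essentially the same approach as the paper: after conjugating so that $\alpha'=\alpha$, you show that any admissible $\beta'$ lies in the coset $\beta\langle\alpha\rangle$, whence $\langle\alpha,\beta'\rangle=\langle\alpha,\beta\rangle=\mathcal{F}_{0}$. The paper reaches this slightly more directly by observing that $\beta^{-1}\beta'$ centralises the full cycle $\alpha$ and hence lies in $\langle\alpha\rangle$, rather than routing through the normaliser $N_{\sn[7]}(\langle\alpha\rangle)\cong\Z_{7}\rtimes\Z_{6}$, but the content is the same.
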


\begin{proof}
Let $G$ be a subgroup of $\sn[7]$ isomorphic to $\mathcal{F}$. It suffices to show that $G$ is conjugate to $\mathcal{F}_{0}$. By \relem{frob}, $G$ is generated by two elements $\alpha'$ and $\beta'$, where $\alpha'$ is a $7$-cycle, the cycle type of $\beta'$ is $(3,3,1)$, and $\beta' \alpha  \beta'^{-1}=\alpha^2$. Conjugating $G$ if necessary, we may suppose that $\alpha'=\alpha$.
Now $\beta' \alpha  \beta'^{-1}=\alpha^2$ in $G$ and $\beta\alpha  \beta^{-1}=\alpha^2$ in $\mathcal{F}_{0}$, from which it follows that $\beta^{-1}\beta'$ belongs to the centraliser of $\alpha$. But since $\alpha$ is a complete cycle in $\sn[7]$, its centraliser is equal to $\ang{\alpha}$. So there exists $k\in \brak{0,1,\ldots,6}$ such that $\beta'=\beta \alpha^{k}$, and hence $G=\ang{\alpha,\beta'}=\ang{\alpha,\beta}=\mathcal{F}_{0}$ as required.
%
%
\end{proof}

%

\begin{rem}\label{rem:conjalpha}
For the purposes of the proof of \repr{frob1}, we shall study the elements of the form $\epsilon \alpha \epsilon^{-1}$, where $\epsilon$ belongs to the centraliser of $\beta$ in $\sn[7]$. This centraliser may be seen to be of order $18$, and consists of the elements of the form $\tau^{i}(123)^{j}$, where $\tau=(1,4,2,5,3,6)$, $0\leq i\leq 5$ and $0\leq j\leq 2$. Let: 
\begin{equation}\label{eq:alpha12}
\begin{cases}
\alpha_{1}&=(1,3,2) \alpha (1,3,2)^{-1}=(1,4,3,5,6,7,2), \quad\text{and}\\
\alpha_{2}&=(4,6,5) \alpha (4,6,5)^{-1} =(1,3,5,2,6,4,7).
\end{cases}
\end{equation}
A straightforward computation shows that:
\begin{equation*}
(1,2,3)^{j} \alpha (1,2,3)^{-j}= \begin{cases}
\alpha & \text{if $j=0$}\\
\alpha_{2}^{2} & \text{if $j=1$}\\
\alpha_{1} & \text{if $j=2$,}
\end{cases}
\end{equation*}
and $\tau \alpha \tau^{-1}=\alpha_{2}^{-2}$, $\tau \alpha_{2} \tau^{-1}=\alpha^{-1}$ and $\tau \alpha_{1} \tau^{-1}=\alpha_{1}^{3}$ for all $0\leq i\leq 5$ and $0\leq j\leq 2$. It then follows that for all $0\leq i\leq 5$ and $0\leq j\leq 2$, there exists $z\in \brak{\alpha,\alpha_{1}, \alpha_{2}}$ such that $\tau^{i}(123)^{j} \alpha (123)^{-j}\tau^{-i}$ is a generator of $\ang{z}$.
%
%
\end{rem}

\begin{prop}\label{prop:frob1}
Suppose that $H$ is a subgroup of $B_7/[P_7, P_7]$ isomorphic to $\mathcal{F}$. Then $H$ is conjugate to a subgroup of the form $\ang{x,v}$, where $x$ is given by \req{defxy} and $\overline{\sigma}(v)=\alpha$.
\end{prop}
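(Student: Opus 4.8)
## Proof proposal for Proposition~\ref{prop:frob1}

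The plan is to reduce the statement to a "rigidity" argument at the level of $\sn[7]$, using Lemma~\ref{lem:eleconj} and Remarks~\ref{rem:restrict} repeatedly so that everything upstairs in $B_7/[P_7,P_7]$ is controlled by the corresponding data downstairs. First I would let $H\leq B_7/[P_7,P_7]$ be isomorphic to $\mathcal F$. Since $H$ is a finite subgroup, Remark~\ref{rem:restrict}(\ref{it:restrictc}) tells us that $\overline{\sigma}|_H \colon H\to \overline{\sigma}(H)$ is an isomorphism, so $\overline{\sigma}(H)$ is a subgroup of $\sn[7]$ isomorphic to $\mathcal F$. By Proposition~\ref{prop:frob}, $\overline{\sigma}(H)$ is conjugate in $\sn[7]$ to $\mathcal F_0 = \ang{\alpha,\beta}$; lifting a conjugating permutation to any braid word and conjugating $H$ by its class in $B_7/[P_7,P_7]$, we may assume from the outset that $\overline{\sigma}(H)=\mathcal F_0$.

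Next I would produce generators of $H$ of the expected shape. Using the isomorphism $\overline{\sigma}|_H$, there is a unique $v\in H$ with $\overline{\sigma}(v)=\alpha$ and a unique $w\in H$ with $\overline{\sigma}(w)=\beta$; since $\{\alpha,\beta\}$ generates $\mathcal F_0$ and $\overline{\sigma}|_H$ is an isomorphism, $\{v,w\}$ generates $H$, and the relations $v^7=w^3=1$, $wvw^{-1}=v^2$ hold because they hold after applying the injection $\overline{\sigma}|_H$ and can be checked in $\mathcal F_0$. Now $w$ projects to $\beta = \overline{\sigma}(x)$, where $x$ is the explicit element of \req{defxy}; hence $w = M x$ for some $M\in P_7/[P_7,P_7]$. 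The goal is to conjugate $H$ by an element of $P_7/[P_7,P_7]$ so as to make $w$ equal to $x$, after which $H = \ang{x,v}$ with $\overline{\sigma}(v)=\alpha$, which is exactly the claimed form.

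The substance is therefore: given $w=Mx$ of order $3$ with $\overline{\sigma}(w)=\beta$, find $Y\in P_7/[P_7,P_7]$ with $Y w Y^{-1} = x$. Writing this out, $YMxY^{-1} = x$ is equivalent to $M = x Y x^{-1} \cdot Y^{-1}$ (using that $P_7/[P_7,P_7]$ is abelian and normal), i.e. $M$ must lie in the image of the endomorphism $Y\mapsto (\operatorname{Ad}_x - \id)(Y)$ of the free abelian group $P_7/[P_7,P_7]$, where $\operatorname{Ad}_x$ is conjugation by $x$. Here I would invoke the explicit orbit description of the $\operatorname{Ad}_x$-action given in \reqref{frob3} (from Propositions~\ref{prop:i}, \ref{prop:ii}, \ref{prop:iii}, \ref{prop:iv} with $x=\delta_{0,3}\delta_{3,3}$): the twenty-one basis vectors $A_{i,j}$ split into seven $\operatorname{Ad}_x$-orbits of length $3$. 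On each such orbit $\{u_1,u_2,u_3\}$ the operator $\operatorname{Ad}_x-\id$ acts with image exactly the rank-$2$ sublattice $\{\sum c_i u_i : \sum c_i = 0\}$ (the kernel being the diagonal). So the equation $Yw Y^{-1}=x$ is solvable for $Y$ if and only if, on each of these seven orbits, the sum of the coordinates of $M$ in the basis $\{A_{i,j}\}$ vanishes. This is precisely the system \reqref{sumxyz} (with the roles of $x$ and $y$ interchanged, the orbits now being those of $x$), and the fact that $w$ has order $3$ — argued exactly as in the proof of \repr{Torcaonimpar}, since $(Mx)^3 = M + xMx^{-1} + x^2Mx^{-2}$ in the additive group $P_7/[P_7,P_7]$ — forces exactly these orbit-sum equations. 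Hence a solution $Y$ exists, and conjugating $H$ by $Y$ brings $w$ to $x$, so $H$ becomes $\ang{x, YvY^{-1}}$ with $\overline{\sigma}(YvY^{-1})=\alpha$, as required.

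The main obstacle, and the one step I would need to carry out carefully rather than quote, is the verification that the order-$3$ condition on $w=Mx$ translates exactly into the orbit-sum conditions that make $M$ lie in $\operatorname{im}(\operatorname{Ad}_x-\id)$ — in other words, that the obstruction to conjugating $w$ into $x$ within $P_7/[P_7,P_7]$ vanishes automatically once $w$ is torsion. This is a linear-algebra computation over $\Z$ using the explicit orbit table \reqref{frob3}, entirely parallel to the computations already done in the proof of \reth{frob} and \repr{adelta}, so it is routine in nature but is where all the real content lies; the rest of the argument is formal manipulation with $\overline{\sigma}$ and Proposition~\ref{prop:frob}.
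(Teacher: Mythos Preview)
Your argument is correct, and it takes a genuinely different route from the paper's.

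The paper proceeds by first invoking \reth{classconj} to conjugate the order-$3$ generator $\widetilde{x}$ of $H$ to $x$ by some $\lambda_{1}\in B_7/[P_7,P_7]$. Since \reth{classconj} as stated gives no control over $\overline{\sigma}(\lambda_{1})$ beyond the fact that it centralises $\beta$, the permutation of the order-$7$ generator may be thrown off: $\overline{\sigma}(\lambda_{1}\widetilde{y}\lambda_{1}^{-1})$ is only known to lie in $\ang{\alpha}$, $\ang{\alpha_{1}}$ or $\ang{\alpha_{2}}$. The paper then uses the centraliser analysis of \rerem{conjalpha} together with an explicit correcting element $\lambda_{2}\in\{e,\sigma_{1}\sigma_{2}^{-1},\sigma_{4}\sigma_{5}^{-1}\}$ commuting with $x$ to bring this permutation back into $\ang{\alpha}$.

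Your approach bypasses this detour entirely: you observe that because $\overline{\sigma}(w)=\overline{\sigma}(x)$, the conjugation of $w$ to $x$ can be sought directly inside $P_7/[P_7,P_7]$, and you show that the obstruction (the orbit sums of $M$ under $\operatorname{Ad}_x$) vanishes precisely because $w$ has order~$3$. This is exactly the mechanism used in the \emph{proof} of \reth{classconj} (where the conjugator $X$ is in fact constructed in $P_n/[P_n,P_n]$), so you are essentially re-deriving that hidden feature in the case $\delta=\delta_{0,3}\delta_{3,3}$; but since the statement of \reth{classconj} does not record that the conjugator can be taken pure, you are right that this computation needs to be made explicit. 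What your approach buys is that conjugation by a pure element preserves $\overline{\sigma}(v)=\alpha$ automatically, so \rerem{conjalpha} and the $\lambda_{2}$-fix become unnecessary. What the paper's approach buys is that it quotes \reth{classconj} off the shelf rather than reopening the linear-algebra argument.
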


\begin{proof}  
Let $H$ be a subgroup of $B_7/[P_7, P_7]$ isomorphic to $\mathcal{F}$. Since $\overline{\sigma}(H)$ is a subgroup of $\sn[7]$ isomorphic to $\mathcal{F}$ by \relem{eleconj}, it follows from \repr{frob} that there exists $\rho\in \sn[7]$ such that $\mathcal{F}_0=\rho \overline{\sigma}(H)\rho^{-1}$. So if $\widehat{\rho}\in B_7/[P_7, P_7]$ is such that $\overline{\sigma}(\widehat{\rho})=\rho$ then $H_1=\widehat{\rho} H \widehat{\rho}^{-1}$ satisfies $\overline{\sigma}(H_{1})=\mathcal{F}_0$. Let $\widetilde{x},\widetilde{y} \in H_{1}$ be such that $\overline{\sigma}(\widetilde{x})=\beta$ and $\overline{\sigma}(\widetilde{y})=\alpha$, where $\alpha$ and $\beta$ are given by \req{alphabeta}. Now $\beta=\overline{\sigma}(x)$, and since $x$ and $\widetilde{x}$ are of order $3$ and have the same permutation, \reth{classconj} implies that they are conjugate. So there exists $\lambda_{1} \in B_7/[P_7, P_7]$ such that $\lambda_{1} \widetilde{x} \lambda_{1}^{-1}=x$. Hence $\overline{\sigma}(\lambda_{1}) \overline{\sigma}(\widetilde{x}) \overline{\sigma}(\lambda_{1})^{-1}=\overline{\sigma}(x)$, and since $\overline{\sigma}(\widetilde{x})=\overline{\sigma}(x)=\beta$, we conclude that $\overline{\sigma}(\lambda_{1})$ belongs to the centraliser of $\beta$ in $\sn[7]$. 
By \rerem{conjalpha}, this centraliser is equal to $\ang{\tau,(1,2,3)}$, and the fact that $\overline{\sigma}(\widetilde{y})=\alpha$ implies that there exists $z\in \brak{\alpha,\alpha_{1}, \alpha_{2}}$ such that $\overline{\sigma}(\lambda_{1}\widetilde{y} \lambda_{1}^{-1})$ is a generator of $\ang{z}$.
Let:
\begin{equation*}
\lambda_{2}=\begin{cases}
e & \text{if $z=\alpha$}\\
\sigma_{1}\sigma_{2}^{-1} & \text{if $z=\alpha_{1}$}\\
\sigma_{4}\sigma_{5}^{-1} & \text{if $z=\alpha_{2}$.}
\end{cases}
\end{equation*}
Note that $\lambda_{2}$ commutes with $x$, and by \req{alpha12}, $\overline{\sigma}(\lambda_{2}^{-1} \lambda_{1}\widetilde{y} \lambda_{1}^{-1}\lambda_{2})$ is a generator of $\ang{\alpha}$. Taking $v$ to be the element of $\lambda_{2}^{-1}\lambda_{1}\ang{\widetilde{y}} \lambda_{1}^{-1}\lambda_{2}$ for which $\overline{\sigma}(v)=\alpha$, the subgroup $\lambda_{2}^{-1}\lambda_{1}\widehat{\rho} H (\lambda_{2}^{-1}\lambda_{1}\widehat{\rho})^{-1}$ is then seen to be equal to $\ang{x,v}$, which proves the proposition.
\end{proof}

\begin{thm}\label{th:uniqueconj}
The group $B_{7}/[P_{7},P_{7}]$ possesses a unique conjugacy class of subgroups isomorphic to $\mathcal{F}$.
\end{thm}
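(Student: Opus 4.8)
The plan is to recast the statement as a cohomology-vanishing result. Put $M=P_7/[P_7,P_7]\cong\Z^{21}$ and $E=\overline{\sigma}^{-1}(\mathcal{F}_0)$, so that the short exact sequence~\reqref{sespnquot} restricts to $1\to M\to E\stackrel{\overline{\sigma}}{\to}\mathcal{F}_0\to 1$; this sequence splits, since the subgroup $\ang{x,v}$ produced in the proof of \reth{frob} has $\overline{\sigma}$-image $\ang{\beta,\alpha}=\mathcal{F}_0$ and order $21$, hence is a complement of $M$ in $E$. The conjugation action of $E$ on the abelian group $M$ factors through $\mathcal{F}_0$, and by~\reqref{conjugAij2} it merely permutes the basis $\brak{A_{i,j}}_{1\le i<j\le 7}$. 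I would first observe that this action of $\mathcal{F}_0=\ang{\alpha,\beta}$ on the $21$ basis elements is \emph{transitive}: this is visible from the orbit tables~\reqref{frob4} and~\reqref{frob3} already computed for \reth{frob}, because the $\beta$-orbit $\brak{A_{1,2},A_{1,3},A_{2,3}}$ meets each of the three $\alpha$-orbits of~\reqref{frob4}, so $\ang{\alpha,\beta}$ fuses all three of them into a single orbit. Since $\ord{\mathcal{F}_0}=21$ equals the number of basis elements, the action is in fact regular, and therefore $M$ is isomorphic, as a $\Z\mathcal{F}_0$-module, to the regular module $\Z[\mathcal{F}_0]$. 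Consequently $H^{1}(\mathcal{F}_0;M)=H^{1}(\mathcal{F}_0;\Z[\mathcal{F}_0])=0$ by Shapiro's Lemma, the regular module being induced from the trivial subgroup.

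The remaining steps are formal. Because the sequence $1\to M\to E\to\mathcal{F}_0\to 1$ splits and $H^{1}(\mathcal{F}_0;M)=0$, the complements of $M$ in $E$ form a single $M$-conjugacy class (complements up to $M$-conjugacy being classified by $H^{1}(\mathcal{F}_0;M)$); in particular, any two of them are conjugate in $B_7/[P_7,P_7]$. It then suffices to show that an arbitrary subgroup $H$ of $B_7/[P_7,P_7]$ isomorphic to $\mathcal{F}$ is conjugate to a complement of $M$ in $E$. By \relem{eleconj}, $\overline{\sigma}$ maps $H$ isomorphically onto $\overline{\sigma}(H)$, so $\overline{\sigma}(H)\cong\mathcal{F}$, and by \repr{frob} there is $n\in\sn[7]$ with $n\,\overline{\sigma}(H)\,n^{-1}=\mathcal{F}_0$. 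Choosing $\widehat{n}\in B_7/[P_7,P_7]$ with $\overline{\sigma}(\widehat{n})=n$ and replacing $H$ by $\widehat{n}H\widehat{n}^{-1}$, we may assume $\overline{\sigma}(H)=\mathcal{F}_0$. Then $H\cap M=\ker{\overline{\sigma}|_{H}}=\brak{1}$, and $HM=\overline{\sigma}^{-1}(\mathcal{F}_0)=E$ because $HM$ contains $M=\ker\overline{\sigma}$ and has image $\mathcal{F}_0$; thus $H$ is a complement of $M$ in $E$. Together with the existence assertion of \reth{frob}, this yields exactly one conjugacy class of subgroups isomorphic to $\mathcal{F}$ in $B_7/[P_7,P_7]$.

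The only genuine verification is the identification of $M$ with the regular $\Z\mathcal{F}_0$-module, i.e.\ the transitivity of the permutation action on the basis; given the orbit computations already made in the proof of \reth{frob}, this is immediate, so the whole argument is short, and the conceptual point is simply recognising that the problem reduces to the vanishing of $H^{1}(\mathcal{F}_0;M)$. If one prefers to avoid invoking group cohomology, this vanishing can be replaced by the equivalent assertion that every derivation $\mathcal{F}_0\to M$ is principal, which one proves by the same bookkeeping of orbit sums that underlies systems~\reqref{sumxyz} and~\reqref{frob5}; alternatively, one may use \repr{frob1} to reduce to subgroups of the form $\ang{x,v}$ and then solve the resulting linear system for an explicit conjugator in $P_7/[P_7,P_7]$ carrying $\ang{x,v}$ to the reference subgroup. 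I would nevertheless favour the cohomological formulation, as it is the most transparent and makes the role of the regular representation evident.
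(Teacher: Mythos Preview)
Your argument is correct and takes a genuinely different route from the paper's own proof. The paper proceeds by explicit linear algebra: it first uses \repr{frob1} to reduce an arbitrary subgroup isomorphic to $\mathcal{F}$ to one of the form $\ang{x,v}$ with $\overline{\sigma}(v)=\alpha$, writes $v=Ny$ with $N\in P_{7}/[P_{7},P_{7}]$, parametrises all such $N$ as a six-parameter family via the systems~\reqref{sumxyz} and~\reqref{frob5}, and then explicitly solves for an element $\Theta\in P_{7}/[P_{7},P_{7}]$ satisfying $\Theta x\Theta^{-1}=x$ and $\Theta v_{0}\Theta^{-1}=v$, where $v_{0}$ is the distinguished solution from \reth{frob}. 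By contrast, you observe directly from the orbit tables~\reqref{frob4} and~\reqref{frob3} that the $\mathcal{F}_{0}$-action on the basis $\brak{A_{i,j}}_{1\leq i<j\leq 7}$ is regular, so $M\cong\Z[\mathcal{F}_{0}]$ as a module, whence $H^{1}(\mathcal{F}_{0};M)=0$ by Shapiro's Lemma, and the uniqueness of complements up to $M$-conjugacy follows formally. Your approach is shorter and more conceptual, bypasses \repr{frob1} entirely, and isolates the structural reason for the result (regularity of the action); the paper's approach is fully elementary, remains within the linear-algebraic framework of the surrounding sections, and produces an explicit conjugating element.
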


\begin{proof}
From the proof of \reth{frob}, $B_{7}/[P_{7},P_{7}]$ possesses a subgroup $H_{0}=\ang{x,v_{0}}$ isomorphic to $\mathcal{F}$, where $v_{0}=N_{0}y$, and $N_{0}=A_{1,6} A_{3,5} A_{2,7}^{-1} A_{5,7}^{-1}$. Let $H$ be a subgroup of $B_{7}/[P_{7},P_{7}]$ isomorphic to $\mathcal{F}$. By \repr{frob1}, up to conjugacy, we may suppose that $H=\ang{x,v}$, where $\overline{\sigma}(v)=\alpha =\overline{\sigma}(y)= \overline{\sigma}(v_{0})$. Thus $v=Ny$, where $N\in P_{7}/[P_{7},P_{7}]$. Again from the proof of \reth{frob}, the coefficients $p_{i,j}$ of $N$ given by \req{defN} satisfy the systems of equations~\reqref{sumxyz} and~\reqref{frob5}, and one may check that the general solution of these two systems is of rank $6$, and is given by:
\begin{equation}\label{eq:solutionN}
\left\{
\begin{aligned}
p_{1,2} &= -r_{6} +r_{4}+r_{3}-r_{2}+1 & 
p_{1,3} &= -r_{6}-r_{2}\\
p_{4,7} &= r_{6}-r_{3}+r_{2}&
p_{1,7} &= r_{6} -r_{5}-r_{4} -r_{3}+r_{2}\\
p_{3,6} &= r_{3}-r_{2}&
p_{6,7} &= r_{5}+r_{4}\\
p_{1,5} &= r_{2}&
p_{5,6} &= -r_{6} +r_{3}-r_{2}-r_{1}\\
p_{2,7} &= -r_{5}-r_{4}-r_{3}-1&
p_{2,5} &= r_{6}+r_{1}\\
p_{4,6} &= -r_{6} +r_{5} +r_{4} +r_{3}-r_{2}-r_{1}&
p_{2,4} &= -r_{4}-r_{3}+r_{2}-1\\
p_{3,5} &= r_{6} -r_{4} -r_{3}+r_{2}+r_{1}&
p_{3,4} &= r_{4}+r_{3}+1\\
p_{1,4} &= r_{6}& 
p_{2,6} &= r_{3}\\
p_{3,7} &= -r_{5} -r_{4} -r_{3}+r_{2}& 
p_{4,5} &= -r_{6}-r_{2}-r_{1}\\
p_{1,6} &= r_{5}& p_{2,3} &= r_{1}\\
p_{5,7} &= r_{4},& &
\end{aligned}\right.
\end{equation}
where $r_{1},\ldots,r_{6}\in \Z$ are arbitrary. So choose the values of the $r_{l}$ so that $v=Ny$. We claim that there exists $\Theta\in P_{7}/[P_{7},P_{7}]$ such that:
\begin{align}
& \Theta x\Theta^{-1}=x, \quad \text{and}\label{eq:conjfrob1}\\
& \Theta v_{0} \Theta^{-1}=v.\label{eq:conjfrob2}
\end{align}
This being the case, we have $H=\ang{x,v} = \Theta \ang{x,v_{0}} \Theta^{-1}=\Theta H_{0} \Theta^{-1}$, in particular $H$ and $H_{0}$ are conjugate in $B_{7}/[P_{7},P_{7}]$, which proves the statement of the theorem. To prove the claim, let $\displaystyle \Theta=\sum_{1\leq i<j\leq 7} \theta_{i,j}A_{i,j}$. We must determine the coefficients $\theta_{i,j}$ of $\Theta$ that satisfy equations~\reqref{conjfrob1} and~\reqref{conjfrob2}. By \req{frob3}, \req{conjfrob1} holds if and only if there exist $s_{1},\ldots,s_{7}\in \Z$ such that:
\begin{equation}\label{eq:definesk}
\left\{
\begin{aligned}
s_{1}&=\theta_{1,2}=\theta_{1,3}=\theta_{2,3} & 
s_{2}&= \theta_{2,7}=\theta_{1,7}=\theta_{3,7}\\
s_{3}&=\theta_{3,6}=\theta_{2,5}= \theta_{1,4} &
s_{4}&=\theta_{3,5}=\theta_{2,4}=\theta_{1,6}\\
s_{5}&=\theta_{4,6}=\theta_{5,6}=\theta_{4,5} &
s_{6}&=\theta_{4,7}=\theta_{6,7}=\theta_{5,7}\\
s_{7}&=\theta_{1,5}=\theta_{3,4}=\theta_{2,6}.& &
\end{aligned}\right.
\end{equation}
Equation~\reqref{conjfrob2} may be written in the form $\Theta \ldotp N_{0} \ldotp y \Theta^{-1} y^{-1}=N$. Using \req{frob3}, we obtain the following system of equations:
\begin{equation}\label{eq:syspthetas}
\left\{
\begin{aligned}
p_{1,2} &= \theta_{1,2}-\theta_{3,5}=s_{1}-s_{4}&
p_{1,3} &= \theta_{1,3}-\theta_{2,3}=s_{1}-s_{7}\\
p_{4,7} &= \theta_{4,7}-\theta_{1,2}=s_{6}-s_{1}&
p_{1,7} &= \theta_{1,7}-\theta_{1,3}=s_{2}-s_{1}\\
p_{3,6} &= \theta_{3,6}-\theta_{4,7}=s_{3}-s_{6}&
p_{6,7} &= \theta_{6,7}-\theta_{1,7}=s_{6}-s_{2}\\
p_{1,5} &= \theta_{1,5}-\theta_{3,6}=s_{7}-s_{3}&
p_{5,6} &= \theta_{5,6}-\theta_{6,7}=s_{5}-s_{6}\\
p_{2,7} &= \theta_{2,7}-\theta_{1,5}-1=s_{2}-s_{7}-1&
p_{2,5} &= \theta_{2,5}-\theta_{5,6}=s_{3}-s_{5}\\
p_{4,6} &= \theta_{4,6}-\theta_{2,7}=s_{5}-s_{2}&
p_{2,4} &= \theta_{2,4}-\theta_{2,5}=s_{4}-s_{3}\\
p_{3,5} &= \theta_{3,5}-\theta_{4,6}+1=s_{4}-s_{5}+1&
p_{3,4} &= \theta_{3,4}-\theta_{2,4}=s_{7}-s_{4}\\
p_{1,4} &= \theta_{1,4}-\theta_{3,5}=s_{3}-s_{1}&
p_{2,6} &= \theta_{2,6}-\theta_{5,7}=s_{7}-s_{6}\\
p_{3,7} &= \theta_{3,7}-\theta_{1,4}=s_{2}-s_{3}&
p_{4,5} &= \theta_{4,5}-\theta_{2,6}=s_{5}-s_{7}\\
p_{1,6} &= \theta_{1,6}-\theta_{3,7}+1=s_{4}-s_{2}+1&
p_{2,3} &= \theta_{2,3}-\theta_{4,5}=s_{1}-s_{5}\\
p_{5,7} &= \theta_{5,7}-\theta_{1,6}-1=s_{6}-s_{4}-1. &&
\end{aligned}\right.
\end{equation}
It remains to show that by choosing the $s_{k}$ appropriately, we obtain a system of coefficients that satisfy the equations of system~\reqref{syspthetas}. Consider the system:
\begin{equation}\label{eq:basicS}
\left\{\begin{aligned}
s_{1}-s_{4}&= -r_{6} +r_{4}+r_{3}-r_{2}+1\\
s_{6}-s_{1}&=r_{6}-r_{3}+r_{2}\\
s_{3}-s_{6}&=r_{3}-r_{2}\\
s_{7}-s_{3}&=r_{2}\\
s_{2}-s_{7}&=-r_{5}-r_{4}-r_{3}\\
s_{5}-s_{2}&=-r_{6} +r_{5} +r_{4} +r_{3}-r_{2}-r_{1}.
\end{aligned}\right.
\end{equation}
 This system clearly possesses solutions in the $s_{k}$ in terms of the $r_{l}$, obtained for example by taking $s_{4}$ to be an arbitrary integer, and by rewriting the other $s_{k}$ in terms of $s_{4}$ and the $r_{l}$. For such a solution, the first six equations of the first column of~\reqref{syspthetas} are satisfied using \req{solutionN}. Using just~\reqref{solutionN} and~\reqref{basicS}, we now verify the remaining equations of~\reqref{syspthetas}. For example:
\begin{align*}
s_{4}-s_{5}+1=& -((s_{1}-s_{4})+(s_{6}-s_{1})+(s_{3}-s_{6})+ (s_{7}-s_{3})+ (s_{2}-s_{7})+(s_{5}-s_{2}))+1\\
=& -((-r_{6} +r_{4}+r_{3}-r_{2}+1)+(r_{6}-r_{3}+r_{2})+(r_{3}-r_{2})+r_{2}+\\
&(-r_{5}-r_{4}-r_{3})+ (-r_{6} +r_{5} +r_{4} +r_{3}-r_{2}-r_{1}))+1\\
=&r_{6}-r_{4}-r_{3}+r_{2}+r_{1}=p_{3,5}.
\end{align*}
In a similar manner, one may check that the right-hand side of each of the equations of the system~\reqref{syspthetas} is equal to the left-hand side, using first~\reqref{basicS} to express the $s_{k}$ in terms of the $r_{l}$, and then using~\reqref{solutionN} to obtain the corresponding $p_{i,j}$. The straightforward computations are left to the reader.
So with this choice of $s_{k}$, we obtain values of the $\theta_{i,j}$ using \req{definesk} for which equations~\reqref{conjfrob1} and~\reqref{conjfrob2} are satisfied. Conversely, given arbitrary $r_{1},\ldots,r_{6}\in \Z$ and $s_{1},\ldots,s_{6}$ satisfying \req{basicS}, we see that if the $p_{i,j}$ are given by \req{syspthetas} and the $\theta_{i,j}$ are given by \req{definesk} then equations~\reqref{conjfrob1} and~\reqref{conjfrob2} are satisfied, and this completes the proof of the theorem. 
\end{proof}



\begin{rem}
We saw in \reth{frob} that the Frobenius group $\mathcal{F}$ embeds in $B_7/[P_7, P_7]$. It is the only finite non-Abelian subgroup of $\sn[7]$ of odd order . To see this, besides $3\times 7$, which is the order of $\mathcal{F}$, the possible orders of non-Abelian subgroups of odd order of $\sn[7]$ are $3^2\times 5$, $3^2\times 5$, $3^2\times 7$, $3\times 5\times 7$ and $3^2\times 5\times 7$. Further, if $H$ is a subgroup of $\sn$ of odd order then it is necessarily a subgroup of $\an$. Indeed, any element $h\in H$ may be decomposed  as a product of disjoint cycles each of which is of odd length, and so it follows that $h\in \an$. From the table of maximal subgroups of $\an[7]$ given in~\cite[page 10]{atlas}, we see that $\sn[7]$ has no subgroup of order $3^2\times 5$, $3^2\times 7$, $3\times 5\times 7$ or $3^2\times 5\times 7$, and that if $\sn[7]$ possesses a subgroup $K$ of order $3^2\times 5$ then $K$ is a subgroup of $\an[6]$. It follows from the corresponding table for $\an[6]$ that there is no such subgroup  (see \cite[page 4]{atlas}).
\end{rem}


\end{document}